\newcommand{\norme}[1]{\left\Vert #1\right\Vert}
\newcommand{\E}{\mathbb{E}}
\newcommand{\IE}{\mathbb{E}}
\newcommand{\ts}{\thinspace}
\newcommand{\R}{\mathbb{R}}
\newcommand{\N}{\mathbb{N}}
\newtheorem{theo}{Theorem}[section]
\newtheorem{cor}[theo]{Corollary}
\newtheorem{propo}[theo]{Proposition}
\newtheorem{lemma}[theo]{Lemma}
\newtheorem{defi}[theo]{Definition}
\newtheorem{remark}[theo]{Remark}
\newtheorem{ass}{Assumption}
\begin{document}

\title{
Preconditioning for the high-order sampling of the invariant distribution of parabolic semilinear SPDEs
}

\author{Charles-Edouard~Br\'ehier}
\address{Universite de Pau et des Pays de l’Adour, E2S UPPA, CNRS, LMAP, Pau, France
}
\email{Charles-Edouard.Brehier@univ-pau.fr 
}

\author{Adrien Busnot Laurent}
\address{Univ Rennes, INRIA (MINGuS), IRMAR - UMR 6625, ENS Rennes, France
}
\email{Adrien.Busnot-Laurent@inria.fr}

\author{Arnaud Debussche}
\address{Univ Rennes, CNRS, IRMAR - UMR 6625, ENS Rennes, France
}
\email{Arnaud.Debussche@ens-rennes.fr}

\author{Gilles Vilmart}
\address{Section of Mathematics, University of Geneva, Switzerland
}
\email{Gilles.Vilmart@unige.ch}

\date{\today}

\begin{abstract}
For a class of ergodic parabolic semilinear stochastic partial differential equations (SPDEs) with gradient structure,
we introduce a preconditioning technique and design high-order integrators for the approximation of the invariant distribution.
The preconditioning yields improved temporal regularity of the dynamics while preserving the invariant distribution and allows the application of postprocessed integrators.
For the semilinear heat equation driven by space-time white noise in dimension $1$, we obtain new temporal integrators with orders $1$ and $2$ for sampling the invariant distribution with a minor overcost compared to the standard semilinear implicit Euler method of order $1/2$.
Numerical experiments confirm the theoretical findings and illustrate the efficiency of the approach.

\medskip

\noindent
\textit{Keywords:} stochastic partial differential equations, invariant distribution, ergodicity, Gibbs distribution, space-time white noise, postprocessed integrator.

\medskip

\noindent
\textit{AMS subject classification (2020):}
60H35, 60H15, 37M25.
\end{abstract}

\maketitle

\section{Introduction}

The goal of the present paper is to estimate numerically integrals in infinite dimension of the form
\[
\int_H \varphi d\mu_\star=\underset{t\to\infty}\lim~\E\bigl[\varphi(X(t))\bigr],
\]
where the probability distribution $\mu_\star$ is defined on a separable Hilbert space $H$, $\varphi:H\to\R$ is a function with sufficient regularity, and $X=\bigl(X(t)\bigr)_{t\ge 0}$ is an ergodic $H$-valued Markov process with unique invariant distribution $\mu_\star$. Precisely, we consider stochastic evolution equations of the form
\begin{equation}\label{eq:semilin}
dX(t)=AX(t)dt+F(X(t))dt+dW(t),
\end{equation}
where $\bigl(W(t)\bigr)_{t\ge 0}$ is a cylindrical Wiener process, and the nonlinear term $F:H\to H$ is a Lipschitz continuous function on $H$. We assume a gradient structure: one has
\[
F(x)=-DV(x),\quad\forall~x\in H,
\]
for a given potential $V:H\to \R$. The linear operator $A$ is assumed to be self-adjoint, negative, and to be such that $Q=(-A)^{-1}$ is a trace-class operator.
The class of problems \eqref{eq:semilin} encompasses in particular parabolic semilinear stochastic partial differential equations (SPDEs),
\begin{equation} \label{eq:heat1d}
\frac{\partial x(t,z)}{\partial t}=\frac{\partial^2 x(t,z)}{\partial z^2}+f\bigl(x(t,z)\bigr)+\dot{W}(t,z),\quad t>0,~z\in(0,1),
\end{equation}
driven by space-time white noise, with homogeneous Dirichlet boundary conditions, and for a smooth and Lipschitz nonlinearity $f:\R\rightarrow \R$. In this case, one has $H=L^2(0,1)$, and for all $x\in H$, $V(x)=-\int_{0}^{1}U(x(z))dz$, where $U'=f$.
In the absence of nonlinearity $F=0$, the probability distribution $\mu_\star$ coincides with the Gaussian distribution $\nu=\mathcal{N}(0,\frac12 Q)$ with mean zero and covariance operator $\frac12 Q$. Under technical assumptions, the invariant distribution $\mu_\star$ is absolutely continuous with respect to the Gaussian distribution $\nu$ with a Gibbs density,
\begin{equation} \label{eq:def_mu_star}
d\mu_\star(x)=Z^{-1}\exp\bigl(-2V(x)\bigr)d\nu(x),
\end{equation}
where $Z=\int_H e^{-2V(x)}d\nu(x)$ is the normalization constant.
The aim of this paper is to design numerical integrators for sampling the invariant distribution $\mu_\star$ with high order of convergence with respect to the discretisation timestep in spite of the low regularity of the solution of the dynamics.

Consider an integrator given by a discrete-time Markov process
\begin{equation}
\label{equation:one_step_integrator}
Y_{n+1}^{\Delta t}=\psi^{\Delta t}(Y_n^{\Delta t}),
\end{equation}
where $\psi^{\Delta t}:H\to H$ is a random mapping depending on the time-step size $\Delta t$.
Assuming ergodicity of the Markov process $Y_n^{\Delta t}$ with the invariant distribution $\mu^{\Delta t}$, where we recall that $\mu^{\Delta t}\neq \mu_\star$ in general, we define the error for the invariant distribution
\[
\epsilon(\varphi,\Delta t)=\int_H \varphi d\mu^{\Delta t}-\int_H \varphi d\mu_\star,
\]
for sufficiently smooth test functions $\varphi:H\to\R$.
Note that a spatial discretization is also required in practice, however we mostly focus on the temporal discretization error in this article.
The discretization is said to be of order $q$ in time for the approximation of the invariant distribution, if for any sufficiently smooth function $\varphi:H\to\R$, there exists $C(\varphi)\in(0,\infty)$ such that for any time-step size $\Delta t$, one has
\[
|\epsilon(\varphi,\Delta t)|\le C(\varphi)\Delta t^q.
\]
Recall that the discretization is said to be of weak order $p$ if for any time $T\in(0,\infty)$ and any sufficiently smooth function $\varphi:H\to\R$, there exists $C(T,\varphi)\in(0,\infty)$ such that for any time-step size $\Delta t=T/N$ where $N\in\N$, one has
\[
\big|\E[\varphi(Y_N^{\Delta t})]-\E[\varphi(X(N\Delta t))]\big|\le C(T,\varphi)\Delta t^{p}.
\]
Under mild assumptions, the order $q$ for the invariant distribution is larger than the weak order $p$, i.e. $q\geq p$. Indeed, in the weak error estimates above, one is able to show that $C(T,\varphi)$ may be chosen independent of $T$.

The main contribution of this article is to propose and analyse integrators of order $q=1$ and $q=2$ for the approximation of the invariant distribution. This is a non-trivial result as for parabolic semilinear SPDEs, the weak order for the linear implicit Euler scheme is $p=\frac12^-$ (which is consistent with the $\frac14^-$ H\"older regularity of the trajectories). We refer for instance to~\cite{Andersson_Kruse_Larsson:16, Brehier:17, Brehier_Debussche:18,Brehier-Goudenege:20,Brehier_Hairer-Stuart:18,Conus_Jentzen_Kurniawan:19, Debussche:11, Debussche_Printems:09} for works on weak convergence of numerical schemes applied to semilinear parabolic SPDEs. Concerning the numerical approximation of the invariant distributions of such SPDEs, preliminary works are~\cite{Brehier:14, Brehier_Kopec:17}, where uniform in time weak error estimates have been obtained, for temporal and full discretization respectively. We also refer to more recent works~\cite{Brehier:22,Chen-Gan-Wang:20,Cui-Hong-Sun:21,Jiang-Wang,Liu:25,Wang-Cao} for similar results, in particular for equations with non-globally Lipschitz continuous nonlinearities. The article~\cite{Chen-Dang-Hong-Zhou:23} provides central limit theorems instead of weak error estimates. The recent work~\cite{Brehier:25focm} deals with the same type of Gibbs invariant distributions as in this manuscript, with the introduction and analysis of a modified Euler scheme which preserves regularity of trajectories.

Finally, the question of the numerical approximation of invariant distributions has been studied for other types of SPDEs: Schr\"odinger equations~\cite{Chen-Hong-Wang:17,Hong-Wang:19,Hong-Wang-Zhang:17}, Maxwell equations~\cite{Chen-Hong-Ji-Liang:25,Chen-Hong-Liang}, damped wave equations~\cite{Lei-Brehier-Gan:25,Cai-Chen-Hong-Zhou}, Burgers equations~\cite{Boyaval-Martel-Reygner:22}, Navier--Stokes equations~\cite{GlattHoltz-Mondaini:25,Wu-Wang-Huang:23}, Burgers--Huxley equations~\cite{Wang-Wu-Huand-Zheng:25,Wang-Zhang-Cao:25}, Cahn--Hilliard equations~\cite{Deng-Wang-Cao}.

To the best of our knowledge, no higher-order method with a rigorous analysis is known in the literature for semi-linear SPDEs of the form \eqref{eq:semilin}. We mention the first attempt~\cite{Brehier_Vilmart:16} on the construction of higher-order methods for the approximation of the invariant distribution, where new schemes were designed. Numerical evidence suggested that one may achieve order $q\ge 1$, and in some cases, order $q=\frac32^-$ has been proved in the simplified case of a linear function $F$.

The construction of the higher-order methods presented in this article is based on a preconditioning technique. Applying a suitable preconditioning operator $\mathcal{P}$ allows us to modify the dynamics~\eqref{eq:semilin} without altering the invariant distribution \eqref{eq:def_mu_star}. In general, one obtains the preconditioned dynamics,
\[
dY^{\mathcal{P}}(t)=\mathcal{P}\Bigl(AY^{\mathcal{P}}(t)+F(Y^{\mathcal{P}}(t))\Bigr)dt+\mathcal{P}^{\frac12}dW(t).
\]
In this paper, we shall focus on the choice $\mathcal{P}=(-A)^{-1}=Q$, which appears already in \cite{Cotter_Roberts_Stuart_White:13, ReyBellet_Spiliopoulos_2016, Hairer_Stuart_Vollmer:14} in the context of Markov Chain Monte-Carlo (MCMC) methods
or for the
sampling of conditioned diffusions using SPDE techniques \cite{Hairer_Stuart_Voss:09,Hairer-Stuart-Voss:07,Hairer-Stuart-Voss:05}. There has recently been a growing interest for preconditioning techniques for finite dimensional stochastic systems, in the context of high-dimensional Langevin dynamics in molecular dynamics
\cite{Best_Hummer_2010, Hummer2005PositiondependentDC} and in particular
for overdamped Langevin dynamics, see \cite{ReyBellet_Spiliopoulos_2016, Roberts_Stramer_2002} and recently \cite{Bronasco25els, Cui_Tong_Zahm_2024,Lelievre_Pavliotis_Robin_Santet_Stoltz_2024,Lelievre_Santet_Stoltz_2024,Lelievre_Santet_Stoltz_2024B}, where preconditioning permits to enhance the convergence to equilibrium in the transient phase.
Under mild assumptions, the stochastic evolution equation
\[
dY(t)=-Y(t)dt+QF(Y(t))dt+Q^{\frac12}dW(t)
\]
is also ergodic, with the same invariant distribution $\mu_\star$ as the original dynamics. Note that the preservation of the invariant dynamics when preconditioning the dynamics crucially relies on the gradient structure condition $F=-DV$. As will be clearly explained below, the preconditioning technique permits to increase the temporal regularity of the trajectories, while the spatial regularity is not modified. Indeed, $Y$ is solution to an infinite dimensional stochastic evolution equation driven by a trace-class noise, thus trajectories of $Y$ are $\frac12^-$ H\"older continuous. 
This permits to benefit from standard techniques that have been extensively studied in recent years for finite dimensional SDEs, in particular for the design of high order schemes for the invariant distribution (non-Markovian schemes \cite{Leimkuhler13rco}, postprocessing \cite{Abdulle17oes,Vilmart15pif}, modified equations \cite{Abdulle12hwo, Bronasco22cef, Laurent20eab}), and for the design of variance reduction techniques  (Multilevel Monte-Carlo methods (MLMC) \cite{Giles08mmc,Giles15mmc}, perturbations \cite{Abdulle19act, Duncan16vru, Lelievre13onr}).

While stochastic Runge-Kutta methods are a popular approach for the design of high-order weak methods for SDEs \cite{Burrage00oco}, the use of postprocessing techniques \cite{Brehier_Vilmart:16,Vilmart15pif} yields cheap efficient methods with high-order specifically for the invariant distribution, i.e. they can be used to increase the order $q$ without modifying the weak order $p$.
Given an integrator \eqref{equation:one_step_integrator}, a postprocessor is a random mapping $\overline{\psi}^{\Delta t}:H\to H$, which defines a perturbation $\overline{Y}_n^{\Delta t}$ of the integrator output $Y_n^{\Delta t}$, such that $(Y_n^{\Delta t},\overline{Y}_n^{\Delta t})$ is a Markov chain and
\[
\overline{Y}_n^{\Delta t}=\overline{\psi}^{\Delta t}(Y_n^{\Delta t}).
\]
With a good choice of postprocessor, for finite dimensional ergodic SDEs it is shown in \cite{Vilmart15pif} that the order for the invariant distribution of $\overline{Y}_n^{\Delta t}$ can be higher than the one of $Y_n^{\Delta t}$.
Moreover, as the postprocessor correction is only applied at the last step of the method (hence the name postprocessing), the accuracy is improved with negligible additional computational cost.

In this work, we propose in particular the following new postprocessed method where $\overline{Y}_n$ yields second order for the invariant distribution $\mu_\star$ of the original dynamics \eqref{eq:semilin}:
\begin{align*}
Y_{n+1}&=Y_n+\Delta tG(Y_n+\frac12\Delta W_n^Q)+\Delta W_n^Q,\\
\overline{Y}_n&=Y_n+\frac12\Delta W_n^Q,
\end{align*}
where $G(y)=-y+QF(y)$.
The above method is an extension of the second order Leimkuhler--Matthews method \cite{Leimkuhler13rco, Leimkuhler14otl} for preconditioned stochastic partial differential equations. That method has been first introduced in the finite dimensional setting in the context of overdamped Langevin dynamics. It has originally been proposed as a non-Markovian method and has then been reformulated as a postprocessed method in \cite{Vilmart15pif}.
It was also recently extended for nonlinear diffusion coefficient in \cite{Bronasco2025PhD, Bronasco25els}. Compared with the processed integrator introduced in~\cite{Brehier_Vilmart:16}, where no preconditioning is employed, one is able to provide rigorous justification for achieving the higher order $q=2$ for the approximation of the invariant distribution.

Let us present the main results of this article. Section~\ref{sec:precond} presents the application of the preconditioning technique in the context of stochastic evolution equations. Proposition~\ref{propo:invariant} shows that the preconditioning does not modify the invariant distribution in the considered setting (see also Lemma~\ref{propo:invariant-general} for a more general version). In the error analysis, one employs solutions to auxiliary Kolmogorov and Poisson equations, Proposition~\ref{propo:Kolmogorov} provides some regularity properties of solutions to Kolmogorov equations. In Section~\ref{section:simple_CV_analysis}, we consider the standard explicit Euler scheme
\[
Y_{n+1}=Y_n+\Delta tG(Y_n)+\Delta W_n^Q
\]
applied to the preconditioned stochastic evolution equation, but the postprocessing technique has not been applied yet. We show for that method that $q=1$, see Theorem~\ref{theo:order1}. In fact, weak error estimates with order $p=1$, which are uniform with respect to time $T$, are established, and one obtains $q=p=1$ letting $T$ go to $\infty$. Note that obtaining methods of order $q=1$ is already an improvement compared with methods applied to the original dynamics, which only have order $\frac{1}{2}-$. Next, in Section~\ref{section:high_order_inv_meas}, we show that the method proposed above, which combines the preconditioning and the postprocessing techniques, achieves order $q=2$, see Theorem~\ref{theorem:high_order_inv_measure}. We first exhibit order conditions for a more general class of methods, and then show that they are satisfied for the considered scheme. Finally, numerical experiments are presented in Section~\ref{section:num}, in order to illustrate the main results and compare different methods.

\section{Setting}\label{sec:setting}

We present in this section the standard tools for the numerical analysis of SPDEs and refer to the monographs \cite{DPZ, JentzenKloeden:11, Kruse:14, Lord_Powell_Shardlow:14} for further details.
Let $H$ be a separable, infinite dimensional, Hilbert space.  The inner product and the norm are denoted by $\langle\cdot,\cdot\rangle$ and $|\cdot|$ respectively. The space of bounded linear operators from $H$ to $H$ is denoted by $\mathcal{L}(H)$, and the associated norm is denoted by $\|\cdot\|_{\mathcal{L}(H)}$.
The following standard convention is used in this article. Let $\varphi:H\to\R$ be a function of class $\mathcal{C}^2$. For all $x\in H$, the first-order derivative $D\varphi(x)$ and the second-order derivative $D^2\varphi(x)$ are interpreted as elements of $H$ and of $\mathcal{L}(H)$ respectively, owing to the Riesz theorem.

\begin{defi}\label{defi:QAnu}
Let $\bigl(e_k\bigr)_{k\in\N}$ be a complete orthonormal system of $H$ and $\bigl(q_k\bigr)_{k\in\N}$ be a sequence of positive real numbers, such that $\sum_{k=1}^{\infty}q_k<\infty$.
Set
\begin{align*}
Qx&=\sum_{k\in\N}q_k\langle x,e_k\rangle e_k~,\quad x\in H\\
-Ax&=\sum_{k\in\N}\lambda_k\langle x,e_k\rangle e_k~,\quad x\in D(A),
\end{align*}
with $\lambda_k=q_k^{-1}$ for all $k\in\N$, and $D(A)=\left\{x\in H;~\sum_{k=1}^{\infty}\lambda_k^2\langle x,e_k\rangle^2<\infty\right\}$.
Finally, let $\nu=\mathcal{N}(0,\frac{1}{2}Q)$ be the centered Gaussian distribution on $H$, with covariance operator $\frac12 Q$.
\end{defi}

Note that $Q$ is a self-adjoint, nonnegative and trace-class (${\rm Tr}(Q)=\sum_{k=1}^{\infty}\langle Qe_k,e_k\rangle=\sum_{k=1}^{\infty}q_k<\infty$), thus $\nu$ is a well-defined Gaussian distribution on $H$. Recall that $\nu$ is the distribution of the $H$-valued random variable $\sum_{k\in\N}\frac{\sqrt{q_k}}{\sqrt{2}}\xi_ke_k$, where $\bigl(\xi_k\bigr)_{k\in\N}$ are independent standard real-valued Gaussian random variables, $\xi_k\sim\mathcal{N}(0,1)$.
Without loss of generality, assume that the sequence $\bigl(q_k\bigr)_{k\in\N}$ is non-increasing. The sequence $\bigl(\lambda_k\bigr)_{k\in\N}$ is then non-decreasing, and $\lambda_k\underset{k\to\infty}\to \infty$. Observe that $\|Q\|_{\mathcal{L}(H)}=q_1=\lambda_1^{-1}$.
For example, let $H\in L^2(0,1)$ and for all $k\in\N$, set $e_k(\cdot)=\sqrt{2}\sin\bigl(k\pi\cdot)$ and $\lambda_k=\pi^2k^2$. Then, the operator $A$ given in Definition~\ref{defi:QAnu} is the Laplace operator with homogeneous Dirichlet boundary conditions, as described in \eqref{eq:heat1d}.

For $\alpha\in(0,1)$, we define the unbounded linear operator $(-A)^{\alpha}$ as follows:
\[
(-A)^\alpha x=\sum_{k\in\N}\langle x,e_k\rangle \lambda_k^\alpha e_k,
\]
for all $x\in D((-A)^\alpha)=\left\{x\in H;~\sum_{k\in\N}\lambda_k^{2\alpha}\langle x,e_k\rangle^2<\infty\right\}$. These operators are helpful in quantifying the spatial regularity of the stochastic processes defined below.
Let
\begin{equation}\label{eq:alphasup}
\alpha_{\sup}=\sup\Big\{\alpha;{\rm Tr}\bigl((-A)^{2\alpha}Q\bigr)=\sum_{k=1}^{\infty} q_k^{1-2\alpha}<\infty\Big\}.
\end{equation}
We assume that $\alpha_{\sup}>0$.
In particular, $\alpha_{\sup}=1/4$ for the Laplace operator with homogeneous Dirichlet boundary conditions. Note that $\alpha_{\sup}\leq 1/2$.

Let $\bigl(\beta_k\bigr)_{k\in\N}$ be a sequence of independent standard real-valued Wiener processes, defined on a probability space $(\Omega,\mathcal{F},\mathbb{P})$ which satisfies the usual conditions. For all $t\ge 0$, set
\begin{align*}
W(t)&=\sum_{k\in\N}\beta_k(t)e_k,\\
W^Q(t)&=\sum_{k\in\N}\sqrt{q_k}\beta_k(t)e_k.
\end{align*}
On the one hand, $\bigl(W(t)\bigr)_{t\ge 0}$ is a cylindrical Wiener process and does not take values in $H$. On the other hand, $\bigl(W^Q(t)\bigr)_{t\ge 0}$ is a $Q$-Wiener process and takes values in $H$.
It is straightforward to check that $\nu$ is the unique invariant distribution of the $H$-valued Ornstein-Uhlenbeck processes $X^{\rm OU}$ and $Y^{\rm OU}$ given by
\[
X^{\rm OU}(t)=e^{tA}X^{\rm OU}(0)+\int_{0}^{t}e^{(t-s)A}dW(s)~,\quad Y^{\rm OU}(t)=e^{-t}Y^{\rm OU}(0)+\int_{0}^{t}e^{s-t}dW^Q(s),
\]
where $e^{tA}x=\sum_{k\in\N}e^{-t\lambda_k}\langle x,e_k\rangle e_k$ for all $t\ge 0$. Note that the stochastic convolution is well-defined with values in $H$:
\[
\E|\int_{0}^{t}e^{(t-s)A}dW(s)|^2=\sum_{k=1}^{\infty}\E|\int_{0}^te^{-\lambda_k(t-s)}d\beta_k(s)|^2=\sum_{k=1}^{\infty}\frac{1-e^{-2\lambda_kt}}{2\lambda_k}<\infty.
\]
The Ornstein-Uhlenbeck processes $X^{\rm OU}$ and $Y^{\rm OU}$ are solutions of the stochastic evolution equations, driven by additive noise,
\[
dX^{\rm OU}(t)=AX^{\rm OU}(t)dt+dW(t)~,\quad dY^{\rm OU}(t)=-Y^{\rm OU}(t)dt+dW^Q(t),
\]
respectively.

The main question studied in this article is the estimation of $\int \varphi d\mu_\star$, using numerical integrators for associated ergodic dynamics, where the probability distribution $\mu_\star$ is given below.
\begin{defi}\label{defi:mu}
Let $V:H\to\R$ be a function of class $\mathcal{C}^2$ with bounded second order derivatives.
We define
\[
d\mu_\star(x)=Z^{-1}\exp\bigl(-2V(x)\bigr)d\nu(x),
\]
where $\nu=\mathcal{N}(0,\frac12 Q)$, and $Z=\int_{H}e^{-2V(x)}d\nu(x)$.
\end{defi}

Observe that $V$ is globally Lipschitz continuous, thus its growth is at most linear. By the Fernique theorem, $Z$ is finite and thus $\mu_\star$ is well-defined. More generally, it has finite moments of all orders, precisely, for all $\kappa\in\N$,
\[
\int |x|^\kappa d\mu_\star(x)<\infty.
\]
We then define the nonlinear operators $F,G:H\to H$ as follows:
\[
F(x)=-DV(x)~,\quad G(x)=-x+QF(x).
\]

To prove ergodicity and analyze the speed of convergence to equilibrium for the continuous and discrete-time processes studied below, the following assumption gives a convenient sufficient condition. We mention that this assumption could be weakened for the study of a nonglobally Lipschitz nonlinearity $F$ as studied in \cite{BG25}, but this is out of the scope of the present paper.
\begin{ass}\label{ass:AF}
Assume that
\[
{\rm Lip}(F)=\underset{x_1\neq x_2\in H}\sup~\frac{|F(x_2)-F(x_1)|}{|x_2-x_1|}<q_1^{-1}=\lambda_1=\underset{k\in\N}\min~\lambda_k.
\]
\end{ass}

For example, let $f:\R\to\R$ be a bounded, Lipschitz continuous mapping and $H=L^2(0,1)$. The Nemytskii operator $F:H\to H$ is defined for all $x\in H$ by
\begin{equation}
\label{equation:Nemytskii}
F(x)(\cdot)=f(x(\cdot)).
\end{equation}
Then $F(x)=-DV(x)$ satisfies Assumption \ref{ass:AF}, where
\[
V(x)=\int_{0}^{1}U(x(z))dz, \quad U'=-f.
\]

To conclude this section, let us introduce a finite dimensional discretization in space, which shall prove useful for the analysis given that a number of objects are not well-posed directly in the infinite-dimensional Hilbert space $H$. Let spectral Galerkin projection operators $\bigl(\pi^K\bigr)_{K\in\N}$ be defined as follows:
\begin{equation} \label{eq:defpiK}
\pi^Kx=\sum_{k=1}^{K}\langle x,e_k\rangle e_k.
\end{equation}
Let $H^K={\rm span}\{e_1,\ldots,e_K\}$ denote the range of $\pi^K$, with $\dim H^K=K$.
We define the probability distribution $\mu_\star^K$ on $H^K$ by
\begin{equation} \label{eq:defmustarK}
d\mu_\star^K(x)=\frac{1}{Z^K}\exp\bigl(-2V(x)\bigr)d\nu^K(x),
\end{equation}
where $\nu^K=\mathcal{N}(0,\frac12 Q^K)$, with $Q^K=\pi^KQ\pi^K=\sum_{k=1}^{K}\frac{\langle \cdot,e_k\rangle}{\lambda_k}e_k$, and $Z^K=\int_{H^K}e^{-2V(x)}d\nu^K(x)$. Note that $\nu^K$ is the push-forward measure of $\nu$ by $\pi_K$.
Observe that, for any smooth test function $\varphi:H\to\R$, one has
\[
\int_{H^K}\varphi\circ \pi^K d\mu_\star^K=\frac{\int_{H^K}\varphi\circ \pi^K e^{-V}d\nu^K}{\int_{H^K}e^{-V}d\nu^K}=\frac{\int_{H}\varphi\circ \pi^K e^{-V\circ \pi^K}d\nu}{\int_{H}e^{-V\circ \pi^K}d\nu}\underset{K\to\infty}\to \int_{H}\varphi d\mu_\star.
\]
In addition, let $V^K(x)=V(x)$ for all $x\in H^K$. Then, one has the identities $-DV^K(x)=-\pi^K DV(\pi^Kx)=\pi^K F(\pi^Kx)=:F^K(x)$ for all $x\in H^K$.
Note also that
\[
\underset{K\in\N}\sup~\int_{H^K}|x^K| d\mu_\star^K(x_K)<\infty, \quad \int_{H}|x| d\mu_\star(x)<\infty.
\]

\section{Preconditioning of ergodic SPDEs}
\label{sec:precond}

In this section, we introduce the preconditioned dynamics that will permit to derive high-order methods in spite of the low-regularity of the original dynamics.
A crucial ingredient in the analysis is to consider dynamics projected on finite dimensional spaces $H^K$ with finite dimension $K$ arbitrarily large and to derive estimates with uniform constants w.r.t.\ts $K$. In particular, in the remaining of the paper, $C$ denotes a generic constant independent of $K$.

Let $\mathcal{P}$ be a bounded, self-adjoint, linear operator that commutes with $A$. This means that $\mathcal{P}x=\sum_{k\in\N}p_k\langle x,e_k\rangle e_k$, and we assume that $p_k>0$ for all $k\in\N$, and $\underset{k\in\N}\sup~p_k<\infty$. Let $\bigl(Y^{\mathcal{P}}(t)\bigr)_{t\ge 0}$ be defined as the solution of
\begin{equation}\label{eq:precondSPDE-general}
dY^{\mathcal{P}}(t)=\mathcal{P}\Bigl(AY^{\mathcal{P}}(t)+F(Y^{\mathcal{P}}(t))\Bigr)dt+\mathcal{P}^{\frac12}dW(t).
\end{equation}
For $K\in\N$, we denote $\bigl(Y^{\mathcal{P},K}\bigr)_{t\ge 0}$ the solution of
spectral Galerkin method of~\eqref{eq:precondSPDE-general},
\begin{equation}\label{eq:precondSPDE-general-Galerkin}
dY^{\mathcal{P},K}(t)=\mathcal{P}\Bigl(AY^{\mathcal{P},K}(t)+F^K(Y^{\mathcal{P},K}(t))\Bigr)dt+ dW^{\mathcal{P},K}(t), \quad W^{\mathcal{P},K}=\pi^K W^{\mathcal{P}},
\end{equation}
where $F^K=\pi^K \circ F \circ \pi^K$ and $Y^{\mathcal{P},K}(0)=\pi^K Y^{\mathcal{P}}(0)$. Observe that the projection operator $\pi^K$ defined in \eqref{eq:defpiK} commutes with $A$ and $\mathcal{P}$.
Note that choosing $\mathcal{P}=I$, problem~\eqref{eq:precondSPDE-general} then reduces to the original problem~\eqref{eq:semilin} without any preconditioning. Its spectral Galerkin projection is
\begin{equation}\label{eq:semilin_K}
dX^K(t)=AX^K(t)dt+F^K(X^K(t))dt+dW^K(t), \quad X^K(0)=\pi^K X(0).
\end{equation}

\subsection{Preconditioning preserves the invariant distribution}

A natural choice of preconditionning is $\mathcal{P}=Q=(-A)^{-1}$ which yields the modified problems
\begin{align}
\label{eq:precondSPDE}
dY(t)&=\big(-Y(t)+QF(Y(t))\big)dt+dW^{Q}(t),\\
\label{eq:precondSPDE_K}
dY^K(t)&=\big(-Y^K(t)+QF^K(Y^K(t))\big)dt+dW^{Q,K}(t),\quad
Y^K(0)=\pi^K Y(0).
\end{align}
Compared with~\eqref{eq:semilin}, we observe that the stochastic evolution equation~\eqref{eq:precondSPDE} is not a parabolic stochastic PDE anymore as the unbounded diffusion operator $A$ has been removed via preconditioning. However, contrary to the cylindrical noise in~\eqref{eq:semilin}, the stochastic evolution equation~\eqref{eq:precondSPDE} is driven by a trace-class noise, since ${\rm Tr}(Q)<\infty$. Since $G=-I+QF$ and $G^K=-I+QF^K$ are Lipschitz continuous under Assumption~\ref{ass:AF}, it is straightforward to check that, for any initial condition $y_0\in H$, both equations~\eqref{eq:precondSPDE}-\eqref{eq:precondSPDE_K} admit a unique solution $\bigl(Y(t)\bigr)_{t\ge 0}$, defined at all positive times, with $Y(0)=y_0$.

\begin{propo}\label{propo:invariant}
Assume Assumption~\ref{ass:AF} and let $Y$, $Y^K$ be the solutions of the preconditioned stochastic evolution equations \eqref{eq:precondSPDE}-\eqref{eq:precondSPDE_K}.
Then, $\mu_\star$ (respectively $\mu_\star^K$) is the unique invariant distribution for~\eqref{eq:semilin} and~\eqref{eq:precondSPDE} (respectively for~\eqref{eq:semilin_K} and~\eqref{eq:precondSPDE_K}). Moreover, exponential mixing holds: there exists $C>0$ such that for all $T\ge 0$ and all globally Lipchitz $\varphi: H\rightarrow \R$, one has
\begin{align*}
\big|\E[\varphi(Y(T))]-\int\varphi d\mu_\star\big|&\le C{\rm Lip}(\varphi)e^{-(1-\frac{{\rm Lip}(F)}{\lambda_1})T}(1+\E|Y(0)|),\\
\underset{K\in\N}\sup ~
\Big|\E[\varphi(Y^K(T))]-\textstyle{\int}\varphi d\mu_\star^K\Big|
&\le C{\rm Lip}(\varphi)e^{-(1-\frac{{\rm Lip}(F)}{\lambda_1})T}(1+\E|Y(0)|).
\end{align*}
\end{propo}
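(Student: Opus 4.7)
The plan is to combine a synchronous coupling argument, which gives a Wasserstein contraction uniformly in $K$, with a Fokker--Planck computation in finite dimension; the infinite-dimensional invariance is then obtained by passing to the Galerkin limit, and the exponential mixing and uniqueness follow from the contraction.

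For the contraction, I consider two solutions $Y^{y_1}, Y^{y_2}$ of \eqref{eq:precondSPDE} driven by the same $Q$-Wiener process. Their difference $u(t) = Y^{y_1}(t) - Y^{y_2}(t)$ satisfies the pathwise noise-free equation
\[
\dot{u}(t) = -u(t) + Q\bigl(F(Y^{y_1}(t)) - F(Y^{y_2}(t))\bigr),
\]
so that, using $\|Q\|_{\mathcal{L}(H)} = \lambda_1^{-1}$ together with Assumption~\ref{ass:AF}, Gronwall's inequality applied to $|u|$ yields
\[
|u(t)| \le \exp\bigl(-(1 - \mathrm{Lip}(F)/\lambda_1)\,t\bigr)\, |y_1 - y_2|.
\]
The identical argument applies to $Y^K$ since $\|Q^K\|_{\mathcal{L}(H)} \le \|Q\|_{\mathcal{L}(H)}$ and $\mathrm{Lip}(F^K) \le \mathrm{Lip}(F)$, producing the contraction rate uniformly in $K$.

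For invariance in finite dimension, I would rewrite $Y^K$ as overdamped Langevin dynamics with preconditioner $Q^K$ and Hamiltonian $H^K(x) = \langle -Ax, x\rangle + 2V(x)$ on $H^K$. Using the identity $QA = -I$ on $H^K$, a direct computation yields $-\tfrac{1}{2}Q^K \nabla H^K(x) = -x + QF^K(x) = G^K(x)$, so that the generator of $Y^K$ is symmetric in $L^2(H^K, e^{-H^K}dx)$ by the standard integration-by-parts computation, identifying $\mu_\star^K \propto e^{-H^K}$ as the invariant density. The same argument with $\mathcal{P} = I$ handles $X^K$. To lift invariance to infinite dimension, I would combine uniform-in-$K$ moment bounds, strong convergence of the Galerkin solutions to the full-dimensional dynamics, and the weak convergence $\mu_\star^K \to \mu_\star$ recorded in Section~\ref{sec:setting}, so as to pass to the limit in the finite-dimensional invariance identity.

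For the exponential mixing estimate, invariance of $\mu_\star$ allows one to write, for Lipschitz $\varphi$ and initial condition $Y(0) = y$,
\[
\bigl|\E[\varphi(Y^y(T))] - \textstyle\int \varphi\, d\mu_\star\bigr|
\le \mathrm{Lip}(\varphi) \int \E|Y^y(T) - Y^{y'}(T)|\, d\mu_\star(y'),
\]
which combines with the coupling bound and the moment estimate $\int |y'|\, d\mu_\star(y') < \infty$ recorded in Section~\ref{sec:setting} to give the claimed rate; averaging over $Y(0)$ yields the final form, and the uniform-in-$K$ version follows from the $K$-independent contraction rate and $\sup_K \int |y|\, d\mu_\star^K(y) < \infty$. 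Uniqueness of the invariant measure is then an immediate consequence of the contraction. I expect the main obstacle to be the passage from finite to infinite dimension in the invariance statement: unlike in the finite-dimensional case, there is no Lebesgue measure on $H$, so a careful uniform-in-$K$ stability analysis of the Galerkin scheme and the associated invariant measures is required, whereas the contraction and mixing themselves are direct consequences of the Lipschitz structure and Assumption~\ref{ass:AF}.
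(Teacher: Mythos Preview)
Your proposal is correct and follows essentially the same approach as the paper: synchronous coupling for the contraction (with the same rate $1-\mathrm{Lip}(F)/\lambda_1$), identification of $\mu_\star^K$ via the gradient/Langevin structure in finite dimension, passage to the limit $K\to\infty$ for the infinite-dimensional invariance, and the mixing estimate by averaging the pathwise contraction over the invariant law. The only cosmetic difference is that the paper identifies the finite-dimensional invariant density after the change of variables $\tilde Y^{K}=\mathcal{P}^{-1/2}Y^{K}$ to reduce to a standard Wiener-driven Langevin SDE, whereas you invoke directly the symmetry of the $Q^K$-preconditioned generator in $L^2(e^{-H^K}dx)$; the two arguments are equivalent.
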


\begin{remark}
The convergence speed estimate of Proposition \ref{propo:invariant} is to be compared with the estimate for the original problem \eqref{eq:semilin}, where, under Assumption~\ref{ass:AF}, the solution $X$ satisfies 
\begin{align*}
\big|\E[\varphi(X(T))-\int\varphi d\mu_\star\big|&\le C{\rm Lip}(\varphi)e^{-(\lambda_1-{\rm Lip}(F))T}(1+\E|X(0)|).
\end{align*}
\end{remark}

Under Assumption~\ref{ass:AF}, one has the following estimates: there exists $C>0$ such that
\begin{align}\label{eq:momentY}
\underset{t\ge 0}\sup~\E|Y(t)|&\le C(1+\E|Y(0)|),\\
\label{eq:momentYK}
\underset{K\in\N,t\ge 0}\sup~\E|Y^K(t)|&\le C(1+\E|Y(0)|).
\end{align}
Proposition~\ref{propo:invariant} is then a straightforward corollary of Lemma~\ref{propo:invariant-general} below.

\begin{lemma}\label{propo:invariant-general}
Let Assumption~\ref{ass:AF} be satisfied, $\mu_\star$ and $\mu_\star^K$ be given by Definition \ref{defi:mu} and equation \eqref{eq:defmustarK}, and assume that $\lambda_1\underset{k\in\N}\sup~p_k\le \underset{k\in\N}\inf~(\lambda_kp_k)$.
Then, there exist $C,\gamma>0$ such that, for every globally Lipschitz test function $\varphi:H\to\R$ and for all $t\ge 0$,
\begin{align*}
\big|\E[\varphi(Y^{\mathcal{P}}(t))]-\int \varphi d\mu_\star\big|&\le C{\rm Lip}(\varphi)e^{-\gamma t}\bigl(1+\E|Y^{\mathcal{P}}(0)|\bigr),\\
\underset{K\in\N}\sup ~\big|\E[\varphi(Y^{\mathcal{P},K}(t))]-\int \varphi d\mu_\star^K\big|&\le C{\rm Lip}(\varphi)e^{-\gamma t}\bigl(1+\E|Y^{\mathcal{P}}(0)|\bigr),
\end{align*}
and $\mu_\star$ (resp.\ts $\mu_\star^K$) is the unique invariant distribution of~\eqref{eq:precondSPDE-general} (resp.\ts \eqref{eq:precondSPDE-general-Galerkin}).
\end{lemma}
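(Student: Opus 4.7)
The strategy is to work in the Galerkin truncation~\eqref{eq:precondSPDE-general-Galerkin} (living on the finite-dimensional space $H^K\simeq\R^K$), where standard finite-dimensional tools apply, and then transfer the conclusions to~\eqref{eq:precondSPDE-general} by passing to the limit $K\to\infty$. The two key ingredients, both uniform in $K$, are (i) a direct verification that $\mu_\star^K$ is invariant for the Galerkin system via integration by parts, which relies on the gradient structure $F=-DV$ and the self-adjointness of $\mathcal{P}^K:=\pi^K\mathcal{P}\pi^K$, and (ii) a synchronous coupling argument yielding exponential contraction.

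\textbf{Step 1 (invariance in finite dimension).} Rewrite~\eqref{eq:precondSPDE-general-Galerkin} as the Langevin SDE on $H^K$,
\[
dY^{\mathcal{P},K}(t) = -\mathcal{P}^K DU^K(Y^{\mathcal{P},K}(t))\,dt + (\mathcal{P}^K)^{1/2}\,dW^K(t), \qquad U^K(y) := V(y) - \tfrac{1}{2}\langle y, Ay\rangle,
\]
so that $-DU^K(y) = Ay + F^K(y)$ on $H^K$. Since $\nu^K$ is Gaussian with Lebesgue density $\propto\exp(\langle y, Ay\rangle)$ on $H^K$, the measure $\mu_\star^K$ has Lebesgue density $\propto\exp(-2U^K)$. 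A direct integration by parts, combined with the self-adjointness of $\mathcal{P}^K$, yields
\[
\int_{H^K}\Bigl[\langle -\mathcal{P}^K DU^K, D\varphi\rangle + \tfrac{1}{2}\mathrm{Tr}(\mathcal{P}^K D^2\varphi)\Bigr]\,d\mu_\star^K(y) = 0,
\]
which shows that $\mu_\star^K$ is invariant for the Galerkin semigroup.

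\textbf{Step 2 (synchronous coupling).} Let $Y^{\mathcal{P}}_1$ and $Y^{\mathcal{P}}_2$ solve~\eqref{eq:precondSPDE-general} driven by the same cylindrical Wiener process $W$ with distinct initial data. Their difference $Z=Y^{\mathcal{P}}_1-Y^{\mathcal{P}}_2$ solves a pathwise ODE (the stochastic part cancels). Expanding in the basis $(e_k)_{k\in\N}$ gives $\langle Z,\mathcal{P}AZ\rangle\le -\inf_k(\lambda_k p_k)|Z|^2$ and $|\langle Z,\mathcal{P}(F(Y^{\mathcal{P}}_1)-F(Y^{\mathcal{P}}_2))\rangle|\le \sup_k p_k\cdot\mathrm{Lip}(F)|Z|^2$. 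Combining the hypothesis $\lambda_1\sup_k p_k\le \inf_k(\lambda_k p_k)$ with Assumption~\ref{ass:AF} yields
\[
\tfrac{1}{2}\tfrac{d}{dt}|Z(t)|^2 \le -\gamma|Z(t)|^2,\qquad \gamma := \inf_k(\lambda_k p_k) - \sup_k p_k\cdot\mathrm{Lip}(F) \ge \sup_k p_k\,(\lambda_1-\mathrm{Lip}(F)) > 0,
\]
hence $|Z(t)|\le e^{-\gamma t}|Z(0)|$. The identical argument applies in the Galerkin setting with the same constant $\gamma$, since $\pi^K$ commutes with $\mathcal{P}$ and $A$ and $\mathrm{Lip}(F^K)\le\mathrm{Lip}(F)$.

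\textbf{Step 3 (conclusion and main obstacle).} At the Galerkin level, coupling the trajectory with a second copy started from $Y'\sim\mu_\star^K$ (invariant by Step~1) and invoking the bound $\sup_K\int|x|d\mu_\star^K(x)<\infty$ recalled in Section~\ref{sec:setting} immediately yields the claimed $K$-uniform mixing estimate; uniqueness of the invariant distribution then follows from the contraction. Invariance of $\mu_\star$ for~\eqref{eq:precondSPDE-general} is obtained by passing to the limit $K\to\infty$ in the Galerkin invariance identity $\int \E[\varphi(Y^{\mathcal{P},K}(t;x))]\,d\mu_\star^K(x) = \int\varphi\,d\mu_\star^K$, combining the weak convergence $\mu_\star^K\to\mu_\star$ (Section~\ref{sec:setting}) with the $L^2(\Omega;H)$-convergence $Y^{\mathcal{P},K}(t;\pi^K x)\to Y^{\mathcal{P}}(t;x)$ provided by a Gronwall argument together with~\eqref{eq:momentY}--\eqref{eq:momentYK}. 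The infinite-dimensional mixing bound then follows from the coupling of Step~2 applied with the second copy drawn from $\mu_\star$. The delicate point is precisely this passage to the limit: it is to guarantee the $K$-uniformity of the contraction constant $\gamma$ that the hypothesis on $\mathcal{P}$ is stated in the form $\lambda_1\sup_k p_k\le\inf_k(\lambda_k p_k)$, which is exactly what Steps~1--2 require.
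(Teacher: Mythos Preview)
Your proof is correct and follows essentially the same strategy as the paper: verify invariance of $\mu_\star^K$ for the Galerkin system, establish exponential contraction via synchronous coupling with the same rate $\gamma=\inf_k(\lambda_kp_k)-\sup_k p_k\cdot\mathrm{Lip}(F)$, and pass to the limit $K\to\infty$. The only notable difference is in Step~1: you argue by direct integration by parts using the self-adjointness of $\mathcal{P}^K$, whereas the paper performs the change of variables $\tilde{Y}^{\mathcal{P},K}=\mathcal{P}^{-1/2}Y^{\mathcal{P},K}$ to reduce to a standard Langevin SDE with identity diffusion; both routes are equivalent and standard.
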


Note that the condition in Lemma~\ref{propo:invariant-general} is satisfied for $\mathcal{P}=I$ and $\mathcal{P}=(-A)^{-1}$. In the latter case, $\lambda_1\underset{k\in\N}\sup~p_k=1=\underset{k\in\N}\inf~(\lambda_kp_k)$.

\begin{proof}[Proof of Lemma~\ref{propo:invariant-general}]
Let $K\in\N$ and consider the spectral Galerkin method $\bigl(Y^{\mathcal{P},K}\bigr)_{t\ge 0}$ defined in \eqref{eq:precondSPDE-general-Galerkin}.
First, note that $\mu_\star^K$ is the unique invariant distribution of~\eqref{eq:precondSPDE-general-Galerkin} on $H^K$. Indeed, let $\tilde{Y}^{\mathcal{P},K}(t)=\mathcal{P}^{-\frac12}Y^{\mathcal{P},K}(t)$ (note that $\mathcal{P}$ is invertible on $H^K$), and observe that this auxiliary process solves the SDE (on $H^K$)
\[
d\tilde{Y}^{\mathcal{P},K}(t)=\mathcal{P}A\tilde{Y}^{\mathcal{P},K}(t)dt+\mathcal{P}^{\frac12}F^K(\mathcal{P}^{\frac12}\tilde{Y}^{\mathcal{P},K}(t))dt+dW^K(t),
\]
where $W^K(t)=\pi^KW(t)$ is a standard Wiener process in $H^K$. Note that the drift coefficient satisfies, for all $y\in H^K$, the identity
\[
\mathcal{P}Ay+\mathcal{P}^{\frac12}F^K(\mathcal{P}^{\frac12}y)=-\frac{1}{2}D\bigl(|(-A)^{\frac12}\mathcal{P}^{\frac12}(\cdot)|^2\bigr)(y)-D\tilde{V}^{\mathcal{P},K}(y)
\]
with $\tilde{V}^{\mathcal{P},K}(y)=V(\pi^K\mathcal{P}^{\frac12}y)=V^K(\mathcal{P}^{\frac12}y)$. It is a standard result that the unique invariant distribution of $\tilde{Y}^{\mathcal{P},K}$ has density (with respect to Lebesgue measure on $H^K$) which is proportional to
\[
\exp\Bigl(-|(-A)^{\frac12}\mathcal{P}^{\frac12}y|^2-2\tilde{V}^{\mathcal{P},K}(y)\Bigr).
\]
Since $Y^{\mathcal{P},K}(t)=\mathcal{P}^{\frac12}\tilde{Y}^{\mathcal{P},K}(t)$, this identity and the expression of $\tilde{V}^{\mathcal{P},K}$ then imply that the probability distribution with density (with respect to Lebesgue measure on $H^K$) which is proportional to
\[
\exp\Bigl(-|(-A)^{\frac12}y|^2-2V^{K}(y)\Bigr)
\]
is the unique invariant distribution of~\eqref{eq:precondSPDE-general}. It is straightforward to check that this probability distribution is $\mu_\star^K$.

Second, let $Y_1^{\mathcal{P},K}$ and $Y_2^{\mathcal{P},K}$ be two solutions of~\eqref{eq:precondSPDE-general-Galerkin}, driven by the same noise, and starting from two different initial conditions $y_1^K$ and $y_2^K$. Let $\delta Y^K(t)=Y_2^{\mathcal{P},K}(t)-Y_1^{\mathcal{P},K}(t)$, and $\delta F^K(t)=F^K(Y_2^{\mathcal{P},K}(t))-F^K(Y_1^{\mathcal{P},K}(t))$. Then $dR(t)=\mathcal{P}A\delta Y^K(t)dt+\mathcal{P}\delta F^K(t)$, and
\begin{align*}
\frac{1}{2}\frac{d|\delta Y^K(t)|^2}{dt}&=\langle \mathcal{P}A\delta Y^K(t),\delta Y^K(t)\rangle+\langle \mathcal{P}\delta F^K(t),\delta Y^K(t)\rangle\\
&\le -\underset{1\le k\le K}\inf~\lambda_kp_k |\delta Y^K(t)|^2+\underset{1\le k\le K}\sup~p_k |\delta F^K(t)||\delta Y^K(t)|\\
&\le \Bigl(-\underset{k\in\N}\inf~\lambda_kp_k +\underset{k\in\N}\sup~p_k {\rm Lip}(F)\Bigr)|\delta Y^K(t)|^2.
\end{align*}
Applying the Gronwall lemma yields
\[
|\delta Y^K(t)|\le e^{-\gamma(\mathcal{P},F)t}|\delta Y^K(0)|,
\]
with $\gamma(\mathcal{P},F)=\underset{k\in\N}\inf~\lambda_kp_k -\underset{k\in\N}\sup~p_k {\rm Lip}(F)>0$, for all $t\ge 0$, and all initial conditions $y_1^K$ and $y_2^K$. Since $\varphi$ is globally Lipschitz continuous, for all $y^K\in H^K$, one has
\begin{align*}
\big|\E_{y^K}[\varphi(Y^{\mathcal{P},K}(t))]-\int \varphi d\mu_\star^{K}\big|&= \big|\E_{y^K}[\varphi(Y^{\mathcal{P}}(t))]-\E_{\mu^K}[\varphi(Y^{\mathcal{P}}(t))]\big|\\
&\le {\rm Lip}(\varphi)\iint_{H^K\times H^K}\E|\delta Y^K(t)| d\mu_\star^{K}(y_2^K) d\delta_{y^K}(y_K^1)\\
&\le {\rm Lip}(\varphi)e^{-\gamma(\mathcal{P},F)t}\int_{H^K}|\cdot -y^K|d\mu_{\star}^K.
\end{align*}
Finally, taking to the limit $K\to\infty$ concludes the proof.
\end{proof}

In the sequel the preconditioning operator $\mathcal{P}$ is chosen as $\mathcal{P}=(-A)^{-1}$ (unless specified otherwise), and thus the stochastic evolution equation~\eqref{eq:semilin} is considered. This choice is motivated by a practical reason: in many situations, (a discrete version of) the operator $A$ is known, and it is not required to compute numerically its inverse $\mathcal{P}=(-A)^{-1}$ and its square root $\mathcal{P}^{\frac12}$. Instead, in the numerical integrators presented below, computing the LU and Cholesky decompositions is sufficient.
The choice $\mathcal{P}=(-A)^{-1}$ is also motivated by the following theoretical property, stated below: the trajectories of the solution $\bigl(Y(t)\bigr)_{t\ge 0}$ are almost surely $\alpha$-H\"older continuous for all $\alpha\in(0,\frac12)$. This property follows from the application of the Kolmogorov-Centzov regularity criterion, and the estimate
\[
\bigl(\E|Y(t)-Y(s)|^q\bigr)^{\frac1q}\le C|t-s|^{\frac12},~\forall s,t\in[0,T].
\]
That estimate is established using the trace-class property of the noise (${\rm Tr}(Q)<\infty$). The computations are straightforward and are thus left for the readers.

\begin{remark}
On the one hand, this regularity is much better than for trajectories of the original SPDE~\eqref{eq:semilin}, which are only $\frac{\alpha}{2}$-H\"older continuous for such parameters $\alpha\in(0,\frac12)$. On the other hand, this temporal regularity is the same as for solutions of finite dimensional SDEs. As a consequence, it is natural to expect that the order of convergence with respect to the time-step size is increased, and that one can design higher order methods.
Note that the spatial regularity of trajectories is unchanged by the preconditioning procedure. This is consistent with the fact that $\bigl(X(t)\bigr)_{t\ge 0}$ and $\bigl(Y(t)\bigr)_{t\ge 0}$ have the same invariant distribution $\mu_\star$. Note also that applying the spectral Galerkin discretization to the preconditioned equation~\eqref{eq:precondSPDE} yields the same process as when applying the preconditioning technique after the spectral Galerkin discretization of the original equation~\eqref{eq:precondSPDE}. As a consequence, applying the preconditioning technique does not modify the error due to spatial discretization.
\end{remark}

\subsection{Kolmogorov and Poisson equations for the preconditioned dynamics}

Let $M>0$, $\beta\in[0,2\alpha_{\sup})$ (defined in \eqref{eq:alphasup}), $H^\beta=D((-A)^{\beta/2})$, and $|x|_\beta=|(-A)^{\beta/2}x|$, we say that $F\colon H\rightarrow H$ is in $\mathfrak{X}^{M,\beta}_P$ if for all $K$, $F^K\colon H^{K,\beta}\rightarrow H^K$ is $M$ times differentiable and if there exists $C>0$, $\kappa>0$ independent of $K$ such that for all $1< m\leq M$,
\[
|D^m F^K(x)(h_1,\dots,h_m)|\leq C(1+|x|_\beta^\kappa)|h_1|_\beta\dots |h_m|_\beta,\quad x,h_1,\dots,h_m\in H^\beta.
\]
Similarly, we say that $\varphi\colon H\rightarrow \R$ is in $\mathcal{C}^{M,\beta}_P$ if for all $K$, $\varphi^K\colon H^{K,\beta}\rightarrow \R$ is $M$ times differentiable and if there exists $C>0$, $\kappa>0$ independent of $K$ such that for all $1\leq m\leq M$,
\[
|D^m \varphi^K(x)(h_1,\dots,h_m)|\leq C(1+|x|_\beta^\kappa)|h_1|_\beta\dots |h_m|_\beta,\quad x,h_1,\dots,h_m\in H^\beta.
\]
For instance for the one-dimensional Laplacian, observe that the Nemytskii operator \eqref{equation:Nemytskii} for $f(x)=-x+\cos(x)$ satisfies $F\in \mathfrak{X}^{M,\beta}_P$ for $M$ arbitrarily large and some $\beta\in (0,1/4)$.

Define the auxiliary functions $v$ and $\Psi$ as follows, where for the sake of brevity, we omit the index $K$ of the spectral Galerkin projection in the notation: for all $t\ge 0$ and $y\in H^K$,
\begin{equation}\label{eq:v}
v(t,y)=\E_y[\varphi(Y^K(t))],
\end{equation}
and
\begin{equation} \label{eq:defPsi}
\Psi(y)=\int_{0}^{\infty}\bigl(v(t,y)-\int\varphi d\mu_\star^K\bigr)dt.
\end{equation}
Note that the function $\Psi$ is well-defined owing to Proposition~\ref{propo:invariant}.

Introduce the infinitesimal generator $\mathcal{L}$ associated with the preconditioned dynamics~\eqref{eq:precondSPDE}: if $\varphi:H\to\R$ is twice differentiable,
\[
\mathcal{L}\varphi(y)=D\varphi(y).G^K(y)
+\sum_{k=1}^K q_kD^2\varphi(y).(e_k,e_k).
\]
On the one hand, $v$ is the solution of the Kolmogorov equation
\begin{equation}\label{eq:Kolmogorov}
\frac{\partial}{\partial t}v(t,y)=\mathcal{L}v(t,y)= 
Dv(t,y).G^K(y)+\frac12 \sum_{k=1}^K q_kD^2v.(e_k,e_k),
\end{equation}
with initial condition $v(0,\cdot)=\varphi$, where $Dv(t,y)$ and $D^2v(t,y)$ denote spatial derivatives of $v$.
On the other hand, $\Psi$ is the solution of the Poisson equation with $x\in H^K$,
\begin{equation}\label{eq:Poisson}
-\mathcal{L}\Psi(x)=\varphi(x)-\int\varphi d\mu_\star^K.
\end{equation}

The first and second order spatial derivatives of $v$ satisfy the estimates given by Proposition~\ref{propo:Kolmogorov} below.
\begin{propo}\label{propo:Kolmogorov}
Assume Assumption~\ref{ass:AF} with $\gamma=1-\frac{{\rm Lip}(F)}{\lambda_1}>0$. Consider $F\in \mathfrak{X}^{M,\beta}_P$, $v(t,y)$ defined in \eqref{eq:v}, and $\varphi\in \mathcal{C}^{M,\beta}_P$.
Then, there exists $C>0$, such that for all $t\ge 0$ and all $y\in H$, $h_1,\dots,h_m\in H^\beta$, one has
\begin{equation}\label{eq:propo-Kolmogorov}
|D^m v(t,y)(h_1,\dots,h_m)|\leq C (1+|y|_\beta^\kappa) e^{-\gamma t}|h_1|_\beta\dots |h_m|_\beta.
\end{equation}
\end{propo}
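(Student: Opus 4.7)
The plan is to obtain~\eqref{eq:propo-Kolmogorov} by differentiating $v(t,y) = \E[\varphi(Y^K(t,y))]$ inside the expectation through the variation processes of the stochastic flow, which is the standard Bismut-style approach. Denote $\eta^{(k)}_{h_1,\ldots,h_k}(t) := D_y^k Y^K(t,y)(h_1,\ldots,h_k)$ the $k$-th Fréchet derivative of the flow with respect to the initial condition. A Faà di Bruno decomposition then writes
\[
D^m v(t,y)(h_1,\ldots,h_m) = \sum_{j=1}^m \sum_{\pi \in \mathcal{P}_j(m)} \E\bigl[D^{j}\varphi(Y^K(t))\bigl(\eta^{(|\pi_1|)}_{h_{\pi_1}}(t),\ldots,\eta^{(|\pi_j|)}_{h_{\pi_j}}(t)\bigr)\bigr],
\]
where $\mathcal{P}_j(m)$ is the set of partitions of $\{1,\ldots,m\}$ into $j$ blocks. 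Since $\varphi \in \mathcal{C}^{M,\beta}_P$, each $j$-linear derivative $D^j\varphi$ is controlled in the $H^\beta$ norm, so it suffices to bound $|\eta^{(k)}(t)|_\beta$ by $e^{-\gamma t}\prod|h_{i_\ell}|_\beta$ (up to polynomial growth in $|Y^K|_\beta$) with constants uniform in $K$.

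First I would treat the first variation $\eta^h$, which solves the pathwise linear ODE $d\eta^h/dt = -\eta^h + QDF^K(Y^K(t))\eta^h$ with $\eta^h(0) = h$. Computing $\frac{1}{2}\frac{d}{dt}|\eta^h|_\beta^2$, the crucial algebraic point is that $Q = (-A)^{-1}$ commutes with every $(-A)^{\beta/2}$ and is smoothing: $\|Qz\|_{H^\beta}^2 = \sum_k \lambda_k^{\beta-2}z_k^2 \le \lambda_1^{\beta-2}|z|^2$. Combined with $|\eta^h| \le \lambda_1^{-\beta/2}|\eta^h|_\beta$ and Assumption~\ref{ass:AF}, Cauchy--Schwarz yields
\[
\frac{1}{2}\frac{d|\eta^h|_\beta^2}{dt} \le -|\eta^h|_\beta^2 + \lambda_1^{-1}\mathrm{Lip}(F)|\eta^h|_\beta^2 = -\gamma|\eta^h|_\beta^2,
\]
so $|\eta^h(t)|_\beta \le e^{-\gamma t}|h|_\beta$ pathwise, uniformly in $K$.

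For $m \ge 2$, the variation $\eta^{(m)}$ solves an inhomogeneous equation $d\eta^{(m)}/dt = DG^K(Y^K)\eta^{(m)} + R_m(t)$ with $\eta^{(m)}(0) = 0$, where $R_m$ is a sum of terms $Q D^j F^K(Y^K)(\eta^{(k_1)},\ldots,\eta^{(k_j)})$ with $k_1+\cdots+k_j = m$ and $j \ge 2$. Proceeding by induction on $m$, the polynomial bound $F \in \mathfrak{X}^{M,\beta}_P$, the smoothing of $Q$, and the induction hypothesis applied to each $\eta^{(k_i)}$ give a pathwise estimate of the form $|R_m(t)|_\beta \le C(1+\sup_{s\le t}|Y^K(s)|_\beta^\kappa) e^{-2\gamma t}\prod|h_i|_\beta$, the exponent $2\gamma$ arising from the product of at least two variation factors. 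The same $H^\beta$-energy method applied to $\eta^{(m)}$ combined with Duhamel's formula then yields
\[
|\eta^{(m)}(t)|_\beta \le C\bigl(1+\sup_{s \le t}|Y^K(s)|_\beta^\kappa\bigr) e^{-\gamma t}\prod_{i=1}^m|h_i|_\beta.
\]
Uniform-in-$t$ and uniform-in-$K$ moment estimates $\E\sup_{s\le t}|Y^K(s)|_\beta^p \le C_p(1+|y|_\beta^p)$, valid because $\mathrm{Tr}((-A)^\beta Q) = \sum_k \lambda_k^{\beta-1} < \infty$ for $\beta < 2\alpha_{\sup}$ (via maximal inequalities applied to the Ornstein--Uhlenbeck part and Gronwall for the Lipschitz drift), allow one to take expectations in the Faà di Bruno sum, absorb the polynomial factors by Hölder, and obtain~\eqref{eq:propo-Kolmogorov}.

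The main difficulty is to carry out the whole argument in the fractional norm $|\cdot|_\beta$ with constants independent of $K$ while recovering the \emph{same} exponential rate $\gamma = 1 - \mathrm{Lip}(F)/\lambda_1$ appearing in Proposition~\ref{propo:invariant}. The preconditioning $\mathcal{P} = (-A)^{-1}$ is precisely what makes this possible: the commutation of $Q$ with every fractional power of $-A$ and its one-derivative smoothing exactly compensate the loss introduced by the $H^\beta$ norm, so the contraction rate is preserved without restriction on $\beta \in [0, 2\alpha_{\sup})$. Handling the polynomial growth in $|Y^K|_\beta$ when taking expectations of the Faà di Bruno terms, without spoiling the exponential decay $e^{-\gamma t}$, is the other delicate point and is resolved by the uniform moment estimates.
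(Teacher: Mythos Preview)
Your strategy is sound and, for the first variation $\eta^h$, actually cleaner than the paper's. The paper does not perform the energy estimate directly in $|\cdot|_\beta$; instead it splits $\eta^h(t)=e^{-t}h+\tilde\eta^h(t)$, controls $\tilde\eta^h$ by an energy estimate in the norm $|(-A)^{1/2}\cdot|$ (where the identity $\langle (-A)^{1/2}Qz,(-A)^{1/2}\tilde\eta\rangle=\langle z,\tilde\eta\rangle$ gives a clean Poincar\'e-type cancellation), and only at the end invokes $\beta\le 1/2$ to transfer the bound to $H^\beta$. Your direct $H^\beta$ computation, via $|Qz|_\beta\le\lambda_1^{(\beta-2)/2}|z|$ and $|z|\le\lambda_1^{-\beta/2}|z|_\beta$, recovers the same net factor $\lambda_1^{-1}$ and hence the exact rate $\gamma$ pathwise, without the decomposition, without the auxiliary $H^{1/2}$ norm, and without any restriction on $\beta$. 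For $m\ge 2$ the two arguments coincide in structure (inhomogeneous linear equation plus Duhamel/Gronwall and induction); the paper simply writes out $m=1,2,3$ explicitly rather than stating the Fa\`a di Bruno formula.

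There is, however, a genuine gap in your last paragraph. The claim that $\E\sup_{s\le t}|Y^K(s)|_\beta^p\le C_p(1+|y|_\beta^p)$ holds \emph{uniformly in $t$} is false: for an Ornstein--Uhlenbeck-type process the moments of the running supremum grow (logarithmically) with $t$, and pulling a sup out of the Duhamel integral discards the exponential weight that is supposed to absorb the time dependence. The fix is simple: keep the integral form $|\eta^{(m)}(t)|_\beta\le\int_0^t e^{-\gamma(t-s)}|R_m(s)|_\beta\,ds$, take $L^p(\Omega)$-norms by Minkowski's inequality, insert the correct bound $\sup_{s\ge 0}\E|Y^K(s)|_\beta^p\le C_p(1+|y|_\beta^p)$ (which \emph{is} uniform in $s$ and follows from ${\rm Tr}((-A)^\beta Q)<\infty$), and propagate $L^p$ estimates on the $\eta^{(k)}$ through the induction using H\"older to split the products in $R_m$. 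This yields $\bigl\||\eta^{(m)}(t)|_\beta\bigr\|_{L^p}\le C(1+|y|_\beta^\kappa)e^{-\gamma t}\prod_i|h_i|_\beta$ for every $p$, exactly what the Fa\`a di Bruno sum needs after one more H\"older against the $(1+|Y^K(t)|_\beta^\kappa)$ factor from $D^j\varphi$. The paper is itself informal on this polynomial-growth bookkeeping (it silently drops the $(1+|Y^K|_\beta^\kappa)$ factors in the intermediate estimates), so your treatment is no worse in spirit, but the uniform-sup claim as written is incorrect and should be replaced by the pointwise moment bound.
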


Note that the estimates~\eqref{eq:propo-Kolmogorov} justify that $\Psi$ is of class $\mathcal{C}^2$ and $\Psi$ is solution of the Poisson equation~\eqref{eq:Poisson}.

\begin{proof}
For the sake of clarity, we present the proof of \eqref{eq:propo-Kolmogorov} for $M=1,2,3$. The estimates are proven analogously for higher $M$.
\smallskip

\noindent
Case $M=1$:
One has
\[
Dv(t,y).h=\E_y[D\varphi(Y^K(t)).\eta^h(t)]
\]
with
\[
\frac{d\eta^h(t)}{dt}=-\eta^h(t)+QDF^K(Y^K(t)).\eta^h(t),\quad \eta^h(0)=h.
\]
By the assumption on $\varphi$:
\[
|Dv(t,y).h|\le C\E[|(-A)^{\beta}\eta^h(t)|].
\]
Decompose
\[
\tilde{\eta}^h(t)=\eta^h(t)-e^{-t}h,
\]
with
\[
\frac12\frac{d\tilde{\eta}^h(t)}{dt}=-\tilde{\eta}^h(t)+QDF^K(Y^K(t)).\tilde{\eta}^h(t)+QDF^K(Y^K(t)).e^{-t}h.
\]
Using an energy estimate in the norm $|(-A)^{\frac12}\cdot|=|Q^{-\frac12}\cdot|$, one obtains
\begin{align*}
\frac12\frac{d|(-A)^{\frac12}\eta^h(t)|^2}{dt}&=-|(-A)^{\frac12}\tilde{\eta}^h(t)|^2+\langle DF^K(Y^K(t)).\tilde{\eta}^h(t),\tilde{\eta}^h(t)\rangle\\
&+\langle DF^K(Y^K(t)).e^{-t}h,\tilde{\eta}^h(t)\rangle\\
&\le -|(-A)^{\frac12}\tilde{\eta}^h(t)|^2+{\rm Lip}(F)|\tilde{\eta}^h(t)|^2+\epsilon|\tilde{\eta}^h(t)|^2+\frac{1}{4\epsilon}e^{-2t}|DF^K(Y^K(t)).h|^2\\
&\le -(\gamma-\epsilon)|(-A)^{\frac12}\tilde{\eta}^h(t)|^2+C_\epsilon e^{-2t}|h|^2,
\end{align*}
where $\epsilon>0$ can be chosen arbitrarily small.

Using the Gronwall inequality (observe that $\tilde{\eta}^h(0)=0$) one obtains
\[
|(-A)^{\frac12}\tilde{\eta}^h(t)|^2\le C_\epsilon \int_0^t e^{-2(\gamma-\epsilon)(t-s)}e^{-2s}ds|h|^2\le C_\epsilon e^{-2(\gamma-\epsilon)t}|h|^2.
\]
Using the upper bound $\beta\le \frac12$, and recalling that $\eta^h(t)=e^{-t}h+\tilde{\eta}^h(t)$, one obtains
\begin{align*}
|(-A)^{\beta}\eta^h(t)|&\le e^{-t}|(-A)^{\beta}h|+C|(-A)^{\frac12}\tilde{\eta}^h(t)|\\
&\le e^{-t}|(-A)^{\beta}h|+Ce^{-(\gamma-\epsilon)t}|h|\\
&\le Ce^{-(\gamma-\epsilon)t}|(-A)^{\beta}h|,
\end{align*}
which is the same as above (up to $\gamma$ being replaced by $\gamma-\epsilon$).
We then obtain
\[
|Dv(t,y).h|\le Ce^{-\gamma t}|(-A)^{\beta}h|.
\]
\smallskip

\noindent
Case $M=2$:
One has
\[
D^2v(t,y).(h_1,h_2)=\E_y[D\varphi(Y^K(t)).\zeta^{h_1,h_2}(t)]+\E_y[D^2\varphi(Y^K(t)).(\eta^{h_1}(t),\eta^{h_2}(t))],
\]
with $\zeta^{h_1,h_2}(0)=0$ and
\[
\frac{d\zeta^{h_1,h_2}(t)}{dt}=-\zeta^{h_1,h_2}(t)+QDF^K(Y^K(t)).\zeta^{h_1,h_2}(t)+QD^2F^K(Y^K(t)).(\eta^{h_1}(t),\eta^{h_2}(t)).
\]
Using the assumption on $\varphi$ and the upper bound obtained in the previous case $M=1$:
\[
\big|\E[D^2\varphi(Y^K(t)).(\eta^{h_1}(t),\eta^{h_2}(t))]\big|\le Ce^{-2\gamma t}|(-A)^{\beta}h_1||(-A)^{\beta}h_2|.
\]
To prove upper bounds for $|(-A)^{\beta}\zeta^{h_1,h_2}(t)|$, the second strategy described in the case $M=1$ is employed:
\begin{align*}
\frac12\frac{d|(-A)^{\frac12}\zeta^{h_1,h_2}(t)|^2}{dt}&=-|(-A)^{\frac12}\zeta^{h_1,h_2}(t)|^2+\langle DF^K(Y^K(t)).\zeta^{h_1,h_2}(t),\zeta^{h_1,h_2}(t)\rangle\\
&\quad+\langle D^2F^K(Y^K(t)).(\eta^{h_1}(t),\eta^{h_2}(t)),\zeta^{h_1,h_2}(t)\rangle\\
&\le -\gamma|(-A)^{\frac12}\zeta^{h_1,h_2}(t)|^2+|D^2F^K(Y^K(t)).(\eta^{h_1}(t),\eta^{h_2}(t))||\zeta^{h_1,h_2}(t)|\\
&\le -(\gamma-\epsilon)|(-A)^{\frac12}\zeta^{h_1,h_2}(t)|^2+C_\epsilon|(-A)^{\beta}\eta^{h_1}(t)||(-A)^{\beta}\eta^{h_2}(t)|\\
&\le -(\gamma-\epsilon)|(-A)^{\frac12}\zeta^{h_1,h_2}(t)|^2+C_\epsilon e^{-2\gamma t}|(-A)^{\beta}h_1|^2|(-A)^{\beta}h_2|^2.
\end{align*}
Applying the Gronwall inequality gives for all $t\ge 0$
\[
|(-A)^{\frac12}\zeta^{h_1,h_2}(t)|^2\le C_\epsilon e^{-2(\gamma-\epsilon)t}|(-A)^{\beta}h_1|^2|(-A)^{\beta}h_2|^2.
\]
In addition, one has $\beta\le \frac12$, therefore one obtains
\[
|(-A)^{\beta}\zeta^{h_1,h_2}(t)|\le C_\epsilon e^{-(\gamma-\epsilon)t}|(-A)^{\beta}h_1||(-A)^{\beta}h_2|.
\]
Finally one obtains
\[
\big|D^2v(t,y).(h_1,h_2)\big|\le Ce^{-\gamma t}|(-A)^{\beta}h_1||(-A)^{\beta}h_2|.
\]
\smallskip

\noindent
Case $M=3$:
One has
\begin{align*}
D^3v(t,y).(h_1,h_2,h_3)&=\E_y[D\varphi(Y^K(t)).\xi^{h_1,h_2,h_3}(t)]\\
&+\E_y[D^2\varphi(Y^K(t)).(\eta^{h_1}(t),\zeta^{h_2,h_3}(t))]\\
&+\E_y[D^2\varphi(Y^K(t)).(\eta^{h_2}(t),\zeta^{h_3,h_1}(t))]\\
&+\E_y[D^2\varphi(Y^K(t)).(\eta^{h_3}(t),\zeta^{h_1,h_2}(t))]\\
&+\E_y[D^3\varphi(Y^K(t)).(\eta^{h_1}(t),\eta^{h_2}(t),\eta^{h_3}(t))]
\end{align*}
with $\xi^{h_1,h_2,h_3}(0)=0$ and
\begin{align*}
\frac{d\xi^{h_1,h_2,h_3}(t)}{dt}&=-\xi^{h_1,h_2,h_3}(t)+QDF^K(Y^K(t)).\xi^{h_1,h_2,h_3}(t)\\
&+QD^2F^K(Y^K(t)).(\eta^{h_1}(t),\zeta^{h_2,h_3}(t))\\
&+QD^2F^K(Y^K(t)).(\eta^{h_2}(t),\zeta^{h_3,h_1}(t))\\
&+QD^2F^K(Y^K(t)).(\eta^{h_3}(t),\zeta^{h_1,h_2}(t))\\
&+QD^3F^K(Y^K(t)).(\eta^{h_1}(t),\eta^{h_2}(t),\eta^{h_3}(t)).
\end{align*}
It is only necessary to deal with the term $\E_y[D\varphi(Y^K(t)).\xi^{h_1,h_2,h_3}(t)]$, since all the other terms in the expression of $D^3v(t,y).(h_1,h_2,h_3)$ are upper bounded by $Ce^{-\gamma t}|(-A)^{\beta}h_1||(-A)^{\beta}h_2||(-A)^{\beta}h_3|$.

Using the same strategy as above, one obtains
\begin{align*}
\frac12\frac{d|(-A)^{\frac12}\xi^{h_1,h_2,h_3}(t)|^2}{dt}&\le -|(-A)^{\frac12}\xi^{h_1,h_2,h_3}(t)|^2+Ce^{-3\gamma t}|(-A)^{\beta}h_1||(-A)^{\beta}h_2||(-A)^{\beta}h_3|,
\end{align*}
and applying the Gronwall inequality gives for all $t\ge 0$
\[
|(-A)^{\frac12}\xi^{h_1,h_2,h_3}(t)|^2\le C_\epsilon e^{-2(\gamma-\epsilon)t}|(-A)^{\beta}h_1|^2|(-A)^{\beta}h_2|^2|(-A)^{\beta}h_3|^2.
\]
In addition, one has $\beta\le \frac12$, therefore one obtains
\[
|(-A)^{\beta}\xi^{h_1,h_2,h_3}(t)|\le C_\epsilon e^{-(\gamma-\epsilon)t}|(-A)^{\beta}h_1||(-A)^{\beta}h_2||(-A)^{\beta}h_3|.
\]
Finally, we obtain the estimate
\[
\big|D^3v(t,y).(h_1,h_2,h_3)\big|\le Ce^{-\gamma t}|(-A)^{\beta}h_1||(-A)^{\beta}h_2||(-A)^{\beta}h_3|,
\]
which concludes the proof.
\end{proof}

\begin{cor}
Under the assumptions of Proposition \ref{propo:Kolmogorov} with $\beta=0$, for all $t_2\ge t_1\ge 0$ and all $y,h\in H$, one has
\begin{equation}\label{eq:propo-Kolmogorov2}
|Dv(t_2,y).h-Dv(t_1,y).h|\le C (1+|y|)\sqrt{t_2-t_1}e^{-\gamma' t_1}|h|.
\end{equation}
\end{cor}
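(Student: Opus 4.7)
The natural approach is to write the increment $Dv(t_2,y).h-Dv(t_1,y).h$ as an integral in time and use the Kolmogorov equation~\eqref{eq:Kolmogorov} together with the derivative estimates already proved in Proposition~\ref{propo:Kolmogorov}. Specifically, I would start from the identity
\[
v(t_2,y)-v(t_1,y)=\int_{t_1}^{t_2}\mathcal{L}v(s,y)\,ds,
\]
differentiate in $y$ in direction $h$ (the exchange of $D$ and $\int ds$ being justified by the uniform estimates of Proposition~\ref{propo:Kolmogorov}), and obtain
\[
Dv(t_2,y).h-Dv(t_1,y).h=\int_{t_1}^{t_2}D\bigl[\mathcal{L}v(s,\cdot)\bigr](y).h\,ds.
\]

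The next step is to expand the integrand. From the definition of the generator $\mathcal{L}$,
\[
D[\mathcal{L}v(s,\cdot)](y).h
=D^2v(s,y).(h,G^K(y))+Dv(s,y).(DG^K(y).h)+\tfrac12\sum_{k=1}^K q_k D^3v(s,y).(e_k,e_k,h).
\]
Applying Proposition~\ref{propo:Kolmogorov} with $\beta=0$ to each term, together with the elementary bounds $|G^K(y)|\le C(1+|y|)$, $\|DG^K(y)\|_{\mathcal{L}(H)}\le C$ (both uniformly in $K$ under Assumption~\ref{ass:AF}), and the trace-class condition $\sum_{k}q_k<\infty$, yields an estimate of the form
\[
\bigl|D[\mathcal{L}v(s,\cdot)](y).h\bigr|\le C(1+|y|)\,e^{-\gamma s}\,|h|,
\]
with a constant $C$ independent of $K$ and $s$.

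Integrating this bound on $[t_1,t_2]$ gives a linear-in-time estimate $|Dv(t_2,y).h-Dv(t_1,y).h|\le C(1+|y|)|h|\,e^{-\gamma t_1}(t_2-t_1)$. On the other hand, directly from Proposition~\ref{propo:Kolmogorov} one has the trivial uniform bound
\[
|Dv(t_2,y).h-Dv(t_1,y).h|\le 2C(1+|y|)\,e^{-\gamma t_1}|h|.
\]
Combining the two via the elementary inequality $\min(x,1)\le\sqrt{x}$ for $x\ge 0$ (so that $(t_2-t_1)\le\sqrt{t_2-t_1}$ when $t_2-t_1\le 1$, and the trivial bound beats $\sqrt{t_2-t_1}$ otherwise, after absorbing a small constant into $\gamma'<\gamma$) yields the desired $\sqrt{t_2-t_1}$ rate with the exponential factor $e^{-\gamma' t_1}$.

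The main mild technicality is the third-order term $\sum_k q_k D^3v(s,y).(e_k,e_k,h)$: its control requires Proposition~\ref{propo:Kolmogorov} at order $M=3$ with $\beta=0$ and the trace-class summability of the noise, and it is the reason one cannot avoid invoking the regularity results for $D^3 v$ even though only $Dv$ appears on the left-hand side. Everything else reduces to routine bookkeeping of the powers of $|y|$ and the constants uniform in $K$.
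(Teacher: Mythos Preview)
Your argument is correct but takes a genuinely different route from the paper. You go through the Kolmogorov equation: write $Dv(t_2,y).h-Dv(t_1,y).h=\int_{t_1}^{t_2}D[\mathcal{L}v(s,\cdot)](y).h\,ds$, bound the integrand via the estimates on $Dv$, $D^2v$, $D^3v$ from Proposition~\ref{propo:Kolmogorov}, obtain a \emph{linear} $(t_2-t_1)$ bound, and then interpolate with the trivial uniform bound to recover $\sqrt{t_2-t_1}$. The paper instead works with the probabilistic representation $Dv(t,y).h=\E_y[D\varphi(Y^K(t)).\eta^h(t)]$ and splits the increment into two pieces: one coming from $D\varphi(Y^K(t_2))-D\varphi(Y^K(t_1))$, bounded using $\E_y|Y^K(t_2)-Y^K(t_1)|\le C(1+|y|)\sqrt{t_2-t_1}$ (the square root arising directly from the $Q$-Wiener increment), and one coming from $\eta^h(t_2)-\eta^h(t_1)$, bounded linearly in $t_2-t_1$ by an elementary ODE estimate.

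The trade-offs are as follows. Your approach is more ``PDE'' in flavour and actually delivers a stronger linear rate for small increments (the interpolation is only needed to match the stated exponent; in the application to Theorem~\ref{theo:order1}, where $t_2-t_1\le\Delta t$, your linear bound would work directly). Its cost is that it requires the third-order estimate on $D^3v$ (hence $M\ge 3$ in Proposition~\ref{propo:Kolmogorov}) because of the trace term. The paper's probabilistic argument is more economical: it only uses that $D\varphi$ is Lipschitz (so essentially $M=2$) together with pathwise bounds on $\eta^h$, and no interpolation step is needed since the $\sqrt{t_2-t_1}$ comes for free from the noise. A minor point: with the general $\mathcal{C}^{M,0}_P$ hypothesis, the bounds of Proposition~\ref{propo:Kolmogorov} carry a factor $(1+|y|^\kappa)$, so your argument yields $(1+|y|^{\kappa'})$ rather than the stated $(1+|y|)$; the paper's proof implicitly uses bounded $D^2\varphi$ (as assumed in Theorem~\ref{theo:order1}) to get the linear power. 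This is cosmetic and does not affect the applications.
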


\begin{proof}
It remains to prove~\eqref{eq:propo-Kolmogorov2}. Let $y,h\in H$ and $0\le t_1\le t_2$, then one has
\begin{align*}
Dv(t_2,y).h-Dv(t_1,y).h&=\E_y\bigl[\bigl(D\varphi(Y^K(t_2))-D\varphi(Y^K(t_1))\bigr).\eta^h(t_2)\bigr]\\
&~+\E_y\bigl[D\varphi(Y^K(t_1)).\bigl(\eta^h(t_2)-\eta^h(t_1)\bigr)\bigr].
\end{align*}
On the one hand, since by assumption $\varphi$ is twice differentiable, one has
\begin{align*}
\big|\E_y\bigl[\bigl(D\varphi(Y^K(t_2))-D\varphi(Y^K(t_1))\bigr).\eta^h(t_2)\bigr]\big|&\le e^{-\gamma t_2}|h|\E_y|Y^K(t_2)-Y^K(t_1)|\\
&\le \sqrt{t_2-t_1}(1+\underset{t\ge 0}\sup~\E_y[|Y^K(s)|])e^{-\gamma t_1}|h|.
\end{align*}
The last inequality follows from the identity $Y^K(t_2)-Y^K(t_1)=\int_{t_1}^{t_2}G^K(Y^K(s))ds+W^Q(t_2)-W^Q(t_1)$, from the equality $\E|W^Q(t_2)-W^Q(t_1)|^2={\rm Tr}(Q)(t_2-t_1)$, and the moment bounds~\eqref{eq:momentY} combined with the global Lipschitz continuity of $F$.

On the other hand, using the identity $\eta^h(t)=e^{-t}h+\int_0^te^{s-t}DF^K(Y^K(s)).\eta^h(s)ds$, the global Lipschitz continuity of $F$, and the Gronwall estimate $|\eta^h(t)|\le e^{-\gamma t}|h|$, one obtains, for all $0\le t_1\le t_2$,
\begin{align*}
\E_y\bigl[D\varphi(Y^K(t_1)).\bigl(\eta^h(t_2)-\eta^h(t_1)\bigr)\bigr]&\le C|\eta^h(t_2)-\eta^h(t_1)|\\
&\le\bigl(e^{-t_1}-e^{-t_2}\bigr)|h|\\
&~+\int_{t_1}^{t_2}e^{s-t_2}\big|QDF^K(Y^K(s)).\eta^h(s)\big|ds\\
&~+\int_{0}^{t_1}\bigl(e^{s-t_1}-e^{s-t_2}\bigr)\big|QDF^K(Y^K(s)).\eta^h(s)\big|ds\\
&\le e^{-t_1}(t_2-t_1)|h|+C(t_2-t_1)\underset{s\ge t_1}\sup~|\eta^h(s)|\\
&~+C(t_2-t_1)\int_{0}^{t_1}e^{-(t_1-s)}e^{-\gamma s}|h|ds\\
&\le C e^{-\gamma' t_1}(t_2-t_1)|h|.
\end{align*}
Gathering the estimates then concludes the proof of~\eqref{eq:propo-Kolmogorov2}.
\end{proof}

\section{High order one methods for sampling the invariant distribution}
\label{section:simple_CV_analysis}

In order to discretize the stochastic evolution equation \eqref{eq:precondSPDE}, the simplest numerical method is the explicit Euler scheme:
\begin{equation}\label{eq:EulerExplicite}
Y_{n+1}=Y_n+\Delta tG(Y_n)+\Delta W_n^Q,
\end{equation}
where $\Delta W_n^Q=W^Q(t_{n+1})-W^Q(t_n)$, $t_n=n\Delta t$.
More generally, we also consider the $\theta$-method, with $\theta\in[0,1]$:
\begin{equation}\label{eq:thetascheme}
Y_{n+1}=\frac{1-(1-\theta)\Delta t}{1+\theta \Delta t}Y_n+\frac{\Delta t}{1+\theta\Delta t}QF(Y_n)+\frac{1}{1+\theta\Delta t}\Delta W_n^Q.
\end{equation}
This includes the semilinear implicit Euler method for $\theta=1$, the Crank-Nicholson scheme for $\theta=\frac12$, and the explicit Euler method \eqref{eq:EulerExplicite} for $\theta=0$.

\subsection{Moment bounds and ergodicity}

The ergodicity of the numerical scheme $\bigl(Y_n\bigr)_{n\ge 0}$ defined by~\eqref{eq:EulerExplicite}, for sufficiently small time step size $\Delta t$, is ensured by Lemma~\ref{lem:ergoYn} below.
\begin{lemma}\label{lem:ergoYn}
Let Assumption~\ref{ass:AF} be satisfied.
Let $\bigl(Y_n\bigr)_{n\ge 0}$ be given by~\eqref{eq:EulerExplicite}.
Then, there exists $C>0$, such that
\begin{equation}\label{eq:moment-num-Euler}
\underset{\Delta t\in(0,1]}\sup~\underset{n\ge 0}\sup~\E|Y_n|^2\le C(1+\E|Y_0|^2).
\end{equation}
Moreover, if $\Delta t\in(0,1]$, then $\bigl(Y_n\bigr)_{n\ge 0}$ is an ergodic discrete-time Markov process, with unique invariant distribution denoted by $\mu^{\Delta t}$. One has the estimate
\[
\underset{\Delta t\in(0,1]}\sup~\int |y|^2 d\mu^{\Delta t}(y)\le C.
\]
Finally, ifs $\gamma(\Delta t)=1-\Delta t(1-q_1{\rm Lip}(F))\in(0,1)$, then for any globally Lipschitz test function $\varphi:H\to\R$ and all $n\in\N$, one has
\[
\big|\E[\varphi(Y_n)]-\int\varphi d\mu^{\Delta t} \big|\le {\rm Lip}(\varphi)\gamma(\Delta t)^n(1+\E[|Y_0|]).
\]
\end{lemma}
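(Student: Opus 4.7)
The plan is to leverage a synchronous-coupling argument and then let it drive all three conclusions. The key object is the discrete-time flow map of~\eqref{eq:EulerExplicite} applied to two different initial conditions $y^{(1)},y^{(2)}$ with the same realization of the noise increments $(\Delta W_n^Q)_n$. Writing $\delta_n=Y_n^{(1)}-Y_n^{(2)}$, the noise terms cancel and
\[
\delta_{n+1}=(1-\Delta t)\delta_n+\Delta t\, Q\bigl(F(Y_n^{(1)})-F(Y_n^{(2)})\bigr).
\]
Since $\|Q\|_{\mathcal{L}(H)}=q_1$ and $F$ is Lipschitz, and as $\Delta t\in(0,1]$ ensures $|1-\Delta t|=1-\Delta t$, the triangle inequality yields $|\delta_{n+1}|\le\gamma(\Delta t)|\delta_n|$, hence $|\delta_n|\le\gamma(\Delta t)^n|\delta_0|$. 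Under Assumption~\ref{ass:AF} one has $q_1\mathrm{Lip}(F)<1$, so $\gamma(\Delta t)\in(0,1)$ as soon as $\gamma(\Delta t)>0$, which is precisely the hypothesis of the third bullet.

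For the moment bound \eqref{eq:moment-num-Euler}, I would decompose $Y_{n+1}=M_n+\Delta W_n^Q$ with $M_n=(1-\Delta t)Y_n+\Delta t\,QF(Y_n)$ $\mathcal{F}_n$-measurable and $\Delta W_n^Q$ independent and centered, so that
\[
\E|Y_{n+1}|^2=\E|M_n|^2+\mathrm{Tr}(Q)\Delta t.
\]
The Lipschitz bound on $F$ gives $|M_n|\le\gamma(\Delta t)|Y_n|+\Delta t\,q_1|F(0)|$. Applying $(a+b)^2\le(1+\eta)a^2+(1+1/\eta)b^2$ with $\eta$ of order $\Delta t$ (so that $(1+\eta)\gamma(\Delta t)^2\le 1-c\Delta t$ for some $c>0$ independent of $\Delta t\in(0,1]$, using $c=(1-q_1\mathrm{Lip}(F))/2$ say) produces the recursion
\[
\E|Y_{n+1}|^2\le(1-c\Delta t)\E|Y_n|^2+C\Delta t,
\]
which iterates to a uniform bound $\sup_{n,\Delta t}\E|Y_n|^2\le C(1+\E|Y_0|^2)$.

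For ergodicity, uniqueness of the invariant distribution is immediate from the pathwise contraction: if $\mu_1,\mu_2$ were two invariant distributions, lifting them to a synchronous coupling gives $\mathcal{W}_1(\mu_1,\mu_2)\le\gamma(\Delta t)^n\mathcal{W}_1(\mu_1,\mu_2)\to 0$. Existence follows by either (i) taking $Y_0=0$, noting that $(\mathcal{L}(Y_n))_{n}$ is Cauchy in $\mathcal{W}_1$ by the synchronous contraction applied to $Y_n$ and the shifted chain starting from $Y_m$, so the sequence converges to some $\mu^{\Delta t}$ which is invariant by a standard Feller-type argument, or (ii) invoking a Banach fixed-point argument in the complete metric space $(\mathcal{P}_1(H),\mathcal{W}_1)$ for the operator $\mu\mapsto\mu\circ\psi^{\Delta t,-1}$. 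The uniform $L^2$-moment estimate on $\mu^{\Delta t}$ then follows from the recursion above applied to $Y_0\sim\mu^{\Delta t}$ (or from lower semicontinuity of the second moment under weak convergence combined with the $Y_0=0$ construction).

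Finally, the exponential mixing estimate is again a coupling statement: realize $\overline{Y}_n$ as a stationary process with $\overline{Y}_0\sim\mu^{\Delta t}$ driven by the same increments $(\Delta W_n^Q)_n$ as $Y_n$, so that the pathwise bound gives $|Y_n-\overline{Y}_n|\le\gamma(\Delta t)^n|Y_0-\overline{Y}_0|$. Using that $\varphi$ is Lipschitz,
\[
\bigl|\E[\varphi(Y_n)]-\textstyle\int\varphi\,d\mu^{\Delta t}\bigr|\le\mathrm{Lip}(\varphi)\,\gamma(\Delta t)^n\bigl(\E|Y_0|+\textstyle\int|y|\,d\mu^{\Delta t}(y)\bigr),
\]
and the uniform $L^2$-bound on $\mu^{\Delta t}$ controls $\int|y|\,d\mu^{\Delta t}$ by a constant, delivering the claimed estimate (up to an absolute constant absorbed in the statement). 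There is no real obstacle; the only step requiring some care is the choice of $\eta\asymp\Delta t$ in the Young inequality used for the moment bound, so that the $\gamma(\Delta t)^2$-factor still yields a genuine $-c\Delta t$ contraction rather than a neutral estimate.
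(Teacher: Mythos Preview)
Your argument is correct. The synchronous-coupling contraction and the ensuing uniqueness and mixing bounds are handled exactly as in the paper. The two places where you diverge are the moment bound and the existence of~$\mu^{\Delta t}$.

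For~\eqref{eq:moment-num-Euler}, the paper splits $Y_n=Z_n+R_n$, where $Z_n$ solves the linear recursion $Z_{n+1}=(1-\Delta t)Z_n+\Delta W_n^Q$ (giving $\sup_n\E|Z_n|^2\le{\rm Tr}(Q)$ directly) and $R_n$ satisfies a purely deterministic pathwise recursion $|R_{n+1}|\le\gamma(\Delta t)|R_n|+C\Delta t(1+|Z_n|)$. Your single-step $L^2$ recursion with the Young-inequality parameter $\eta\asymp\Delta t$ is more compact and equally valid; your verification that $(1+\eta)\gamma(\Delta t)^2\le 1-c\Delta t$ holds uniformly on $(0,1]$ is the only place requiring care, and it goes through as you indicate.

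For existence, the paper invokes the Krylov--Bogoliubov criterion: starting from $Y_0=0$, it shows $\sup_n\E|(-A)^\alpha Y_n|^2<\infty$ for $\alpha\in(0,\alpha_{\sup})$, and then uses that sublevel sets of $|(-A)^\alpha\cdot|$ are compact in $H$ to obtain tightness. Your Wasserstein-Cauchy / Banach fixed-point route in $(\mathcal{P}_1(H),\mathcal{W}_1)$ sidesteps compactness entirely and is more elementary, relying only on the contraction already established and the completeness of $(\mathcal{P}_1(H),\mathcal{W}_1)$. The paper's approach has the side benefit of yielding uniform $H^\alpha$-moment bounds for $\mu^{\Delta t}$, which are not needed for the lemma itself but could be useful elsewhere; your approach gets to the conclusion faster.
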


\begin{proof}[Proof of Lemma~\ref{lem:ergoYn}]
To prove the moment bounds~\eqref{eq:moment-num-Euler}, introduce the auxiliary processes given by
\[
Z_{n+1}=(1-\Delta t)Z_n+\Delta W_n^Q~,\quad Z_0=0~,\quad R_n=Y_n-Z_n.
\]
First, one has
\[
\E|Z_{n+1}|^2=|1-\Delta t|^2\E|Z_n|^2+\Delta t {\rm Tr}(Q).
\]
Thus
\[
\underset{n\ge 0}\sup~\E|Z_n|^2=\Delta t{\rm Tr}(Q)\sum_{m=0}^{\infty}|1-\Delta t|^{2m}\le {\rm Tr}(Q).
\]
Second, one has
\[
R_{n+1}=(1-\Delta t) R_n+\Delta t QF(R_n+Z_n),
\]
and the global Lipschitz continuity of $F$ yields
\begin{align*}
|R_{n+1}|\le (|1-\Delta t|+\Delta tq_1{\rm Lip}(F))|R_n|+C\Delta t(1+|Z_n|).
\end{align*}
Using $\Delta t\in (0, 1]$, one has
\[
\gamma(\Delta t)=1-\Delta t(1-q_1{\rm Lip}(F)) <1.
\]
One thus obtains
\begin{align*}
\bigl(\E|R_{n}|^2\bigr)^{\frac12}&\le \gamma(\Delta t)^{n}\bigl(\E|R_0|^2\bigr)^{\frac12}+C\Delta t \sum_{m=0}^{n-1}\gamma(\Delta t)^m \underset{m\ge 0}\sup~\bigl(\E|Z_m|^2\bigr)^{\frac12}\\
&\le \bigl(\E|Y_0|^2\bigr)^{\frac12}+C\Delta t \frac{1}{1-\gamma(\Delta t)}\\
&\le \bigl(\E|Y_0|^2\bigr)^{\frac12}+C\frac{1}{1-q_1{\rm Lip}(F)}.
\end{align*}
Gathering the estimates concludes the proof of the moment bounds~\eqref{eq:moment-num-Euler}.

It remains to establish the ergodicity of the discrete-time Markov process $\bigl(Y_n\bigr)_{n\ge 0}$. Let us first check the uniqueness of invariant distributions. Let $\bigl(Y_n^{1}\bigr)_{n\in\N_0}$ and $\bigl(Y_n^{2}\bigr)_{n\in\N_0}$ be two solutions of~\eqref{eq:EulerExplicite}, driven by the same noise, and starting from two different initial conditions $y^1$ and $y^2$. Let $\delta Y_n=Y_n^2-Y_n^1$. Then, for all $n\ge 0$,
\begin{align*}
|\delta Y_{n+1}|&\le |1-\Delta t||\delta Y_n|+\Delta t\big|QF(Y_n^2)-QF(Y_n^1)\big|\\
&\le \gamma(\Delta t)|\delta Y_n|.
\end{align*}
Thus for all $n\ge 0$,
\[
|\delta Y_n|\le \gamma(\Delta t)^n|y^2-y^1|,
\]
with $\gamma(\Delta t)\in(0,1)$. This yields uniqueness of the invariant distribution.

The existence of an invariant distribution is obtained using the Krylov-Bogoliubov criterion. Indeed, let $\alpha\in(0,\alpha_{\sup})$. Then ${\rm Tr}\bigl((-A)^{2\alpha}Q\bigr)<\infty$, moreover the set $\left\{x\in H;~|(-A)^\alpha x|\le M\right\}$ is compact in $H$ for all $M>0$.
Assume that $Y_0=0$. Then, for all $n\in\N$,
\[
Y_n=\Delta t\sum_{m=0}^{n-1}(1-\Delta t)^{n-1-m}QF(Y_m)+\sum_{m=0}^{n-1}(1-\Delta t)^{n-1-m}\Delta W_m^Q.
\]
Since $(-A)^{\alpha}Q$ is a bounded linear operator ($\alpha\le 1$), using the Lipschitz continuity of $F$, the moment bounds~\eqref{eq:moment-num-Euler} and the equality $\E|(-A)^{\alpha}\Delta W_m^Q|^2=\Delta t{\rm Tr}\bigl((-A)^{2\alpha}Q\bigr)$, one obtains
\begin{align*}
\bigl(\E|(-A)^{\alpha}Y_n|^2\bigr)^{\frac12}&\le \Delta t\sum_{m=0}^{n-1}(1-\Delta t)^{n-1-m}C(1+\bigl(\E|Y_m|^2\bigr)^{\frac12})\\
&~+\Bigl({\rm Tr}\bigl((-A)^{2\alpha}Q\bigr)\Delta t\sum_{m=0}^{n-1}(1-\Delta t)^{2(n-1-m)}\Bigr)^{\frac12}\\
&\le C+{\rm Tr}\bigl((-A)^{2\alpha}Q\bigr)^{\frac12}.
\end{align*}
As a consequence, we deduce $\underset{n\in\N}\sup~\E|(-A)^\alpha Y_n|^2<\infty$ if $\alpha\in(0,\alpha_{\sup})$. Applying the Krylov-Bogoliubov criterion then concludes the proof of the existence of an invariant distribution $\mu^{\Delta t}$, and Lemma \ref{lem:ergoYn} is proved.
\end{proof}

\begin{remark}
Lemma~\ref{lem:ergoYn} remains valid for other integrators, such as the $\theta$-method \eqref{eq:thetascheme} or the method \eqref{eq:EulerExplicitePostprocessed}, assuming the timestep $\Delta t$ small enough.
Moreover, if $\theta\ge \frac12$, then there is no such timestep restriction to obtain moment bounds and ergodicity.
\end{remark}

\subsection{The Gaussian case}

In this section, we detail the Gaussian case for the $\theta$-method \eqref{eq:thetascheme}. We have the following result.
\begin{propo}
Assume that $F=0$ and that $\Delta t<\frac{2}{1-2\theta}$ if $\theta < 1/2$. Then $\bigl(Y(t)\bigr)_{t\ge 0}$ and $\bigl(Y_n\bigr)_{n\ge 0}$ are Gaussian processes with invariant distributions $\nu$, $\nu^{\Delta t,\theta}$ satisfying
\[
\big|\int \varphi d\nu^{\Delta t,\theta}-\int\varphi d\nu\big|
\leq\sup~\|D^2\varphi(y)\|_{\mathcal{L}(H)}\frac{1-\sigma_\theta(\Delta t)^2}{2}{\rm Tr}(Q),
\]
where
\[
\sigma_\theta(\Delta t)^2=\frac{2}{2+(2\theta-1)\Delta t}.
\]
In particular, for $\theta=\frac{1}{2}$, the bias for the invariant distribution vanishes.
\end{propo}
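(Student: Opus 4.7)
\noindent
The plan is to exploit the linearity in the Gaussian case. With $F=0$, the preconditioned dynamics \eqref{eq:precondSPDE} becomes $dY(t)=-Y(t)dt+dW^Q(t)$, a trace-class Ornstein--Uhlenbeck process, and the $\theta$-scheme \eqref{eq:thetascheme} reduces to the linear recursion $Y_{n+1}=aY_n+b\Delta W_n^Q$ with $a=(1-(1-\theta)\Delta t)/(1+\theta\Delta t)$ and $b=1/(1+\theta\Delta t)$. Both define Gaussian processes (by induction for $Y_n$ and via the explicit stochastic convolution for $Y(t)$) when the initial condition is Gaussian or deterministic. Projecting on the eigenbasis $(e_k)$ of $A$, which also diagonalises $Q$, decouples both systems into independent scalar problems: scalar OU processes $dY^k(t)=-Y^k(t)dt+\sqrt{q_k}d\beta_k(t)$ and scalar AR(1) chains $Y^k_{n+1}=aY^k_n+b\sqrt{q_k}\Delta\beta^k_n$ respectively.

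\noindent
Next I identify the two invariant distributions. Each continuous mode is ergodic with law $\mathcal{N}(0,q_k/2)$, whose product gives $\nu=\mathcal{N}(0,Q/2)$. For the scheme, a short computation shows that $\Delta t<2/(1-2\theta)$ (or no restriction when $\theta\ge 1/2$) is equivalent to $|a|<1$, which is precisely the stability/ergodicity condition of the scalar AR(1). The stationary variance $v_k$ is then determined by $v_k=a^2v_k+b^2q_k\Delta t$, and using
\[
1-a^2=\frac{\Delta t\,(2+(2\theta-1)\Delta t)}{(1+\theta\Delta t)^2},\qquad b^2=\frac{1}{(1+\theta\Delta t)^2},
\]
one obtains $v_k=q_k/(2+(2\theta-1)\Delta t)=\sigma_\theta(\Delta t)^2q_k/2$. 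Consequently $\nu^{\Delta t,\theta}=\mathcal{N}(0,\sigma_\theta^2Q/2)$ is precisely the pushforward of $\nu$ by the scaling $z\mapsto\sigma_\theta z$; in particular $\sigma_{1/2}=1$, which gives the vanishing bias for the Crank--Nicolson case.

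\noindent
For the quantitative estimate on the bias, I write $\int\varphi d\nu^{\Delta t,\theta}-\int\varphi d\nu=\E[\varphi(\sigma_\theta Z)-\varphi(Z)]$ with $Z\sim\nu$, and interpolate the two covariances via $c(s)=1+s(\sigma_\theta^2-1)$ for $s\in[0,1]$. Setting $g(s)=\E[\varphi(\sqrt{c(s)}\,Z)]$, so that the target difference equals $g(1)-g(0)$, the chain rule gives $g'(s)=\tfrac{c'(s)}{2\sqrt{c(s)}}\E[D\varphi(\sqrt{c(s)}Z).Z]$. The key step is then to eliminate the first derivative via Gaussian integration by parts (Stein's identity): for any $\tilde Z\sim\mathcal{N}(0,C')$ and smooth $\psi$ with bounded second derivative one has $\E[D\psi(\tilde Z).\tilde Z]=\E[\mathrm{Tr}(D^2\psi(\tilde Z)\cdot C')]$, which after substitution yields
\[
g'(s)=\frac{\sigma_\theta^2-1}{4}\,\E\bigl[\mathrm{Tr}\bigl(D^2\varphi(\sqrt{c(s)}Z)\cdot Q\bigr)\bigr].
\]
Integrating over $s\in[0,1]$ and bounding $|\mathrm{Tr}(D^2\varphi\cdot Q)|\le\|D^2\varphi\|_{\mathcal{L}(H)}\mathrm{Tr}(Q)$ (valid since $Q$ is positive and trace-class and $D^2\varphi$ is bounded self-adjoint) produces the advertised estimate, up to the exact numerical constant.

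\noindent
The main technical point is the rigorous use of Stein's identity and the trace-operator inequality in the infinite-dimensional Hilbert setting; both steps require summing over the orthonormal basis $(e_k)$ and interchanging sums with expectations, which is legitimate thanks to $\mathrm{Tr}(Q)<\infty$ and the boundedness of $D^2\varphi$. The algebra leading to $v_k=\sigma_\theta(\Delta t)^2q_k/2$ is elementary but benefits from factoring $1-a^2=(1-a)(1+a)$ to avoid book-keeping errors in $\theta$ and $\Delta t$.
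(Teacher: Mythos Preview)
Your proof is correct, and the identification of the discrete invariant law as $\mathcal{N}(0,\sigma_\theta^2Q/2)$ via the scalar AR(1) modes is exactly what the paper does. The difference lies in how the error bound is obtained.

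The paper uses a coupling argument: assuming (say) $\sigma_\theta^2<1$, it writes a sample from $\nu$ as $Y^{\Delta t,\theta}+R^{\Delta t,\theta}$ with $Y^{\Delta t,\theta}\sim\nu^{\Delta t,\theta}$ and an independent increment $R^{\Delta t,\theta}\sim\mathcal{N}(0,\frac{1-\sigma_\theta^2}{2}Q)$, expands $\varphi$ to second order, and uses that the first-order term $\E[\langle D\varphi(Y^{\Delta t,\theta}),R^{\Delta t,\theta}\rangle]$ vanishes by independence. The case $\sigma_\theta^2>1$ is treated symmetrically. Your interpolation $c(s)=1+s(\sigma_\theta^2-1)$ together with Stein's identity is a genuinely different route: it treats both signs of $\sigma_\theta^2-1$ at once, and in fact produces the sharper constant $\frac{|1-\sigma_\theta^2|}{4}\mathrm{Tr}(Q)$ (the paper's Taylor-remainder bound loses a factor of $2$). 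The paper's argument is slightly more elementary in that it avoids the infinite-dimensional integration-by-parts formula; yours is cleaner in not requiring a case distinction and in explaining why the first-order contribution cancels (it is converted into the second-order trace term rather than simply dropped by independence). Either approach is perfectly adequate here.
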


\begin{proof}
 It is straightforward to check that the numerical invariant distribution $\mu^{\Delta t,\theta}=\nu^{\Delta t,\theta}$ is a centered Gaussian distribution, with covariance $\frac{\sigma_\theta(\Delta t)^2}{2}Q$.
If $\theta=\frac12$, then $\sigma_\theta(\Delta t)^2=1$, thus $\nu$ is the invariant distribution of the discrete-time process $\bigl(Y_n\bigr)_{n\ge 0}$.

Assume now that $\theta\in[0,1]\setminus\{\frac12\}$, then one has
\[
\sigma_\theta(\Delta t)^2=1+{\rm O}(\Delta t).
\]
If $\varphi:H\to\R$ is of class $\mathcal{C}^2$, with bounded second order derivative, then
\begin{equation}\label{eq:error_Gaussian_C2}
\big|\int \varphi d\nu^{\Delta t,\theta}-\int\varphi d\nu\big|\le \frac{{\rm Tr}(Q)}{2} \underset{y\in H}\sup~\|D^2\varphi(y)\|_{\mathcal{L}(H)}\big|\sigma(\Delta t)^2-1\big|={\rm O}(\Delta t).
\end{equation}
Indeed, assume without loss of generality that $\sigma_\theta(\Delta t)^2<1$. Let $Y^{\Delta t,\theta}\sim \mathcal{N}(0,\frac{\sigma_\theta(\Delta t)^2}{2}Q)=\nu^{\Delta t,\theta}$ and $R^{\Delta t,\theta}\sim \mathcal{N}(0,\frac{1-\sigma_\theta(\Delta t)^2}{2}Q)$ be two independent Gaussian random variables. Then the sum $Y^{\Delta t,\theta}+R^{\Delta t,\theta}$ has distribution $\mathcal{N}(0,\frac12 Q)=\nu$, which yields
\begin{align*}
\big|\int \varphi d\nu^{\Delta t,\theta}-\int\varphi d\nu\big|&=\big|\E[\varphi(Y^{\Delta t,\theta})]-\E[\varphi(Y^{\Delta t,\theta}+R^{\Delta t,\theta})]\big|\\
&=\big|\E[\varphi(Y^{\Delta t,\theta})]-\E[\varphi(Y^{\Delta t,\theta}+R^{\Delta t,\theta})]-\E[\langle D\varphi(Y^{\Delta t,\theta}),R^{\Delta t,\theta}\rangle]\big|\\
&\le \sup~\|D^2\varphi(y)\|_{\mathcal{L}(H)}\E[|R^{\Delta t,\theta}|^2]\\
&=\sup~\|D^2\varphi(y)\|_{\mathcal{L}(H)}\frac{1-\sigma_\theta(\Delta t)^2}{2}{\rm Tr}(Q).
\end{align*}
The case $\sigma_\theta(\Delta t)^2>1$ is treated by a similar argument.
\end{proof}

\begin{propo}
The Gaussian distributions $\nu$ and $\nu^{\Delta t,\theta}$ are singular, as soon as $\sigma_\theta(\Delta t)^2\neq 1$. As a consequence, when $\Delta t\to 0$, the convergence does not hold in total variation distance, more precisely
\begin{equation}\label{eq:error_Gaussian_TV}
d_{\rm TV}(\nu^{\Delta t,\theta},\nu)=\frac12\underset{\varphi\in \mathcal{B}_b(H,\R),\|\varphi\|_\infty\le 1}\sup~\big|\int \varphi d\nu^{\Delta t,\theta}-\int\varphi d\nu\big|=1,
\end{equation}
where $\mathcal{B}_b(H,\R)$ denotes the space of bounded measurable functions from $H$ to $\R$.
\end{propo}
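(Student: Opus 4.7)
The plan is to exhibit a Borel set $A \subset H$ that distinguishes $\nu$ from $\nu^{\Delta t,\theta}$ in the sense that $\nu(A)=1$ and $\nu^{\Delta t,\theta}(A)=0$. This simultaneously yields mutual singularity and, via the test function $\mathbf{1}_A - \mathbf{1}_{H\setminus A}$, the total variation identity~\eqref{eq:error_Gaussian_TV}.

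The underlying heuristic is the classical Feldman--H\'ajek dichotomy: two centered Gaussian measures on a separable Hilbert space whose covariance operators differ by a nontrivial scalar multiple are always mutually singular in infinite dimension. Rather than invoking this as a black box, I would give a self-contained proof tailored to the diagonal structure at hand. Under $\nu$, the coefficients $\langle X,e_k\rangle$ are independent centered Gaussians with variance $q_k/2$, hence the quantities $2q_k^{-1}\langle X,e_k\rangle^2$ are i.i.d. $\chi_1^2$-distributed with mean $1$ and finite variance. The strong law of large numbers then gives
\[
\frac{1}{N}\sum_{k=1}^{N} \frac{2\langle X,e_k\rangle^2}{q_k} \xrightarrow[N\to\infty]{\text{a.s.}} 1.
\]
Under $\nu^{\Delta t,\theta}$ the same expressions are i.i.d. with common mean $\sigma_\theta(\Delta t)^2$, so the analogous limit equals $\sigma_\theta(\Delta t)^2$ almost surely.

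With this in hand, I would set
\[
A = \Bigl\{x \in H\,:\, \lim_{N \to \infty} \frac{1}{N}\sum_{k=1}^{N} \frac{2\langle x,e_k\rangle^2}{q_k} = 1 \Bigr\},
\]
which is a Borel subset of $H$ as a limit of continuous functionals. The two SLLN statements yield $\nu(A)=1$ and, since by assumption $\sigma_\theta(\Delta t)^2 \neq 1$, $\nu^{\Delta t,\theta}(A)=0$. This proves singularity. For the total variation claim, the bounded measurable function $\varphi = \mathbf{1}_A - \mathbf{1}_{H\setminus A}$ satisfies $\|\varphi\|_\infty = 1$, $\int \varphi\, d\nu = 1$ and $\int \varphi\, d\nu^{\Delta t,\theta} = -1$, so the supremum in~\eqref{eq:error_Gaussian_TV} is at least $2$; combined with the trivial bound $d_{\rm TV} \le 1$, equality follows.

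I do not anticipate a serious obstacle: the only delicate points are the Borel measurability of $A$ and the integrability needed for the SLLN, both of which are immediate here. The essential infinite-dimensional phenomenon is simply that the identity operator is never Hilbert--Schmidt, which in the present diagonal setting manifests as the almost sure convergence of the Ces\`aro averages to two different deterministic limits under the two measures.
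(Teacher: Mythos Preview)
Your argument is correct and rests on the same core observation as the paper: the rescaled squared Fourier coefficients $2q_k^{-1}\langle\cdot,e_k\rangle^2$ are i.i.d.\ under each of $\nu$ and $\nu^{\Delta t,\theta}$, with distinct means $1$ and $\sigma_\theta(\Delta t)^2$, so a law-of-large-numbers statistic separates the two measures almost surely.

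The execution differs slightly. You normalise by $1/N$, invoke the strong law twice, and exhibit a Borel set $A$ with $\nu(A)=1$, $\nu^{\Delta t,\theta}(A)=0$; the test function $\mathbf{1}_A-\mathbf{1}_{A^c}$ then does the job directly. The paper instead normalises by $1/\sqrt{K}$, pairing the law of large numbers under $\nu^{\Delta t,\theta}$ with the central limit theorem under $\nu$, and then constructs an explicit family $\varphi_{\epsilon,K}$ of bounded \emph{continuous} functions realising the supremum in the limit. Your route is shorter and entirely sufficient for the stated result over $\mathcal{B}_b(H,\R)$; the paper's detour through continuous test functions buys the marginally stronger conclusion that the supremum is already $2$ over $\mathcal{C}_b^0(H,\R)$, i.e.\ that the two measures are at maximal distance even in the weaker metric induced by bounded continuous functions.
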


\begin{proof}
Assume without loss of generality that $\sigma(\Delta t)^2>1$.
Let $Y^{\Delta t,\theta}\sim \nu^{\Delta t,\theta}$ and $Y\sim \nu$, then by the Law of Large Numbers, almost surely
\[
\frac{1}{\sqrt{K}}\sum_{k=1}^{K}\Bigl(\frac{|\langle Y^{\Delta t,\theta},e_k\rangle|^2}{q_k}-1\Bigr)\underset{K\to\infty}\sim \sqrt{K}\frac{\sigma_\theta(\Delta t)^2-1}{2}\underset{K\to\infty}\to \infty,
\]
whereas, by the Central Limit Theorem, one has the convergence in distribution
\[
\frac{1}{\sqrt{K}}\sum_{k=1}^{K}\Bigl(\frac{|\langle Y,e_k\rangle|^2}{q_k}-1\Bigr)\underset{K\to\infty}\implies \mathcal{N}(0,\frac12).
\]
Consider the following family of bounded continuous functions:
\[
\varphi_{\epsilon,K}=2\min\Bigl(1,\exp\bigl(-\frac{\epsilon}{\sqrt{K}}\sum_{k=1}^{K}\bigl(\frac{|\langle \cdot,e_k\rangle|^2}{q_k}-1\bigr)\bigr)\Bigr)-1,
\]
with arbitrary $\epsilon>0$ and $K\in\N$. Then letting first $K\to\infty$, then $\epsilon\to 0$, one has
\[
\underset{\varphi\in \mathcal{C}_b^0(H,\R),\|\varphi\|_\infty\le 1}\sup~\big|\int \varphi d\nu^{\Delta t,\theta}-\int\varphi d\nu\big|\ge \underset{\epsilon\to 0}\lim~\underset{K\to\infty}\lim~\big|\int \varphi_{\epsilon,K} d\nu^{\Delta t,\theta}-\int\varphi_{\epsilon,K} d\nu\big|=2,
\]
where $\mathcal{C}_b^0(H,\R)$ denotes the space of bounded continuous functions from $H$ to $\R$.
\end{proof}

\subsection{Error analysis of the bias for the invariant distribution}

The objective of this section is to show that the method \eqref{eq:EulerExplicite} has order one of convergence for sampling the invariant distribution.
We rely on the following additional assumption, that is naturally satisfied if $F$ is of $(\beta,2)$-regularity.
\begin{ass}\label{ass:F-Euler}
The mapping $G:H\to H$ is twice Fr\'echet differentiable, with bounded first and second order derivatives.
\end{ass}

Assumption \ref{ass:F-Euler} is a strong assumption that simplifies the convergence analysis, but does not include for instance the case of the Nemytskii operator \eqref{equation:Nemytskii} for $f(x)=-x+\cos(x)$. 
In Section \ref{section:high_order_inv_meas}, we shall consider instead alternate assumptions for higher order estimates for the invariant distribution.

\begin{theo}\label{theo:order1}
Let Assumptions~\ref{ass:AF} and~\ref{ass:F-Euler} be satisfied, and consider the integrator given by~\eqref{eq:EulerExplicite}. Assume that $\varphi:H\to\R$ is twice Fr\'echet differentiable, such that
\[\underset{y\in H}\sup~|D\varphi(y)|+\underset{y\in H}\sup~\|D^2\varphi(y)\|_{\mathcal{L}(H)}\le 1.\]
Then, there exists $C>0$ such that for all $\Delta t\in(0,1]$, one has
\begin{equation}\label{eq:theo-order1}
\big|\int \varphi d\mu^{\Delta t}-\int\varphi d\mu_\star\big|\le C\Delta t.
\end{equation}
\end{theo}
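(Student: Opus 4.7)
The plan is to establish a uniform-in-$T$ weak error estimate of order $\Delta t$ and then let $T\to\infty$ to recover the bias estimate~\eqref{eq:theo-order1}, as the authors explicitly outline in the introduction. I would work throughout on the spectral Galerkin truncation $H^K$, using the projected scheme $Y_n^K$ (whose invariant distribution $\mu^{\Delta t,K}$ is well-defined by Lemma~\ref{lem:ergoYn} applied in $H^K$, with $K$-uniform moment bounds) and the continuous projected dynamics $Y^K$ (which is exponentially mixing towards $\mu_\star^K$ by Proposition~\ref{propo:invariant}). The target is
\begin{equation*}
\bigl|\E\varphi(Y_N^K)-\E\varphi(Y^K(T))\bigr|\le C\Delta t,\qquad T=N\Delta t,
\end{equation*}
with $C$ independent of $T$ and $K$; letting $T\to\infty$ yields $|\int\varphi\,d\mu^{\Delta t,K}-\int\varphi\,d\mu_\star^K|\le C\Delta t$, and the passage $K\to\infty$ then relies on $\int\varphi\,d\mu_\star^K\to\int\varphi\,d\mu_\star$ recorded in Section~\ref{sec:setting} together with a parallel convergence $\int\varphi\,d\mu^{\Delta t,K}\to\int\varphi\,d\mu^{\Delta t}$ coming from Galerkin convergence of the scheme and tightness furnished by uniform moment bounds.

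For the uniform-in-$T$ weak error, I would use the Kolmogorov-equation technique. Let $v(t,y)=\E_y[\varphi(Y^K(t))]$ as in~\eqref{eq:v}, solving~\eqref{eq:Kolmogorov}, and let $Z_t=Y_n^K+(t-t_n)G^K(Y_n^K)+W^{Q,K}(t)-W^{Q,K}(t_n)$ denote the continuous-time interpolation of the scheme on $[t_n,t_{n+1}]$. Applying It\^o's formula to $t\mapsto v(T-t,Z_t)$ and using $\partial_tv=\mathcal{L}^Kv$, all but the drift-mismatch term cancels, producing after taking expectations and summing the telescoping identity
\begin{equation*}
\E\varphi(Y_N^K)-\E\varphi(Y^K(T))=\sum_{n=0}^{N-1}\E\int_{t_n}^{t_{n+1}}Dv(T-t,Z_t)\cdot\bigl(G^K(Y_n^K)-G^K(Z_t)\bigr)\,dt.
\end{equation*}
A Taylor expansion of $G^K$ around $Y_n^K$, combined with the conditional identity $\E[Z_t-Y_n^K\mid Y_n^K]=(t-t_n)G^K(Y_n^K)$ and the moment bound $\E[|Z_t-Y_n^K|^2\mid Y_n^K]\le C(t-t_n)$, shows that $\E[G^K(Z_t)-G^K(Y_n^K)\mid Y_n^K]$ has norm $O(t-t_n)$, while the residual $[Dv(T-t,Z_t)-Dv(T-t,Y_n^K)](G^K(Y_n^K)-G^K(Z_t))$ is controlled via a $D^2v$ bound and Lipschitz continuity of $G^K$, producing another $O(t-t_n)$ contribution. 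Proposition~\ref{propo:Kolmogorov} supplies the exponential decays $|Dv(T-t,y)|$ and $\|D^2v(T-t,y)\|_{\mathcal{L}(H)}\le C(1+|y|^\kappa)e^{-\gamma(T-t)}$ uniformly in $K$, so each summand is bounded by $C\Delta t^2e^{-\gamma(T-t_n)}(1+\E|Y_n^K|^\kappa)$ and the geometric-type inequality $\Delta t\sum_n e^{-\gamma(T-t_n)}\le C$ converts the total into $C\Delta t$.

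The hard part will be enforcing uniformity of all constants with respect to the truncation level $K$ and with respect to $T$ simultaneously. Uniformity in $T$ relies crucially on the exponential-in-time decay in Proposition~\ref{propo:Kolmogorov}, itself a consequence of the dissipativity in Assumption~\ref{ass:AF}; uniformity in $K$ requires that the constants in Proposition~\ref{propo:Kolmogorov} be $K$-independent (which they are, since the proof in the excerpt works directly with $G^K=-I+QF^K$, whose first and second derivatives are controlled by those of $G$ under Assumption~\ref{ass:F-Euler}), and that the moment bounds on $Y_n^K$ and on $Z_t-Y_n^K$ be $K$-uniform, which follows from ${\rm Tr}(Q^K)\le{\rm Tr}(Q)$ and Lemma~\ref{lem:ergoYn}. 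A final technical point is to justify rigorously the convergence $\int\varphi\,d\mu^{\Delta t,K}\to\int\varphi\,d\mu^{\Delta t}$ as $K\to\infty$; this should follow from pathwise Galerkin convergence $Y_n^K\to Y_n$ together with tightness of $\{\mu^{\Delta t,K}\}_K$ in $H$ from uniform moment bounds and continuity of $\varphi$.
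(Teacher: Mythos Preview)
Your proposal is correct and follows the same Kolmogorov-equation strategy as the paper: telescoping via $v(t,y)=\E_y[\varphi(Y^K(t))]$, It\^o's formula on the continuous interpolation to isolate the drift mismatch $\langle Dv,\,G^K(Y_n)-G^K(\tilde Y(t))\rangle$, exponential decay of the derivatives of $v$ from Proposition~\ref{propo:Kolmogorov}, and a geometric sum to obtain the uniform-in-$T$ bound $C\Delta t$.

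The one noteworthy difference is the local error decomposition. The paper freezes \emph{both} arguments of $Dv$ at $(t_N-t_n,Y_n)$ and is therefore led to three terms $\epsilon_n^1,\epsilon_n^2,\epsilon_n^3$; handling the time shift $\epsilon_n^3$ requires the temporal H\"older estimate~\eqref{eq:propo-Kolmogorov2} (the Corollary to Proposition~\ref{propo:Kolmogorov}), and $\epsilon_n^1$ is treated by a second It\^o formula applied to the auxiliary function $\Psi_n=\langle G(\cdot),Dv(t_N-t_n,Y_n)\rangle$. You instead freeze only the spatial argument (keeping $Dv(T-t,\cdot)$), so your decomposition has two terms and never needs~\eqref{eq:propo-Kolmogorov2}: the ``frozen'' term is handled by conditioning on $\mathcal{F}_{t_n}$ and a direct Taylor expansion of $G^K$ (equivalent to the paper's It\^o-on-$\Psi_n$ step), and the ``residual'' term coincides with the paper's $\epsilon_n^2$. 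Your route is thus marginally more economical. One small caveat: the conditional bound $\E[|Z_t-Y_n^K|^2\mid Y_n^K]\le C(t-t_n)$ you quote should carry a factor $(1+|Y_n^K|^2)$ (from the drift contribution $(t-t_n)G^K(Y_n^K)$), which is harmless since Lemma~\ref{lem:ergoYn} supplies uniform second moments. Your explicit treatment of the Galerkin limit $K\to\infty$ is also more detailed than what the paper writes out.
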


Theorem~\ref{theo:order1} is a generalization to the non-Gaussian case of the error estimate~\eqref{eq:error_Gaussian_C2}. The need to consider sufficiently regular test functions $\varphi$ has been justified by the non-convergence result~\eqref{eq:error_Gaussian_TV}.
The main tool of the proof is the auxiliary function $v$ defined by~\eqref{eq:v}, which solves the Kolmogorov equation~\eqref{eq:Kolmogorov}. The regularity estimates stated in Proposition~\ref{propo:Kolmogorov} play a crucial role.

\begin{proof}[Proof of Theorem~\ref{theo:order1}]
Let $\gamma'\in(0,\gamma)$ be fixed.
To prove that~\eqref{eq:theo-order1} holds, it suffices to establish the following weak error estimate: there exists $C>0$ such that for all $\Delta t\in(0,\Delta t_0]$ and all $N\in\N$, one has
\[
\big|\E[\varphi(Y_N)]-\E[\varphi(Y(t_N))]\big|\le C\Delta t,
\]
with $t_N=N\Delta t$. Passing to the limit $N\to\infty$ then yields~\eqref{eq:theo-order1}.

The strategy employed to study the weak error of numerical schemes, both in finite and infinite dimension is as follows. First the weak error is rewritten in terms of the solution $v$ of the Kolmogorov equation. Second a telescoping sum argument yields the following error expansion:
\begin{align*}
E[\varphi(Y_N)]-\E[\varphi(Y(t_N)]&=\E[v(0,Y_N)]-\E[v(t_N,Y_0)]\\
&=\sum_{n=0}^{N-1}\E\bigl[v(t_N-t_{n+1},Y_{n+1})-v(t_N-t_n,Y_n)\bigr].
\end{align*}
In order to apply the It\^o formula to estimate each term of the sum above, it is convenient to introduce the following continuous-time auxiliary process: for all $n\in\N_0$ and all $t\in[t_n,t_{n+1}]$, set
\[
\tilde{Y}(t)=Y_n+(t-t_n)G(Y_n)+\bigl(W^Q(t)-W^Q(t_n)\bigr).
\]
Observe that $\tilde{Y}(t_n)=Y_n$ for all $n\in\N_0$, and that for $t\in[t_n,t_{n+1}]$, one has
\[
d\tilde{Y}(t)=G(Y_n)dt+dW^Q(t).
\]
Owing to the moment bounds~\eqref{eq:moment-num-Euler} and using the Lipschitz continuity of $F$, the following moment bounds for the auxiliary process $\tilde{Y}$ is obtained: if $\Delta t\le \Delta t_0$, one has
\begin{equation}\label{eq:moment_Ytilde}
\underset{t\ge 0}\sup~\E|\tilde{Y}(t)|\le C(1+\underset{n\ge 0}\sup~\E|Y_n|)\le C(1+\E|Y_0|).
\end{equation}
For each $n\in\N_0$, introduce the auxiliary infinitesimal generator $\tilde{\mathcal{L}}_n^{\Delta t}$, such that
\[
\tilde{\mathcal{L}}_n^{\Delta t}\varphi(y)=\langle G(Y_n),D\varphi(y)\rangle+\frac{1}{2}{\rm Tr}\bigl(QD^2\varphi(y)\bigr).
\]
Applying the It\^o formula and the fact that $v$ solves the Kolmogorov equation~\eqref{eq:Kolmogorov}, for all $n\in\{0,\ldots,N-1\}$, one has
\begin{align*}
\E\bigl[v(t_N-t_{n+1},Y_{n+1})-v(t_N-t_n,Y_n)\bigr]&=\int_{t_n}^{t_{n+1}}\E\bigl[\bigl(\tilde{\mathcal{L}}_n^{\Delta t}-\partial_t\bigr)v(t_N-t,\tilde{Y}(t))\bigr]dt\\
&=\int_{t_n}^{t_{n+1}}\E\bigl[\bigl(\tilde{\mathcal{L}}_n^{\Delta t}-\mathcal{L}\bigr)v(t_N-t,\tilde{Y}(t))\bigr]dt\\
&=\int_{t_n}^{t_{n+1}}\E\bigl[\langle \frac{G(Y_n)-G(\tilde{Y}(t))}{1+\theta\Delta t},Dv(t_N-t,\tilde{Y}(t))\rangle\bigr]dt.
\end{align*}
We introduce the following decomposition of the error:
\begin{align*}
\epsilon_n^{1}&=\int_{t_n}^{t_{n+1}}\E\bigl[\langle G(Y_n)-G(\tilde{Y}(t)),Dv(t_N-t_n,Y_n)\rangle\bigr]dt,\\
\epsilon_n^{2}&=\int_{t_n}^{t_{n+1}}\E\bigl[\langle G(Y_n)-G(\tilde{Y}(t)),Dv(t_N-t_n,\tilde{Y}(t))-Dv(t_N-t_n,Y_n)\rangle\bigr]dt,\\
\epsilon_n^{3}&=\int_{t_n}^{t_{n+1}}\E\bigl[\langle G(Y_n)-G(\tilde{Y}(t)),Dv(t_N-t,\tilde{Y}(t))-Dv(t_N-t_n,\tilde{Y}(t))\rangle\bigr]dt.
\end{align*}

To deal with the first error term $\epsilon_n^{1}$, introduce the auxiliary function $\Psi_n=\langle G(\cdot),Dv(t_N-t_n,Y_n)\rangle$. Using a conditioning argument and applying the It\^o formula, one obtains
\begin{align*}
\E\bigl[\langle G(Y_n)-G(\tilde{Y}(t)),Dv(t_N-t_n,Y_n)\rangle~\big|~Y_n\bigr]&=\E\bigl[\Psi_n(Y_n)-\Psi_n(\tilde{Y}_n(t))~\big|~Y_n\bigr]\\
&=\int_{t_n}^{t}\E\bigl[\tilde{\mathcal{L}}_n^{\Delta t}\Psi_n(\tilde{Y}(s))~\big|~Y_n\bigr]ds.
\end{align*}
Owing to Assumption~\ref{ass:F-Euler} and to Proposition~\ref{propo:Kolmogorov}, it is then straightforward to check that the first and second order derivatives of $\Psi_n$ satisfy
\[
|D\Psi_n(y).h|\le Ce^{-\gamma(t_N-t_n)}|h|\quad,~|D^2\Psi_n(y).(h,k)|\le Ce^{-\gamma(t_N-t_n)}|h||k|.
\]
As a consequence, using the moment bounds~\eqref{eq:moment_Ytilde},
\[
|\epsilon_n^{1}|\le C(1+\E|Y_0|)\Delta t^2e^{-\gamma(t_N-t_n)}.
\]

To deal with the second error term $\epsilon_{n}^{2}$, it suffices to use Proposition~\ref{propo:Kolmogorov} and the Lipschitz continuity of $F$, to obtain
\[
|\epsilon_n^{2}|\le Ce^{-\gamma'(t_N-t_n)}\int_{t_n}^{t}\E|\tilde{Y}(t)-Y_n|^2dt\le C(1+\E|Y_0|^2)\Delta t^2e^{-\gamma'(t_N-t_n)}.
\]
To get the last inequality above, observe that for all $t\in[t_n,t_{n+1}]$,
\begin{equation}\label{eq:accroissement}
\E|\tilde{Y}(t)-Y_n|^2=(t-t_n)^2\E|G(Y_n)|^2+(t-t_n){\rm Tr}(Q) \le C(1+|y|^2)\Delta t,
\end{equation}
using the moment bounds~\eqref{eq:moment-num-Euler}.

To deal with the third error term $\epsilon_n^{3}$, it suffices to use the estimates~\eqref{eq:propo-Kolmogorov2} (see Proposition~\ref{propo:Kolmogorov}) and~\eqref{eq:accroissement}: indeed, one obtains
\[
|\epsilon_n^{3}|\le C\int_{t_n}^{t_{n+1}}e^{-\gamma'(t_N-t)}(t-t_n)(1+\E|Y_0|^2)ds,
\]
using the moment bounds~\eqref{eq:moment-num-Euler}.

Gathering the estimates for $\epsilon_n^1$, $\epsilon_n^2$ and $\epsilon_n^3$ then yields
\begin{align*}
\Big|\E[\varphi(Y_N)]-\E[\varphi(Y(t_N)]\Big|&\le \sum_{n=0}^{N-1}\Big|\E\bigl[v(t_N-t_{n+1},Y_{n+1})-v(t_N-t_n,Y_n)\bigr]\Big|\\
&\le \sum_{n=0}^{N-1}\bigl(|\epsilon_n^1|+|\epsilon_n^2|+|\epsilon_n^3|\bigr)\\
&\le C(1+\E|Y_0|^2)\Delta t\int_{0}^{t_N}e^{-\gamma'(t_N-t)}dt\\
&\le C(1+\E|Y_0|^2)\Delta t\int_{0}^{\infty}e^{-\gamma' t}dt.
\end{align*}
This concludes the proof of Theorem~\ref{theo:order1}.
\end{proof}

\section{High order convergence analysis}
\label{section:high_order_inv_meas}

The aim of this section is to create, under stronger regularity assumptions, high order methods for sampling $\mu_\star$, given by \eqref{eq:def_mu_star}.
We consider one step integrators with postprocessors,
\[
Y_{n+1}^{\Delta t}=\psi^{\Delta t}(Y_n^{\Delta t}),
\quad
\overline{Y}_n^{\Delta t}=\overline{\psi}^{\Delta t}(Y_n^{\Delta t}).
\]
We consider a vector field $F\in \mathfrak{X}^{4,\beta}_P$ and test functions $\varphi\in\mathcal{C}^{6,\beta}_P$.
Recall that the dynamics $\bigl(Y(t)\bigr)_{t\ge 0}$ to be discretized is given by the preconditioned SPDE~\eqref{eq:precondSPDE}. This process is ergodic and mixing, and its unique invariant distribution is $\mu_\star$.
Thus, we assume a similar property for the integrator.
We consider the following assumptions.
\begin{ass}
\label{ass:ergodic_num}
For some $\Delta t_0>0$, if $\Delta t\in(0,\Delta t_0]$, the Markov process $\bigl(Y_n^{\Delta t}\bigr)_{n\ge 0}$ is ergodic and mixing. Its unique invariant distribution, denoted by $\mu^{\Delta t}$, satisfies
\[
\E[\varphi(Y_n)]\underset{n\to\infty}\to \int \varphi d\mu^{\Delta t},
\]
for all bounded and continuous test functions $\varphi:H\to \R$, and any arbitrary initial condition $y\in H$.
\end{ass}

\begin{ass}
\label{ass:bounded_mom}
The integrator has bounded moments of all order: for any $\kappa\in\N$, there exists $C>0$ such that
\[
\underset{\Delta t\in(0,\Delta t_0]}\sup~\underset{n\ge 0}\sup~\E[|Y_n^{\Delta t}|_\beta^\kappa]\le C(1+|Y_0|_\beta^\kappa).
\]
\end{ass}

Note that Assumption \ref{ass:bounded_mom} provides moment bounds for the numerical invariant distribution $\mu^{\Delta t}$:
\[
\underset{\Delta t\in(0,\Delta t_0]}\sup~\int |y|_\beta^\kappa d\mu^{\Delta t}(y)\le C.
\]

In Section \ref{sec:analysis_high_order_sampling}, we perform an analysis for a general scheme relying on Assumptions \ref{ass:ergodic_num} and \ref{ass:bounded_mom}, while in Section \ref{sec:LM}, we focus in particular on the following method \eqref{eq:EulerExplicitePostprocessed} of second order for the invariant distribution.
It corresponds to the Leimkuhler-Matthews method \cite{Leimkuhler13rco, Leimkuhler14otl} applied to the preprocessed dynamics \eqref{eq:precondSPDE} and reformulated as a postprocessed integrator \cite{Vilmart15pif},
\begin{equation}\label{eq:EulerExplicitePostprocessed}
\begin{aligned}
Y_{n+1}&=Y_n+\Delta tG(Y_n+\frac12\Delta W_n^Q)+\Delta W_n^Q,\\
\overline{Y}_n&=Y_n+\frac12\Delta W_n^Q,
\end{aligned}
\end{equation}
and on its spectral discretisation
\begin{align*}
Y_{n+1}^K&=Y_n^K+\Delta tG^K(Y_n^K+\frac12\Delta W_n^{Q,K})+\Delta W_n^{Q,K},\\
\overline{Y}_n^K&=Y_n^K+\frac12\Delta W_n^{Q,K}.
\end{align*}
Note that, under Assumption \ref{ass:AF}, the scheme~\eqref{eq:EulerExplicitePostprocessed} naturally satisfies Assumptions~\ref{ass:ergodic_num} and~\ref{ass:bounded_mom}, which is shown by using the same techniques as in the proof of the moment bounds~\eqref{eq:moment-num-Euler} from Lemma~\ref{lem:ergoYn} (see also \cite[Lemma 2.2]{Milstein04snf}).

\subsection{Convergence analysis for high order methods}
\label{sec:analysis_high_order_sampling}

The objective of this section is to exhibit order conditions to have methods of order $2$.

First, Lemma~\ref{lem:order1} is an alternative approach compared to Section \ref{section:simple_CV_analysis} to check that a method is of order $1$.
\begin{lemma}\label{lem:order1}
Under Assumptions \ref{ass:ergodic_num} and \ref{ass:bounded_mom}, assume that, for any $\varphi\in\mathcal{C}^{4,\beta}_P$, one has the Taylor-type expansion
\[
\E\bigl[\varphi\bigl(\psi^{\Delta t}(y)\bigr)\bigr]-\varphi(y)=\Delta t\mathcal{L}\varphi(y)+\Delta t^2R_1(\varphi,y,\Delta t),
\]
for all $\Delta t$ assumed small enough,
with $\underset{\Delta t\in(0,\Delta t_0]}\sup~|R_1(\varphi,y,\Delta t)|\le C(1+|y|_\beta^\kappa)$ for some $p\in\N$ and $C>0$.
Then, for $\Delta t \rightarrow 0$ and $\varphi\in\mathcal{C}^{4,\beta}_P$,
\begin{equation}\label{eq:expansion_order1}
\int\varphi d\mu^{\Delta t}-\int\varphi d\mu_\star={\rm O}(\Delta t).
\end{equation}
\end{lemma}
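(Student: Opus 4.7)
My approach is a Poisson-equation argument carried out on the spectral Galerkin truncation $H^K$, where the infinitesimal generator $\mathcal{L}$ is well-defined, followed by passage to the limit $K\to\infty$.

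Fix $K\in\N$ and let $\Psi^K$ be the function on $H^K$ defined by \eqref{eq:defPsi}, associated with the Galerkin process $Y^K$. By the exponential mixing estimate of Proposition~\ref{propo:invariant}, the defining integral converges, and $\Psi^K$ satisfies the Poisson equation $-\mathcal{L}\Psi^K(y)=\varphi(y)-\int\varphi\,d\mu_\star^K$ on $H^K$. Because $\varphi\in\mathcal{C}^{4,\beta}_P$ and $F\in\mathfrak{X}^{4,\beta}_P$, Proposition~\ref{propo:Kolmogorov} provides exponentially decaying, $K$-uniform estimates on the derivatives $D^m v(t,\cdot)$ for $1\le m\le 4$; integrating over $t\in[0,\infty)$ then shows $\Psi^K\in\mathcal{C}^{4,\beta}_P$, with constants and growth exponent $\kappa$ independent of $K$.

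Apply now the Taylor-type expansion of the hypothesis to $\Psi^K$ substituted for $\varphi$: for every $y\in H^K$ and $\Delta t\in(0,\Delta t_0]$,
\[
\E\bigl[\Psi^K(\psi^{\Delta t}(y))\bigr]-\Psi^K(y)=\Delta t\,\mathcal{L}\Psi^K(y)+\Delta t^2 R_1(\Psi^K,y,\Delta t).
\]
Integrate both sides against the invariant distribution $\mu^{\Delta t,K}$ provided by Assumption~\ref{ass:ergodic_num}. By invariance the left-hand side vanishes, and after using the Poisson equation one obtains
\[
\int\varphi\,d\mu^{\Delta t,K}-\int\varphi\,d\mu_\star^K=-\int\mathcal{L}\Psi^K\,d\mu^{\Delta t,K}=\Delta t\int R_1(\Psi^K,y,\Delta t)\,d\mu^{\Delta t,K}(y).
\]
Since $R_1(\Psi^K,\cdot,\Delta t)$ is bounded by $C(1+|y|_\beta^{\kappa})$ uniformly in $\Delta t\in(0,\Delta t_0]$ and in $K$, Assumption~\ref{ass:bounded_mom} yields a bound of order $\Delta t$ with constants independent of $K$. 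Passing to the limit $K\to\infty$ via the convergence $\int\varphi\,d\mu_\star^K\to\int\varphi\,d\mu_\star$ recalled in Section~\ref{sec:setting}, together with the analogous convergence for $\mu^{\Delta t,K}$, yields \eqref{eq:expansion_order1}.

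\emph{Main obstacle.} The delicate technical point is the control of $\Psi^K$ in $\mathcal{C}^{4,\beta}_P$ with constants independent of $K$. This relies simultaneously on the exponential mixing of Proposition~\ref{propo:invariant} (to ensure convergence of the time integral defining $\Psi^K$) and on the $K$-uniform Kolmogorov-type estimates of Proposition~\ref{propo:Kolmogorov}. Uniformity in $K$ is precisely what permits applying the Taylor expansion hypothesis to $\Psi^K$ itself and then passing to the limit $K\to\infty$ at the end of the argument.
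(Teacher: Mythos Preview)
Your proof is correct and follows essentially the same approach as the paper: integrate the Taylor-type expansion against the numerical invariant distribution so that the left-hand side vanishes by invariance, then substitute the Poisson solution $\Psi$ for $\varphi$ and use the moment bounds from Assumption~\ref{ass:bounded_mom}. The paper is more terse --- it writes the argument directly for $\mu^{\Delta t}$ and $\Psi$ without spelling out the Galerkin index $K$ or the verification that $\Psi\in\mathcal{C}^{4,\beta}_P$ --- whereas you make these steps explicit, which is in fact more careful given that $\Psi$ and $\mathcal{L}$ are only rigorously defined on $H^K$ in the paper's setup.
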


\begin{proof}[Proof of Lemma~\ref{lem:order1}]
Since $\mu^{\Delta t}$ is the invariant distribution of the Markov chain $\bigl(Y_n^{\Delta t}\bigr)_{n\ge 0}$, one has the following equality: for all $\Delta t\in(0,\Delta t_0]$,
\begin{equation}\label{eq:equilibrium}
\int \Bigl(\E\bigl[\varphi\bigl(\psi^{\Delta t}(y)\bigr)\bigr]-\varphi(y)\Bigr)d\mu^{\Delta t}(y)=0.
\end{equation}
Then, one obtains
\[
0=\int \mathcal{L}\varphi(y)d\mu^{\Delta t}(y)+\Delta t\int R_1(\varphi,y,\Delta t) d\mu^{\Delta t}(y).
\]
Choosing $\varphi=\Psi$ then yields
\[
\int\varphi d\mu^{\Delta t}-\int\varphi d\mu_\star=\Delta t\int R_1(\varphi,y,\Delta t)d\mu^{\Delta t}(y)={\rm O}(\Delta t),
\]
owing to the moment bounds for the numerical invariant distribution $\mu^{\Delta t}$.
\end{proof}

Using the same approach then yields the following result.
\begin{propo}\label{propo:order2}
Under Assumptions \ref{ass:ergodic_num} and \ref{ass:bounded_mom}, assume that, for any $\varphi\in\mathcal{C}^{6,\beta}_P$, one has the expansion
\[
\E\bigl[\varphi\bigl(\psi^{\Delta t}(y)\bigr)\bigr]-\varphi(y)=\Delta t\mathcal{L}\varphi(y) +\Delta t^2\mathcal{A}_1\varphi(y)+\Delta t^3R_2(\varphi,y,\Delta t),
\]
for all $\Delta t$ assumed small enough,
where $\mathcal{A}_1\varphi\in\mathcal{C}^{2,\beta}_P$ and the following estimate is satisfied for some $p\in\N$ and $C>0$,
\[\underset{\Delta t\in(0,\Delta t_0]}\sup~\bigl(|\mathcal{A}_1\varphi(y)|+|R_2(\varphi,y,\Delta t)|\bigr)\le C(1+|y|_\beta^\kappa),\]
where $p\in\N$ and $C>0$.
Then, if $\varphi\in\mathcal{C}^{6,\beta}_P$, one has for $\Delta t \rightarrow 0$,
\begin{equation}\label{eq:expansion_order2}
\int\varphi d\mu^{\Delta t}-\int\varphi d\mu_\star=\Delta t\int \mathcal{A}_1\Psi(y)d\mu_\star(y)+{\rm O}(\Delta t^2),
\end{equation}
where $\Psi$ is given by the Poisson equation \eqref{eq:Poisson}.
Moreover, assume further that for any $\varphi\in\mathcal{C}^{6,\beta}_P$, one has the error expansion
\[
\E\bigl[\varphi\bigl(\overline{\psi}^{\Delta t}(y)\bigr)\bigr]-\varphi(y)=\Delta t\overline{\mathcal{A}}_1\varphi(y)+\Delta t^2\overline{R}_2(\varphi,y,\Delta t),
\]
with $\overline{\mathcal{A}}_1\varphi\in\mathcal{C}^{4,\beta}_P$, $\overline{\mathcal{A}}_11=0$, and the estimate for some $p\in\N$ and $C>0$,
\[\underset{\Delta t\in(0,\Delta t_0]}\sup~\bigl(|\overline{\mathcal{A}}_1\varphi(y)|+|\overline{R}_2(\varphi,y,\Delta t)|\bigr)\le C(1+|y|_\beta^\kappa).\]
Then, if $\varphi\in\mathcal{C}^{6,\beta}_P$, one has
\begin{equation}\label{eq:expansion_order2-postproc}
\int\varphi d\overline{\mu}^{\Delta t}-\int\varphi d\mu_\star=\Delta t\int \bigl(\mathcal{A}_1+[\mathcal{L},\overline{\mathcal{A}}_1]\bigr)\Psi(y)d\mu_\star(y)+{\rm O}(\Delta t^2),
\end{equation}
where $[\mathcal{L},\overline{\mathcal{A}}_1]=\mathcal{L}\overline{\mathcal{A}}_1-\overline{\mathcal{A}}_1\mathcal{L}$ denotes the commutator of the operators $\mathcal{L}$ and $\overline{\mathcal{A}}_1$. 
\end{propo}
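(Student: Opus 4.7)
The plan is to mimic the proof of Lemma~\ref{lem:order1} but extract one more order from the Taylor expansion, and then, in the postprocessed case, combine the expansion of $\psi^{\Delta t}$ with the one of $\overline{\psi}^{\Delta t}$.

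First, for the expansion \eqref{eq:expansion_order2}, I would apply the stationarity identity \eqref{eq:equilibrium} of $\mu^{\Delta t}$ to the test function $\varphi=\Psi$, where $\Psi$ is the solution of the Poisson equation \eqref{eq:Poisson}. The regularity $\Psi\in\mathcal{C}^{6,\beta}_P$ follows by integrating in time the spatial bounds of Proposition~\ref{propo:Kolmogorov} (the exponential decay there makes the improper integral converge). Plugging the expansion into \eqref{eq:equilibrium} and using $-\mathcal{L}\Psi=\varphi-\int\varphi d\mu_\star$ yields
\[
\int\varphi d\mu^{\Delta t}-\int\varphi d\mu_\star=\Delta t\int\mathcal{A}_1\Psi\,d\mu^{\Delta t}+\Delta t^2\int R_2(\Psi,y,\Delta t)\,d\mu^{\Delta t},
\]
and the remainder is $O(\Delta t^2)$ by the polynomial bound on $R_2$ combined with the moment bounds of Assumption~\ref{ass:bounded_mom}. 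It then remains to replace $\mu^{\Delta t}$ by $\mu_\star$ in the leading term. Since $\mathcal{A}_1\Psi\in\mathcal{C}^{2,\beta}_P$ lies in a class to which the first-order result of Lemma~\ref{lem:order1} applies (after, if needed, one further bootstrap using the Poisson equation associated with $\mathcal{A}_1\Psi$), I obtain $\int\mathcal{A}_1\Psi d\mu^{\Delta t}=\int\mathcal{A}_1\Psi d\mu_\star+O(\Delta t)$, and the extra $\Delta t$ is absorbed into $O(\Delta t^2)$, proving \eqref{eq:expansion_order2}.

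For the postprocessed statement \eqref{eq:expansion_order2-postproc}, at equilibrium the law of $\overline{Y}_n^{\Delta t}=\overline{\psi}^{\Delta t}(Y_n^{\Delta t})$ satisfies
\[
\int\varphi d\overline{\mu}^{\Delta t}=\int \E\bigl[\varphi(\overline{\psi}^{\Delta t}(y))\bigr]\,d\mu^{\Delta t}(y)=\int\varphi\,d\mu^{\Delta t}+\Delta t\int\overline{\mathcal{A}}_1\varphi\,d\mu^{\Delta t}+O(\Delta t^2).
\]
Combining with the expansion already obtained for $\mu^{\Delta t}$ and replacing $\mu^{\Delta t}$ by $\mu_\star$ in $\int\overline{\mathcal{A}}_1\varphi\,d\mu^{\Delta t}$ via Lemma~\ref{lem:order1} (using $\overline{\mathcal{A}}_1\varphi\in\mathcal{C}^{4,\beta}_P$), I get
\[
\int\varphi d\overline{\mu}^{\Delta t}-\int\varphi d\mu_\star=\Delta t\int\mathcal{A}_1\Psi\,d\mu_\star+\Delta t\int\overline{\mathcal{A}}_1\varphi\,d\mu_\star+O(\Delta t^2).
\]
The last step is to turn $\int\overline{\mathcal{A}}_1\varphi\,d\mu_\star$ into the commutator expression. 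Using the Poisson identity $\varphi=-\mathcal{L}\Psi+\int\varphi d\mu_\star$ and the assumption $\overline{\mathcal{A}}_11=0$, the constant term vanishes and $\overline{\mathcal{A}}_1\varphi=-\overline{\mathcal{A}}_1\mathcal{L}\Psi$. Finally, since $\mu_\star$ is invariant for $\mathcal{L}$, one has $\int\mathcal{L}(\overline{\mathcal{A}}_1\Psi)\,d\mu_\star=0$, so
\[
\int[\mathcal{L},\overline{\mathcal{A}}_1]\Psi\,d\mu_\star=-\int\overline{\mathcal{A}}_1\mathcal{L}\Psi\,d\mu_\star=\int\overline{\mathcal{A}}_1\varphi\,d\mu_\star,
\]
which yields \eqref{eq:expansion_order2-postproc}.

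The delicate point, and the main obstacle, is the regularity bookkeeping underlying Step~2: one must justify that $\mathcal{A}_1\Psi$ and $\overline{\mathcal{A}}_1\varphi$ belong to sufficiently smooth classes $\mathcal{C}^{k,\beta}_P$ to legitimately reapply Lemma~\ref{lem:order1} (or to re-solve an auxiliary Poisson equation for them). This is exactly why the hypotheses are strengthened here to $F\in\mathfrak{X}^{4,\beta}_P$ and $\varphi\in\mathcal{C}^{6,\beta}_P$, and why the a priori bounds on $\mathcal{A}_1\varphi$, $\overline{\mathcal{A}}_1\varphi$ and the remainders $R_2$, $\overline{R}_2$ are imposed with polynomial weights in $|y|_\beta$: the Kolmogorov estimates of Proposition~\ref{propo:Kolmogorov} together with Assumption~\ref{ass:bounded_mom} then yield all the $K$-uniform bounds required for the bootstrap. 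Once these smoothness and moment issues are settled, the algebraic identity involving the commutator $[\mathcal{L},\overline{\mathcal{A}}_1]$ is a short computation.
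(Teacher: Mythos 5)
Your proof is correct and follows essentially the same strategy as the paper: apply the stationarity identity \eqref{eq:equilibrium} to the Poisson solution $\Psi$, trade $\mu^{\Delta t}$ for $\mu_\star$ in the leading-order term by re-invoking the first-order result of Lemma~\ref{lem:order1}, and, for the postprocessed case, use $\varphi=-\mathcal{L}\Psi+\int\varphi\,d\mu_\star$ together with $\overline{\mathcal{A}}_1 1=0$ and $\int\mathcal{L}(\cdot)\,d\mu_\star=0$ to rewrite $\int\overline{\mathcal{A}}_1\varphi\,d\mu_\star$ as $\int[\mathcal{L},\overline{\mathcal{A}}_1]\Psi\,d\mu_\star$. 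Your signs are in fact consistent throughout (the paper's intermediate display contains a sign typo, $\mathcal{A}_1\Psi-\overline{\mathcal{A}}_1\varphi$ where $\mathcal{A}_1\Psi+\overline{\mathcal{A}}_1\varphi$ is meant, though the final identity is the same), and your closing remark correctly flags the regularity bookkeeping needed to apply Lemma~\ref{lem:order1} to $\mathcal{A}_1\Psi$ and $\overline{\mathcal{A}}_1\varphi$ as the only genuinely delicate step, as in the paper.
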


\begin{proof}[Proof of Proposition~\ref{propo:order2}]
The proof of~\eqref{eq:expansion_order2} follows the same steps as the proof of~\eqref{eq:expansion_order1}: using the identity~\eqref{eq:equilibrium} with $\varphi=\Psi$, one obtains
\begin{align*}
\int\varphi d\mu^{\Delta t}-\int\varphi d\mu_\star&=\Delta t\int\mathcal{A}_1\Psi d\mu^{\Delta t}+{\rm O}(\Delta t^2)\\
&=\Delta t\int\mathcal{A}_1\Psi d\mu_\star+{\rm O}(\Delta t^2)
\end{align*}
using~\eqref{eq:expansion_order1} for the function $\mathcal{A}_1\Psi\in\mathcal{C}^{4,\beta}_P$ in the last step.
To prove~\eqref{eq:expansion_order2-postproc}, observe that
\begin{align*}
\int \varphi d\overline{\mu}^{\Delta t}-\int\varphi d\mu^{\Delta t}&=\int \Bigl(\E\bigl[\varphi\bigl(\overline{\psi}^{\Delta t}(y)\bigr)\bigr]-\varphi(y)\Bigr)d\mu^{\Delta t}(y)\\
&=\Delta t\int \overline{\mathcal{A}}_1\varphi d\mu^{\Delta t}+{\rm O}(\Delta t^2)\\
&=\Delta t\int \overline{\mathcal{A}}_1\varphi d\mu_\star+{\rm O}(\Delta t^2),
\end{align*}
using~\eqref{eq:expansion_order1} for the function $\overline{\mathcal{A}}_1\varphi\in\mathcal{C}^{4,\beta}_P$ in the last step.
Combined with~\eqref{eq:expansion_order2}, this yields
\begin{align*}
\int\varphi d\overline{\mu}^{\Delta t}-\int\varphi d\mu_\star&=\Delta t\int \bigl(\mathcal{A}_1\Psi-\overline{\mathcal{A}}_1\varphi\bigr)d\mu_\star+{\rm O}(\Delta t^2)\\
&=\Delta t\int \bigl(\mathcal{A}_1-\overline{\mathcal{A}}_1\mathcal{L}\bigr)\Psi d\mu_\star+{\rm O}(\Delta t^2)\\
&=\Delta t\int \bigl(\mathcal{A}_1+[\mathcal{L},\overline{\mathcal{A}}_1]\bigr)\Psi(y)d\mu_\star(y)+{\rm O}(\Delta t^2),
\end{align*}
using $\varphi=-\mathcal{L}\Psi+\int\varphi d\mu_\star$, the assumption that $\overline{\mathcal{A}}_1 1=0$, and, in the last step, the property $\int \mathcal{L}\varphi d\mu_\star=0$.
\end{proof}

As an immediate consequence of Proposition~\ref{propo:order2}, one may state algebraic conditions for order $2$ as follows.
\begin{cor}\label{cor:order2}
Under the Assumptions of Proposition~\ref{propo:order2} and if $\int\mathcal{A}_1\varphi d\mu_\star=0$ for all $\varphi\in\mathcal{C}^{6,\beta}_P$, then, for all $\varphi\in\mathcal{C}^{6,\beta}_P$, one has
\[
\int\varphi d\mu^{\Delta t}-\int\varphi d\mu_\star={\rm O}(\Delta t^2).
\]
Alternatively, assume that $\int\bigl(\mathcal{A}_1+[\mathcal{L},\overline{\mathcal{A}}_1]\bigr)\varphi d\mu_\star=0$ for all $\varphi\in\mathcal{C}^{6,\beta}_P$. Then, for all $\varphi\in\mathcal{C}^{6,\beta}_P$, one has
\[
\int\varphi d\overline{\mu}^{\Delta t}-\int\varphi d\mu_\star={\rm O}(\Delta t^2).
\]
\end{cor}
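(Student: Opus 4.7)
The plan is to apply Proposition~\ref{propo:order2} directly, so the proof amounts to substituting a well-chosen test function into the two expansions \eqref{eq:expansion_order2} and \eqref{eq:expansion_order2-postproc}.

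For the first claim, I would start from \eqref{eq:expansion_order2}, which reads
\[
\int\varphi d\mu^{\Delta t}-\int\varphi d\mu_\star=\Delta t\int \mathcal{A}_1\Psi \, d\mu_\star+{\rm O}(\Delta t^2),
\]
where $\Psi$ is the solution of the Poisson equation \eqref{eq:Poisson} associated with $\varphi$. I would then observe that since $\varphi\in\mathcal{C}^{6,\beta}_P$, the regularity estimates of Proposition~\ref{propo:Kolmogorov} (combined with the definition $\Psi(y)=\int_0^\infty (v(t,y)-\int\varphi d\mu_\star^K)dt$ of \eqref{eq:defPsi} and the exponential-in-time decay) imply that $\Psi\in\mathcal{C}^{6,\beta}_P$. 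I would then apply the assumption $\int \mathcal{A}_1\varphi d\mu_\star=0$ (valid for all $\varphi\in\mathcal{C}^{6,\beta}_P$) with $\varphi$ replaced by $\Psi$. This gives $\int \mathcal{A}_1\Psi \, d\mu_\star=0$, so the ${\rm O}(\Delta t)$ term in \eqref{eq:expansion_order2} vanishes and the first estimate is immediate.

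For the postprocessed case, the argument is essentially identical starting from \eqref{eq:expansion_order2-postproc}:
\[
\int\varphi d\overline{\mu}^{\Delta t}-\int\varphi d\mu_\star=\Delta t\int \bigl(\mathcal{A}_1+[\mathcal{L},\overline{\mathcal{A}}_1]\bigr)\Psi \, d\mu_\star+{\rm O}(\Delta t^2).
\]
Again $\Psi\in\mathcal{C}^{6,\beta}_P$ by the same regularity argument, so applying the hypothesis $\int\bigl(\mathcal{A}_1+[\mathcal{L},\overline{\mathcal{A}}_1]\bigr)\varphi \, d\mu_\star=0$ to $\varphi=\Psi$ kills the linear term and the ${\rm O}(\Delta t^2)$ bound follows.

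The only non-formal point is the regularity transfer $\varphi\in\mathcal{C}^{6,\beta}_P\Rightarrow\Psi\in\mathcal{C}^{6,\beta}_P$, but this is exactly the kind of estimate already used implicitly in the proof of Proposition~\ref{propo:order2} when $\Psi$ is plugged into the order-one expansion \eqref{eq:expansion_order1}, and it follows directly from extending the inductive bounds of Proposition~\ref{propo:Kolmogorov} to $m=6$. Consequently there is no real obstacle and the corollary is indeed an immediate consequence of Proposition~\ref{propo:order2}.
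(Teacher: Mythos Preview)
Your proposal is correct and matches the paper's approach: the paper states the corollary as ``an immediate consequence of Proposition~\ref{propo:order2}'' without giving an explicit proof, and your argument---applying the hypotheses to $\Psi$ in the expansions \eqref{eq:expansion_order2} and \eqref{eq:expansion_order2-postproc}---is exactly the intended derivation. Your remark that the regularity transfer $\varphi\in\mathcal{C}^{6,\beta}_P\Rightarrow\Psi\in\mathcal{C}^{6,\beta}_P$ is the only substantive point is accurate and consistent with how the paper already uses $\Psi$ inside the proof of Proposition~\ref{propo:order2}.
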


\subsection{Construction of second order methods}
\label{sec:LM}

The main result of this section is the following convergence theorem of order two for the new method \eqref{eq:EulerExplicitePostprocessed}.

\begin{theo}
\label{theorem:high_order_inv_measure}
Let the integrator with postprocessing be given by~\eqref{eq:EulerExplicitePostprocessed}.
Under Assumption \ref{ass:AF} and assuming $F\in \mathfrak{X}^{4,\beta}_P$, for all $\varphi\in \mathcal{C}^{6,\beta}_P$ and all $\Delta t$ small enough, the following second order estimate for sampling the invariant measure of \eqref{eq:semilin} holds,
\[
\int\varphi d\overline{\mu}^{\Delta t}-\int\varphi d\mu_\star={\rm O}(\Delta t^2).
\]
Moreover, in the Gaussian case $F=0$, the method is exact: $\overline{\mu}^{\Delta t}=\nu$.
\end{theo}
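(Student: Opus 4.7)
The plan is to apply Corollary \ref{cor:order2} in its postprocessed form. Under Assumption \ref{ass:AF}, the scheme \eqref{eq:EulerExplicitePostprocessed} satisfies Assumptions \ref{ass:ergodic_num} and \ref{ass:bounded_mom} by the same arguments as in Lemma \ref{lem:ergoYn} (this is already stated after the definition of the scheme). It then remains to (i) establish the one-step Taylor expansions of $\psi^{\Delta t}$ and $\overline\psi^{\Delta t}$ required by Proposition \ref{propo:order2}, identifying explicitly the operators $\mathcal{A}_1$ and $\overline{\mathcal{A}}_1$, and (ii) verify the algebraic order condition
\[
\int \bigl(\mathcal{A}_1 + [\mathcal{L},\overline{\mathcal{A}}_1]\bigr)\varphi \, d\mu_\star = 0
\]
for every $\varphi \in \mathcal{C}^{6,\beta}_P$.

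The postprocessor expansion is immediate: since $\overline\psi^{\Delta t}(y) = y + \tfrac12 \Delta W^Q$ with $\Delta W^Q \sim \mathcal{N}(0,\Delta t\, Q)$, a fourth-order Taylor expansion of $\varphi$ together with the vanishing of odd Gaussian moments yields
\[
\E\bigl[\varphi(\overline\psi^{\Delta t}(y))\bigr] - \varphi(y) = \tfrac{\Delta t}{8}\,{\rm Tr}\bigl(Q D^2\varphi(y)\bigr) + \Delta t^2 \overline R_2(\varphi,y,\Delta t),
\]
so $\overline{\mathcal{A}}_1 \varphi = \tfrac18 {\rm Tr}(Q D^2 \varphi)$, which obviously satisfies $\overline{\mathcal{A}}_1 1 = 0$; the polynomial bound on $\overline R_2$ follows from $\varphi \in \mathcal{C}^{6,\beta}_P$ and the trace-class condition \eqref{eq:alphasup}. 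For $\psi^{\Delta t}(y) = y + \Delta t\, G(y+\tfrac12 \Delta W^Q) + \Delta W^Q$, I expand $G(y+\tfrac12 \Delta W^Q)$ in powers of $\Delta W^Q$ up to third order, substitute into the fourth-order Taylor expansion of $\varphi$ around $y$, and reduce Gaussian moments via Wick/Isserlis to traces against $Q$. The $O(\Delta t)$ contribution is $\mathcal{L}\varphi$ and the $O(\Delta t^2)$ contribution defines $\mathcal{A}_1\varphi$, a finite linear combination of contractions of $D^j\varphi$ ($j=1,\dots,4$) with $G$, $DG$, $D^2G$ and traces of $Q$. The $\mathcal{C}^{2,\beta}_P$-regularity of $\mathcal{A}_1\varphi$ and the polynomial bound on $R_2$ follow from $F \in \mathfrak{X}^{4,\beta}_P$ together with Proposition \ref{propo:Kolmogorov}.

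The heart of the argument is the verification of the order condition. By invariance of $\mu_\star$ under $\mathcal{L}$, one has $\int \mathcal{L}\overline{\mathcal{A}}_1\varphi\, d\mu_\star = 0$, so the condition reduces to
\[
\int \mathcal{A}_1 \varphi\, d\mu_\star \;=\; \tfrac{1}{8}\int {\rm Tr}\bigl(Q D^2 \mathcal{L}\varphi\bigr)\, d\mu_\star.
\]
The crucial ingredient is that the preconditioned dynamics \eqref{eq:precondSPDE} is reversible with respect to $\mu_\star$: since $G(y) = -y - Q\, DV(y)$ and $\mu_\star \propto e^{-2V} d\nu$ with $\nu = \mathcal{N}(0,Q/2)$, the operator $\mathcal{L}$ is symmetric on $L^2(\mu_\star)$ and integration by parts against the Gibbs density produces identities that replace each occurrence of $G$ by a derivative falling on the test function. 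I apply these identities term by term to the explicit formula for $\mathcal{A}_1\varphi$, mirroring the cancellation used in the finite-dimensional Leimkuhler--Matthews analysis of \cite{Leimkuhler13rco, Vilmart15pif}. To keep all manipulations rigorous, the computation is first carried out on the Galerkin truncation \eqref{eq:precondSPDE_K} with invariant distribution $\mu_\star^K$; the $K$-uniform estimates of Proposition \ref{propo:Kolmogorov} on $v$ (and hence on $\Psi$) and the convergence $\mu_\star^K \to \mu_\star$ established in Section \ref{sec:setting} allow to pass to the limit $K\to\infty$. The main obstacle is the bookkeeping of the many terms appearing in $\mathcal{A}_1$ and the careful use of the reversibility identities to produce the required cancellation.

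Finally, the Gaussian exactness $\overline{\mu}^{\Delta t} = \nu$ when $F=0$ is obtained by direct computation: the scheme reduces to the linear Gaussian recursion $Y_{n+1} = (1-\Delta t) Y_n + (1 - \tfrac{\Delta t}{2})\Delta W_n^Q$, whose unique invariant law is centered Gaussian with covariance $\Sigma^{\Delta t} = \frac{(1-\Delta t/2)^2}{2-\Delta t}\, Q$. Since in the stationary regime $\Delta W_n^Q$ is independent of $Y_n$, $\overline Y_n$ is centered Gaussian with covariance $\Sigma^{\Delta t} + \tfrac{\Delta t}{4}Q$, and the algebraic identity $\Sigma^{\Delta t} + \tfrac{\Delta t}{4}Q = \tfrac12 Q$ yields $\overline\mu^{\Delta t} = \nu$ exactly.
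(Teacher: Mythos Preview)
Your proposal is correct and follows essentially the same route as the paper: one-step Taylor expansions to identify $\mathcal{A}_1$ and $\overline{\mathcal{A}}_1$, verification of the order-two condition of Corollary~\ref{cor:order2} via integration by parts against $\mu_\star^K$ (followed by $K\to\infty$), and the direct Gaussian covariance computation for $F=0$. Two minor imprecisions to fix: the Taylor orders should be higher (the paper expands $\varphi$ to sixth order and $G$ to fourth, since a fourth-order expansion of $\varphi$ leaves an integral remainder of size $\Delta t^{5/2}$ rather than $\Delta t^3$), and the $\mathcal{C}^{2,\beta}_P$-regularity of $\mathcal{A}_1\varphi$ follows directly from $F\in\mathfrak{X}^{4,\beta}_P$ and $\varphi\in\mathcal{C}^{6,\beta}_P$, not from Proposition~\ref{propo:Kolmogorov}.
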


The proof of Theorem \ref{theorem:high_order_inv_measure} relies on Corollary~\ref{cor:order2} and the lemmas below on the weak Taylor expansion of method and its postprocessor \eqref{eq:EulerExplicitePostprocessed}, in the spirit of the analysis of \cite[Sect.\ts 8]{Abdulle14hon} and \cite[Sect.\ts 7]{Vilmart15pif} in the finite dimensional context. A key ingredient is to perform the calculations for the finite-dimensional dynamics in $H^K$, and then take the limit $K\rightarrow\infty$.

\begin{remark}
The analysis could be extended for the creation of methods of any high order for the invariant distribution in the spirit of \cite{Abdulle14hon} in the finite dimensional context. As the calculations become tedious, we mention that the algebraic formalism of exotic Butcher series \cite{Bronasco22ebs, Bronasco22cef, Laurent21ata, Laurent20eab} could be extended to the present context and used for simplifying the calculations of the Taylor expansions.
\end{remark}

\begin{lemma}
\label{lemma:second_order_exp}
Under the assumptions of Theorem \ref{theorem:high_order_inv_measure} and for all $\varphi\in \mathcal{C}^{6,\beta}_P$, the local weak expansion of the integrator \eqref{eq:EulerExplicitePostprocessed} is
$$\IE(\varphi(Y_{n+1}^K)|Y_{n}^K=x)= \varphi(x) + \Delta t{\mathcal L} \varphi(x) + \Delta t^2 {\mathcal A}_{1}\varphi(x)+\Delta t^3 R_{2}(\varphi,x,\Delta t)$$ 
with
$$
\begin{aligned}
{\mathcal A}_{1} \varphi&=\frac 12D^2\varphi.(G^K,G^K) + \frac 12 \sum_{i=1} q_iD^3\varphi.(e_i, e_i,G^K) + \frac{1}8 \sum_{i,j=1} q_i q_jD^4\varphi.(e_i,e_i,e_j,e_j)\\
&+\frac18 D\varphi. \sum_{i=1} q_iD^2G^K.(e_i,e_i) + \frac 12 \sum_{i=1} q_iD^2\varphi.(DG^K.e_i, e_i),
\end{aligned}
$$
where $\bigl(e_k\bigr)_{k\in\N}$ is the complete orthonormal system of $H$ considered in Definition~\ref{defi:QAnu} and the remainder satisfies the bound
$$\E|R_{2}(\varphi,x,\Delta t)|\leq C(1+|x|_\beta^\kappa).$$
\end{lemma}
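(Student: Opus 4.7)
The plan is to compute $\IE[\varphi(Y_{n+1}^K)\mid Y_n^K=x]$ by a standard stochastic Taylor expansion, treating the Gaussian increment $\Delta W:=\Delta W_n^{Q,K}$ as a formal quantity of size $\sqrt{\Delta t}$, and then matching coefficients up to and including $\Delta t^2$. Writing
\[
Y_{n+1}^K - x = \Delta W + \Delta t\, G^K\!\bigl(x+\tfrac12\Delta W\bigr),
\]
the first preparatory step is to Taylor-expand $G^K(x+\tfrac12\Delta W)$ around $x$ to a sufficient order: the $\Delta t$ prefactor gains one factor, so it suffices to expand up to $D^3G^K$, with an integral remainder controlled by the hypothesis $F\in\mathfrak X^{4,\beta}_P$. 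This produces a decomposition $Y_{n+1}^K - x = z_0+z_1+z_2+z_3+\ldots$ where $z_0=\Delta W$, $z_1=\Delta t\,G^K(x)$, $z_2=\tfrac{\Delta t}{2}DG^K(x).\Delta W$, $z_3=\tfrac{\Delta t}{8}D^2G^K(x).(\Delta W,\Delta W)$, with successive terms effectively smaller in $\Delta t$.

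Next I would Taylor-expand $\varphi(x+z)$ up to order four in $z$ (plus an integral remainder of order five), which is admissible since $\varphi\in\mathcal C^{6,\beta}_P$. Substituting the expansion of $z$ above and multiplying out yields a finite sum of terms of the form $\Delta t^{p}\, D^{m}\varphi(x).(\cdot,\ldots,\cdot)$ where each slot is filled by $\Delta W$, $G^K(x)$, $DG^K(x).\Delta W$, or $D^2G^K(x).(\Delta W,\Delta W)$. The classification is done by formally counting $\Delta W$ as $\Delta t^{1/2}$: everything of formal order strictly greater than $\Delta t^2$ is absorbed into the remainder $R_2$, while the rest is kept explicitly.

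Taking conditional expectation then reduces to computing Gaussian moments of $\Delta W$. All odd moments vanish, and I would use $\IE\langle \Delta W,h\rangle\langle \Delta W,k\rangle = \Delta t\,\langle Q^K h,k\rangle$ together with the Isserlis/Wick formula $\IE[\langle\Delta W,h_1\rangle\langle\Delta W,h_2\rangle\langle\Delta W,h_3\rangle\langle\Delta W,h_4\rangle]=\Delta t^2[\langle Q^Kh_1,h_2\rangle\langle Q^Kh_3,h_4\rangle+\text{(2 other pairings)}]$. The five surviving contributions at order $\Delta t^2$ then match, term by term, the five summands in the stated ${\mathcal A}_1\varphi$: $\tfrac12 D^2\varphi.(G^K,G^K)$ comes from $z_1z_1$, the $D^3\varphi$ term from the cross pairing of two $z_0$'s with $z_1$ inside $\tfrac16 D^3\varphi.(z,z,z)$, the pure $D^4\varphi$ term from the Wick contraction of four copies of $z_0$ inside $\tfrac1{24}D^4\varphi.(z,z,z,z)$, the $D\varphi.D^2G^K$ term from $D\varphi.z_3$, and the mixed $D^2\varphi.(DG^K.e_i,e_i)$ term from the $z_0z_2$ cross terms in $\tfrac12 D^2\varphi.(z,z)$. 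The order-$\Delta t$ coefficient is likewise $\mathcal L\varphi(x)$, consistent with the Kolmogorov equation~\eqref{eq:Kolmogorov}.

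The main technical point, and the only non-routine step, is the bound on $R_2$. For each term classified as remainder, the integral-remainder forms of the Taylor expansions of $G^K$ and $\varphi$ give a product of derivatives evaluated at a point of the segment $[x, x+\tfrac12\Delta W]$ or $[x, Y_{n+1}^K]$; by $F\in\mathfrak X^{4,\beta}_P$ and $\varphi\in\mathcal C^{6,\beta}_P$ these derivatives are bounded by $C(1+|x|_\beta^\kappa+|\Delta W|_\beta^\kappa)|\cdot|_\beta\cdots|\cdot|_\beta$, with $C,\kappa$ independent of $K$. Combining with $\IE|\Delta W|_\beta^{2m} \le C_m\bigl(\Delta t\,\mathrm{Tr}((-A)^{2\beta}Q)\bigr)^m$, which is finite since $\beta<2\alpha_{\sup}$, and using Hölder's inequality to separate the polynomial growth in $x$ from the high-order moments of $\Delta W$, yields $\E|R_2(\varphi,x,\Delta t)|\le C(1+|x|_\beta^\kappa)$ uniformly in $K$ and $\Delta t\in(0,\Delta t_0]$, completing the proof.
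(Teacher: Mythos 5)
Your overall strategy — a stochastic Taylor expansion of $\varphi(Y_{n+1}^K)$ around $x$, followed by Gaussian moment computations via Wick/Isserlis and a remainder bound via $\mathrm{Tr}\bigl((-A)^{2\beta}Q\bigr)<\infty$ — coincides with the paper's, and your identification of the five contributions to $\mathcal{A}_1\varphi$ (including the inner expansion of $G^K$ to $D^3G^K$ retained with a $D^4G^K$ integral remainder, which after the $\Delta t$ prefactor is indeed $O(\Delta t^3)$) is correct.

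There is, however, a genuine gap in the order at which you truncate the $\varphi$-expansion. You stop at $D^4\varphi$ retained with a $D^5\varphi$ integral remainder,
\[
\int_0^1 \frac{(1-\theta)^4}{4!}\,D^5\varphi(x+\theta z).(z,\dots,z)\,d\theta,\qquad z=Y_{n+1}^K-x.
\]
Since $D^5\varphi$ is evaluated at the \emph{shifted} point $x+\theta z$, you cannot invoke vanishing of odd Gaussian moments to eliminate the $(\Delta W_n^Q)^{\otimes 5}$ contribution; with $z\sim\Delta t^{1/2}$ this remainder is therefore only $O(\Delta t^{5/2})$, not $O(\Delta t^3)$, so the decomposition $\Delta t^3 R_2$ with $\E|R_2|\le C(1+|x|_\beta^\kappa)$ fails (it degrades to $C(1+|x|_\beta^\kappa)\Delta t^{-1/2}$), which would be fatal in the subsequent order-two argument. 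The paper avoids this by retaining the term $D^5\varphi(x)$ at the \emph{fixed} point $x$ — there the pure $(\Delta W_n^Q)^{\otimes 5}$ piece vanishes by parity and the surviving pieces are already $O(\Delta t^3)$ — and stopping at a $D^6\varphi$ integral remainder whose $z^{\otimes 6}$ structure yields $O(\Delta t^3)$ directly; this is precisely what the hypothesis $\varphi\in\mathcal{C}^{6,\beta}_P$ is used for, not merely a comfort margin as your aside suggests. Equivalently one can keep your $D^5$ remainder and perform one further Taylor split $D^5\varphi(x+\theta z)=D^5\varphi(x)+\int_0^1 D^6\varphi(x+s\theta z).(\theta z)\,ds$ inside the integral, then use parity on the first piece; but as written your truncation does not close the argument.
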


\begin{proof}
Following the lines of \cite[Sect.\ts 7]{Vilmart15pif} (where the $q_i$ coefficients were not considered), we perform a Taylor expansion of $\varphi(Y_{n+1})$ around $Y_n$ up to order 6 and a Taylor expansion of $G(Y_{n+1})$ up to order 4, which yields the desired expansion with the remainder:
$$
\begin{aligned}
R_{2}(\varphi,x,\Delta t) &=
\IE
\int_0^1 \frac{(1-\theta)^3}{3!} 
D\varphi(x).D^4G^K(x+\frac \theta2\Delta W_{N}^Q).
(\frac{\frac12\Delta W_{N}^Q}{\sqrt{\Delta t}})^4 d\theta\\
&+
\IE
\int_0^1 \frac{(1-\theta)^5}{5!} 
D^6\varphi(x+\theta\Delta Y_n^K).
(\frac{\Delta Y_n^K}{\sqrt{\Delta t}})^6 d\theta,
\end{aligned}$$
where
$$\Delta Y_n^K=Y_{n+1}^K-Y_{n}^K=\Delta tG^K(Y_{n}^K+\frac12\Delta W_{n}^{Q})+\Delta W_{n}^{Q}.
$$
We first observe using \eqref{eq:alphasup} and $G=-x+QF(x)$:
\[
\E|\Delta W_n^Q|_\beta^\kappa=\E|(-A)^{\beta/2}Q^{1/2}\Delta W_n|^\kappa\leq C\Delta t^{\kappa/2},\quad
|G(x)|_\beta\leq C(1+|x|_\beta^\kappa).
\]
Thus, the increment satisfies
\[
\E|\frac{\Delta Y_n^K}{\sqrt{\Delta t}}|\leq C(1+|x|_\beta^\kappa).
\]
The bound on the remainder is a consequence of the regularity assumptions $\varphi\in \mathcal{C}^{6,\beta}_P$ and $F\in \mathfrak{X}^{4,\beta}_P$.
\end{proof}

\begin{lemma}
\label{lemma:second_order_exp_pp}
Under the assumptions of Theorem \ref{theorem:high_order_inv_measure} and for all $\varphi\in \mathcal{C}^{6,\beta}_P$, the local weak expansion of the postprocessor integrator \eqref{eq:EulerExplicitePostprocessed} is
$$
\IE(\varphi(\overline{Y}_{n}^K)|Y_{n}^K=x)= \varphi(x) + \Delta t\overline{\mathcal{A}}_1 \varphi(x) + \Delta t^2 \overline{R}_{1}(\varphi,x,\Delta t),$$
where
$$
\overline{\mathcal{A}}_{1}\varphi = \frac18 \sum_{i=1} q_iD^2\varphi.(e_i,e_i),
$$
and the remainder satisfies the bound
$$\E[|\overline{R}_{1}(\varphi,x,\Delta t)|]\leq C (1+|x|_\beta^\kappa).$$
\end{lemma}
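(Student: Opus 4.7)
The key simplification is that the postprocessor adds only a pure Gaussian increment: $\overline{Y}_n^K - Y_n^K = \tfrac12 \Delta W_n^{Q,K}$, with no drift. The plan is therefore a Taylor expansion of $\varphi$ around $x$ followed by direct computation of Gaussian moments, with a remainder controlled via $\mathfrak{X}^{M,\beta}_P$-type bounds.

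First, I would write the fourth-order Taylor expansion with integral remainder
\[
\varphi(x+\tfrac12\Delta W_n^{Q,K})=\sum_{k=0}^{3}\frac{1}{k!}D^k\varphi(x).(\tfrac12\Delta W_n^{Q,K})^{\otimes k}+R^{(4)}(x,\Delta W_n^{Q,K}),
\]
where $R^{(4)}(x,h)=\int_0^1\frac{(1-\theta)^3}{3!}D^4\varphi(x+\theta h).(h)^{\otimes 4}\,d\theta$. Then I would take conditional expectations given $Y_n^K=x$ and use the Gaussianity of $\Delta W_n^{Q,K}=\sum_{k=1}^K\sqrt{q_k}\bigl(\beta_k(t_{n+1})-\beta_k(t_n)\bigr)e_k$:
\[
\E[\Delta W_n^{Q,K}]=0,\qquad \E[\Delta W_n^{Q,K}\otimes\Delta W_n^{Q,K}]=\Delta t\sum_{k=1}^{K}q_k\,e_k\otimes e_k,
\]
and all odd moments vanish by symmetry. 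The $k=1$ and $k=3$ contributions therefore drop out, the $k=0$ term yields $\varphi(x)$, and the $k=2$ term gives exactly
\[
\frac12 D^2\varphi(x).\E[(\tfrac12\Delta W_n^{Q,K})^{\otimes 2}]=\frac{\Delta t}{8}\sum_{k=1}^{K}q_k D^2\varphi(x).(e_k,e_k)=\Delta t\,\overline{\mathcal{A}}_1\varphi(x),
\]
after passing to the formal series in $k\ge 1$ (using the $\pi^K$-convention that truncates at $K$).

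The remaining task is to bound $\E|R^{(4)}(x,\Delta W_n^{Q,K})|$ by $C\Delta t^2(1+|x|_\beta^\kappa)$. Since $\varphi\in\mathcal{C}^{6,\beta}_P$ (in particular $\mathcal{C}^{4,\beta}_P$ suffices here), I would estimate
\[
|D^4\varphi(x+\theta h)(h,h,h,h)|\le C(1+|x+\theta h|_\beta^\kappa)|h|_\beta^4\le C(1+|x|_\beta^\kappa+|h|_\beta^\kappa)|h|_\beta^4,
\]
and then use the moment estimate
\[
\E|\Delta W_n^{Q,K}|_\beta^{2m}\le C\bigl(\Delta t\,\mathrm{Tr}((-A)^\beta Q)\bigr)^m,
\]
which is uniform in $K$ because $\beta<2\alpha_{\sup}$ ensures $\mathrm{Tr}((-A)^\beta Q)<\infty$ by \eqref{eq:alphasup}. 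The Cauchy--Schwarz inequality combined with these two ingredients gives $\E|R^{(4)}|\le C\Delta t^2(1+|x|_\beta^\kappa)$, which is the desired bound on $\overline{R}_1(\varphi,x,\Delta t)$.

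The only mild obstacle is the polynomial-in-$|\cdot|_\beta$ regularity bookkeeping: one must check that the composition $x+\theta h$ stays in the class where the $\mathcal{C}^{M,\beta}_P$ bound applies, and that the exponent $\kappa$ is inherited in the output. Everything else is standard Gaussian calculus, and the exactness claim in the Gaussian case $F=0$ then follows from Lemma~\ref{propo:invariant-general} together with the observation that the additional $\tfrac12\Delta W_n^Q$ increment merely shifts a Gaussian invariant law along the same Gaussian structure.
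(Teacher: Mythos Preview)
Your proof is correct and follows essentially the same approach as the paper: a fourth-order Taylor expansion with integral remainder, vanishing of odd Gaussian moments, identification of the $k=2$ term as $\Delta t\,\overline{\mathcal{A}}_1\varphi$, and a bound on the $D^4\varphi$ remainder via the $\mathcal{C}^{6,\beta}_P$ estimate together with $\E|\Delta W_n^{Q,K}|_\beta^{2m}\le C\Delta t^m$. The paper's own proof is in fact terser---it just writes down the remainder and refers back to the argument of Lemma~\ref{lemma:second_order_exp}---so your version is a more explicit rendering of the same computation.

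Two minor remarks. First, your closing sentence about exactness in the Gaussian case $F=0$ does not belong to this lemma; that claim is part of Theorem~\ref{theorem:high_order_inv_measure} and is proved there by a direct variance computation, not via Lemma~\ref{propo:invariant-general}. Second, the relevant regularity class for $\varphi$ is $\mathcal{C}^{M,\beta}_P$, not $\mathfrak{X}^{M,\beta}_P$ (which is reserved for vector fields); you use the right one in the body of the argument, so this is only a slip in the opening sentence.
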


\begin{proof}
The remainder is given by
$$
\begin{aligned}
\overline{R}_{1}(\varphi,x,\Delta t) &=
\IE
\int_0^1 \frac{(1-\theta)^3}{3!} 
D^4\varphi(x+\theta\Delta W_n^Q).
(\frac{\Delta W_n^Q}{\sqrt{\Delta t}})^4 d\theta.
\end{aligned}$$
and the proof concludes analogously to that of Lemma \ref{lemma:second_order_exp}.
\end{proof}

\begin{lemma}
The operators $\mathcal{A}_1$ and $\overline{\mathcal{A}}_{1}$ from Lemma \ref{lemma:second_order_exp} and \ref{lemma:second_order_exp_pp} satisfy the second order condition of Proposition \ref{propo:order2}:
$$
\left\langle\mathcal{A}_{1}\varphi + [\mathcal L, \overline{\mathcal{A}}_{1}] \varphi \right\rangle =0.
$$
\end{lemma}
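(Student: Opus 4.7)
The whole calculation is carried out in the finite dimensional Galerkin space $H^{K}$, where the Kolmogorov operator $\mathcal{L}$, the auxiliary operators $\mathcal{A}_1, \overline{\mathcal{A}}_{1}$, and the Gibbs measure $\mu_\star^K$ (with density $Z_K^{-1}e^{-U}$ on $H^K$, where $U(y)=\langle y,-Ay\rangle+2V(y)$) are all classical objects. The core statement is an algebraic identity, and the passage to the limit $K\to\infty$ is done at the very end, using the uniform-in-$K$ moment bounds on $\mu_\star^K$ together with the regularity assumption $F\in\mathfrak{X}^{4,\beta}_P$ and $\varphi\in\mathcal{C}^{6,\beta}_P$ (which guarantees all integrands are dominated uniformly in $K$).

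The first step is a clean algebraic reduction. Split $\mathcal{L}=\mathcal{T}+\mathcal{S}$, with $\mathcal{T}\varphi=D\varphi.G$ and $\mathcal{S}\varphi=\frac12\sum q_i D^2\varphi.(e_i,e_i)$, and notice that the postprocessor generator is exactly $\overline{\mathcal{A}}_{1}=\tfrac14\mathcal{S}$. Using the invariance property $\int \mathcal{L}\psi\,d\mu_\star^K=0$ (applied to $\psi=\overline{\mathcal{A}}_1\varphi$), one obtains
\[
\int[\mathcal{L},\overline{\mathcal{A}}_{1}]\varphi\,d\mu_\star^K=-\int\overline{\mathcal{A}}_{1}(\mathcal{L}\varphi)\,d\mu_\star^K=\tfrac14\int\mathcal{T}(\mathcal{L}\varphi)\,d\mu_\star^K=\tfrac14\int D(\mathcal{L}\varphi).G\,d\mu_\star^K,
\]
where the second equality uses $\int\mathcal{L}(\mathcal{L}\varphi)d\mu_\star^K=0$. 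In parallel, expanding $\mathcal{L}^2=(\mathcal{T}+\mathcal{S})^2$ via the Leibniz rule and comparing term by term with the formula for $\mathcal{A}_1$ given by Lemma~\ref{lemma:second_order_exp} yields the pointwise identity
\[
\mathcal{A}_1\varphi=\tfrac12\mathcal{L}^2\varphi-\tfrac12D\varphi.DG.G-\tfrac18D\varphi.\textstyle\sum_i q_iD^2G.(e_i,e_i),
\]
so that, since $\int \mathcal{L}^2\varphi\,d\mu_\star^K=0$, the problem reduces to proving the integral identity
\[
(\star)\quad 2\int D\varphi.DG.G\,d\mu_\star^K+\int\!\textstyle\sum_i q_i D^2\varphi.(DG.e_i,e_i)\,d\mu_\star^K+\int D\varphi.\!\textstyle\sum_i q_iD^2G.(e_i,e_i)\,d\mu_\star^K=0.
\]
The algebra leading to $(\star)$ is the main bookkeeping step: writing $D(\mathcal{L}\varphi).G$ in terms of $D^2\varphi.(G,G)$, $D\varphi.DG.G$ and $\sum q_iD^3\varphi.(e_i,e_i,G)$, and then subtracting from a second identity obtained by applying the invariance to the test function $D\varphi.G$, i.e.\ $\int\mathcal{L}(D\varphi.G)d\mu_\star^K=0$. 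All third and fourth order derivative terms in $\varphi$ cancel after this reduction.

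The decisive input, and the main obstacle, is the gradient structure, which I would exploit in the form $G=-\tfrac12 QDU$, so that $DG=-\tfrac12 QD^2U$ and $D^2G=-\tfrac12 QD^3U$. Substituting these into $(\star)$, most terms involving $D^2U,D^3U$ can be paired via the Gaussian integration by parts formula for the Gibbs density, i.e.\ $\int D\varphi.v\,d\mu_\star^K=\int\varphi(v.DU-\operatorname{div}v)d\mu_\star^K$ for any smooth vector field $v$, applied with $v=QD^2U\cdot G$ and $v=QD^2U\cdot e_i$. The key simplification is $v.DU=\langle v,-2Q^{-1}G\rangle$ for $v$ of the form $Qw$ (since $G=-\tfrac12 QDU$), which turns the Dirichlet boundary terms into inner products against $G$ that recombine with the remaining terms of $(\star)$. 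The identity then collapses to $0$ after combining with the scalar invariance $\int D\varphi.G\,d\mu_\star^K=-\int\mathcal{S}\varphi\,d\mu_\star^K$.

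Finally, each integrand in $(\star)$ is controlled by $C(1+|y|_\beta^\kappa)$ uniformly in $K$ thanks to the regularity classes $F\in\mathfrak{X}^{4,\beta}_P$ and $\varphi\in\mathcal{C}^{6,\beta}_P$, and $\mu_\star^K\to\mu_\star$ with uniform moments (as used in Section~\ref{sec:setting}), so dominated convergence provides the passage $K\to\infty$ and proves the stated relation $\langle\mathcal{A}_1\varphi+[\mathcal{L},\overline{\mathcal{A}}_1]\varphi\rangle_{\mu_\star}=0$. The bulk of the work is the algebraic step yielding $(\star)$ and its cancellation via the gradient structure; every other piece is a routine use of the invariance relation.
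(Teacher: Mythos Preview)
Your argument is correct, and your reduction to the single identity $(\star)$ is sound: the pointwise decomposition $\mathcal{A}_1\varphi=\tfrac12\mathcal{L}^2\varphi-\tfrac12 D\varphi.DG.G-\tfrac18\sum_iq_iD\varphi.D^2G.(e_i,e_i)$ is right, and combining it with the invariance relations $\int\mathcal{L}(\mathcal{L}\varphi)d\mu_\star^K=0$ and $\int\mathcal{L}(D\varphi.G)d\mu_\star^K=0$ indeed collapses everything to $(\star)$. However, your plan for proving $(\star)$ is more elaborate than necessary: one does not need to substitute $G=-\tfrac12QDU$ and chase vector fields $v=QD^2U\cdot G$ etc. A single application of the basic integration-by-parts formula $\langle\partial_i\psi_1\,\psi_2\rangle_{\mu_\star^K}=\langle-\psi_1\partial_i\psi_2-2q_i^{-1}\psi_1\psi_2 G_i\rangle_{\mu_\star^K}$ with $\psi_1=\partial_j\varphi$ and $\psi_2=\partial_iG_j$, followed by multiplication by $q_i$ and summation over $i,j$, gives $(\star)$ directly.

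The paper takes a shorter and more concrete route. It computes the commutator $[\mathcal{L},\overline{\mathcal{A}}_1]$ \emph{pointwise}, noting that since $\overline{\mathcal{A}}_1=\tfrac18\sum_iq_iD^2(\cdot).(e_i,e_i)$ commutes with the diffusion part of $\mathcal{L}$, only the drift contributes, yielding immediately
\[
[\mathcal{L},\overline{\mathcal{A}}_1]\varphi=-\tfrac18\textstyle\sum_i q_iD\varphi.D^2G.(e_i,e_i)-\tfrac14\sum_iq_iD^2\varphi.(DG.e_i,e_i).
\]
It then applies the integration-by-parts formula twice to $\langle\mathcal{A}_1\varphi\rangle_{\mu_\star^K}$ to eliminate the $D^4\varphi$ and $D^3\varphi$ terms, reducing $\langle\mathcal{A}_1\varphi\rangle_{\mu_\star^K}$ to precisely the negative of the above expression. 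Your approach trades this direct computation for a more structural one (invariance used on $\mathcal{L}\varphi$, $\mathcal{L}^2\varphi$ and $D\varphi.G$, plus the $\tfrac12\mathcal{L}^2$ identity), which has the virtue of explaining \emph{why} the leading coefficient of $\mathcal{A}_1$ matches the exact flow, but involves more bookkeeping before the final integration by parts. Both arguments rest on the same integration-by-parts identity encoding the gradient structure.
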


\begin{proof}
Considering the notations $\left\langle \varphi \right\rangle_{\mu_{\star}} =\int_{H} \varphi(x) d\mu_{\star}(x)$, $G_j^K=\left\langle G^K, e_j \right\rangle$, $\partial_j\varphi = \varphi'e_j$, and using repeatedly the integration by parts formula, 
\[
\left\langle \partial_j\varphi_1 \, \varphi_2 \right\rangle_{\mu_{\star}}
=
\left\langle -\varphi_1 \partial_j\varphi_2\, -2\varphi_1\varphi_2 q_j^{-1} G_j^K \right\rangle_{\mu_{\star}}
\]
a calculation yields
$$
\begin{aligned}
\left\langle \sum_{i,j=1} q_i q_j D^4 \varphi.(e_i,e_i,e_j,e_j) \right\rangle_{\mu_{\star}} &= \left\langle -2\sum_{i=1} q_i D^3\varphi.(e_i,e_i,G^K) \right\rangle_{\mu_{\star}},\\
\left\langle \sum_{i=1} q_i D^3 \varphi.(e_i, e_i,G^K) \right\rangle_{\mu_{\star}} &= 
\left\langle -\sum_{i=1} q_i D^2 \varphi.(DG^K.e_i,e_i)-2 D^2 \varphi.(G^K,G^K) \right\rangle_{\mu_{\star}}.
\end{aligned}
$$
We deduce
$$
\begin{aligned}
\left\langle\mathcal A_{1}\varphi \right\rangle&=
\left\langle
\frac18 \sum_{i=1} q_i D\varphi.(D^2G^K.(e_i,e_i)) + \frac14\sum_{i=1} q_iD^2\varphi.(DG^K.e_i,e_i)
\right\rangle
\end{aligned}
$$
Using the calculation for the commutator operator,
$$
[\mathcal L, \overline{\mathcal{A}}_{1}] \varphi = 
-\frac18 \sum_{i=1} q_i D\varphi.(D^2G^K.(e_i,e_i)) - \frac14\sum_{i=1} q_iD^2\varphi.(DG^K.e_i,e_i),
$$
this concludes the proof.
\end{proof}

\begin{proof}[Proof of Theorem \ref{theorem:high_order_inv_measure}]
It remains to check the statement that in the Gaussian case $F=0$, one finds $\overline{\mu}^{\Delta t}=\nu$ for all $\Delta t$. First, if $\bigl(Y_n\bigr)_{n\ge 0}$ is given by~\eqref{eq:EulerExplicitePostprocessed}, then
\[
Y_{n+1}=(1-\Delta t)Y_n+(1-\frac{\Delta t}{2})\Delta W_n^Q.
\]
Thus, if $\Delta t<1$, the numerical invariant distributions are Gaussian: $\mu^{\Delta t}=\mathcal{N}(0,\frac{\sigma(\Delta t)^2}{2}Q)$, with
\[
\sigma(\Delta t)^2=\frac{2\Delta t(1-\frac{\Delta t}{2})^2}{1-(1-\Delta t)^2}=1-\frac{\Delta t}{2},
\]
and $\overline{\mu}^{\Delta t}=\mathcal{N}(0,\frac{\sigma(\Delta t)^2+\frac{\Delta t}{2}}{2}Q)=\mathcal{N}(0,\frac12 Q)=\nu$.
\end{proof}

\begin{remark}
We emphasize that alternatively to method \eqref{eq:EulerExplicitePostprocessed}, one can simply consider weak order 2 integrators applied to the preconditioned dynamics \eqref{eq:precondSPDE}, for instance, the Runge-Kutta 2 (RK2) method with two evaluations of $G$ per timestep,
\begin{equation}\label{eq:RK2}
\begin{aligned}
\hat{Y}_n&=Y_n+\Delta tG(Y_n)+\Delta W_n^Q,\\
Y_{n+1}&=Y_n+\frac{\Delta t}{2}\bigl(G(Y_n)+G(\hat{Y}_n)\bigr)+\Delta W_n^Q.
\end{aligned}
\end{equation}
Note, however, that, in contrast to method \eqref{eq:EulerExplicitePostprocessed}, method \eqref{eq:RK2} is not exact for the invariant distribution in the Gaussian case $F=0$.
Alternatively, one can also consider the postprocessed integrator from \cite{Brehier_Vilmart:16} based on the semilinear implicit Euler method
\begin{equation}\label{eq:EulerImplicitePostprocessed}
\begin{aligned}
Y_{n+1}&=\frac{1}{1+\Delta t}Y_n+\frac{\Delta t}{1+\Delta t}QF\bigl(Y_n+\frac{1}{2(1+\Delta t)}\Delta W_n^Q\bigr)+\frac{1}{1+\Delta t}\Delta W_n^Q,\\
\overline{Y}_n&=Y_n+\frac{1}{2(1+\frac{\Delta t}{2})^{\frac12}}\Delta W_n^Q.
\end{aligned}
\end{equation}
An advantage of method \eqref{eq:EulerImplicitePostprocessed}, with only one evaluation of $F$ per timestep, is that it has no timestep restriction for stability and it is exact for the invariant distribution in the Gaussian case $F=0$.
\end{remark}

\section{Numerical experiments}
\label{section:num}
\begin{figure}[t]
	\begin{center}
		\includegraphics[scale=0.45]{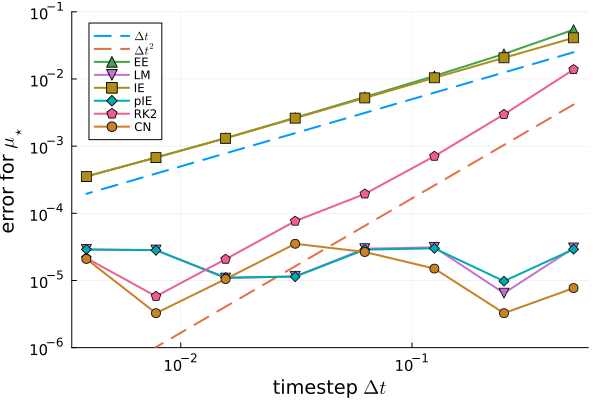}
		\end{center}
	\caption{Error for sampling the invariant distribution of the preconditioned SPDE \eqref{eq:precondSPDE} in the spatial domain $[0,1]$ with $A=\Delta$ and $F(x)=0$ using the explicit and implicit Euler methods EE and IE (\eqref{eq:thetascheme} with $\theta=0$ and $\theta=1$), their postprocessed counterparts the Leimkuhler-Matthews method LM \eqref{eq:EulerExplicitePostprocessed} and pIE \eqref{eq:EulerImplicitePostprocessed}, the RK2 method \eqref{eq:RK2} and the Crank-Nicholson scheme CN \eqref{eq:thetascheme} with $\theta=\frac{1}{2}$, with the test function $\varphi(x)=\exp(-\norme{x}_{L^2}^2)$ and $M=10^8$ trajectories.}
	\label{figure:Error_Inv_meas_no_F}
\end{figure}

Let us now illustrate numerically the theoretical results.
We give error curves for the new methods of the article for the preconditioned SDPE \eqref{eq:precondSPDE} to confirm the theoretical orders of convergence.
Then, we observe the effect of different preconditioning on the order of convergence and introduce an alternative preconditioner based on Chebyshev polynomials.

We first consider the postprocessed explicit and implicit Euler methods \eqref{eq:EulerExplicitePostprocessed}-\eqref{eq:EulerImplicitePostprocessed} for solving the preconditioned SPDE \eqref{eq:precondSPDE} in the spatial domain $[0,1]$ with $A=\Delta$.
We apply a standard finite difference approximation of the Laplacian with the stepsize $\Delta x=0.02$, the final time $T=10$ which is sufficient to reach equilibrium, and different timesteps $\Delta t$ to observe the rates of convergence.
We consider $M=10^8$ trajectories (using 450 CPUs) and plot an estimate of the error for the invariant distribution by using the standard mean estimator:
\[\overline{J}_{\Delta t}=\frac{1}{M}\sum_{m=1}^M \varphi(Y_N^{(m)})\approx \E[\varphi(Y_N)],\]
where $Y_N^{(m)}$ is the $m$-th realisation of the integrator at time $T$ and we consider the test function $\varphi(x)=\exp(-\norme{x}_{L^2}^2)$.
We compare this approximation with a reference value for $\int_H\varphi d\mu_\star$ given by the approximation $\overline{J}_{2^{-9}}$ with the Leimkuhler-Matthews method \eqref{eq:EulerExplicitePostprocessed}.
We emphasize that the approach could be combined with variance reduction techniques, such as Multi-Level Monte-Carlo methods, though this is out of the scope of the present paper.

We observe in Figure \ref{figure:Error_Inv_meas_no_F} the Gaussian case where $F=0$. The integrators EE, IE, and RK2 have the expected order of convergence for the invariant distribution. Moreover, the other integrators have no bias for the invariant distribution and we observe only the Monte-Carlo error.
We repeat the same experiment, this time with the Lipschitz nonlinearity $F(x)=-x+\cos(x)$. We observe the predicted order for the invariant distribution of the methods in Figure \ref{figure:Error_Inv_meas}, which illustrates Theorem \ref{theorem:high_order_inv_measure}. It is remarkable that the Leimkuhler-Matthews scheme with preprocessing \eqref{eq:EulerExplicitePostprocessed} yields the best accuracy among the tested methods.
\begin{figure}[t]
		\begin{center}
		\includegraphics[scale=0.45]{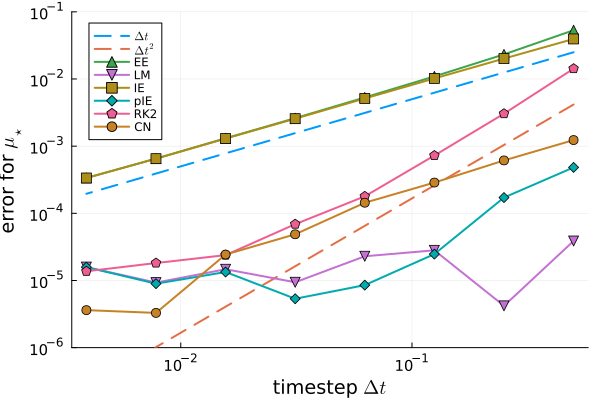}
		\end{center}
	\caption{Error for sampling the invariant distribution of the preconditioned SPDE \eqref{eq:precondSPDE} in the spatial domain $[0,1]$ with $A=\Delta$ and $F(x)=-x+\cos(x)$ using the explicit and implicit Euler methods EE and IE (\eqref{eq:thetascheme} with $\theta=0$ and $\theta=1$), their postprocessed counterparts the Leimkuhler-Matthews method LM \eqref{eq:EulerExplicitePostprocessed} and pIE \eqref{eq:EulerImplicitePostprocessed}, the RK2 method \eqref{eq:RK2} and the Crank-Nicholson scheme CN \eqref{eq:thetascheme} with $\theta=\frac{1}{2}$, with the test function $\varphi(x)=\exp(-\norme{x}_{L^2}^2)$ and $M=10^8$ trajectories.}
	\label{figure:Error_Inv_meas}
\end{figure}

\begin{figure}[t!]
		\begin{center}
		\includegraphics[scale=0.45]{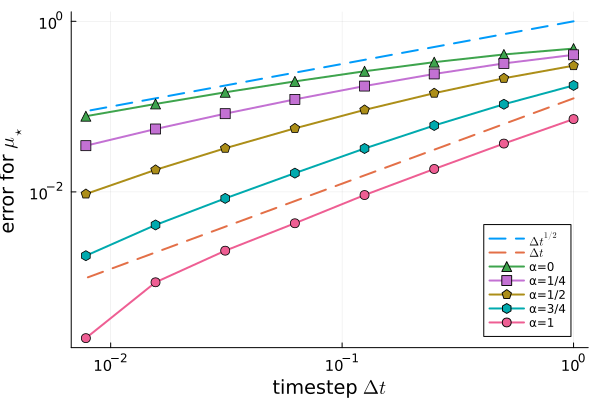}
		\end{center}
	\caption{Error of the integrator \eqref{equation:linear_implicit_Euler_general_preconditioning} for sampling the invariant distribution of the preconditioned SPDE \eqref{eq:precondSPDE-general} in the spatial domain $[0,1]$ with $A=\Delta$, $F(x)=-x+\cos(x)$, $\mathcal{P}=(-A)^{-\alpha}$ for different $\alpha$, the test function $\varphi(x)=\exp(-\norme{x}_{L^2}^2)$ and $M=10^7$ trajectories.}
	\label{figure:Error_preconditioning}
\end{figure}
Let us now observe the behaviour of a given scheme for approximating the preconditioned dynamics \eqref{eq:precondSPDE-general} with $\mathcal{P}=(-A)^{-\alpha}$ for different $\alpha\in [0,1]$ and the nonlinearity $F(x)=-x+\cos(x)$. We apply the following time-discretisation with the same finite difference approximation as previously:
\begin{equation}
\label{equation:linear_implicit_Euler_general_preconditioning}
Y_{n+1}=(I-\Delta t \mathcal{P} A)^{-1}(Y_n+\Delta t \mathcal{P} F(Y_n)+\Delta W_n^{\mathcal{P}}).
\end{equation}
The reference values are given by the integrator \eqref{equation:linear_implicit_Euler_general_preconditioning} with $\Delta_t=2^{-8}$ and $\alpha=1$. We observe on Figure \ref{figure:Error_preconditioning} that the preconditioner improves the order of convergence from $1/2$ to $1$ as $\alpha$ goes from $0$ to $1$, to emphasise the advantage of the preconditioner enhancing the convergence rate, here illustrated with the method  \eqref{equation:linear_implicit_Euler_general_preconditioning}.

\smallskip

\noindent \textbf{Acknowledgements.}\
C.-E.B. acknowledges the support from the program ANR-19-CE40-0016 (SIMALIN) and ANR-24-CE40-3786 (SOS2ID).
A.B.L. is grateful for the support of the French programs ANR-11-LABX-0020-0 (Labex Lebesgue) and ANR-25-CE40-2862-01 (MaStoC).
A.D. acknowledges the support from the program ANR-11-LABX-0020-0 (Labex Lebesgue) and ANR-19-CE40-0019 (ADA).
G.V. was partially supported by the Swiss National Science Foundation, projects No 200020\_214819 and No. 200020\_192129.
The computations were performed at the University of Geneva on the Baobab cluster using the Julia
programming language.

\bibliographystyle{abbrv}
\bibliography{biblio,Ma_Bibliographie,gilles}

\begin{thebibliography}{10}

\bibitem{Abdulle17oes}
A.~Abdulle, I.~Almuslimani, and G.~Vilmart.
\newblock Optimal explicit stabilized integrator of weak order 1 for stiff and
  ergodic stochastic differential equations.
\newblock {\em SIAM/ASA J. Uncertain. Quantif.}, 6(2):937--964, 2018.

\bibitem{Abdulle12hwo}
A.~Abdulle, D.~Cohen, G.~Vilmart, and K.~C. Zygalakis.
\newblock High weak order methods for stochastic differential equations based
  on modified equations.
\newblock {\em SIAM J. Sci. Comput.}, 34(3):A1800--A1823, 2012.

\bibitem{Abdulle19act}
A.~Abdulle, G.~A. Pavliotis, and G.~Vilmart.
\newblock Accelerated convergence to equilibrium and reduced asymptotic
  variance for {L}angevin dynamics using {S}tratonovich perturbations.
\newblock {\em C. R. Math. Acad. Sci. Paris}, 357(4):349--354, 2019.

\bibitem{Abdulle14hon}
A.~Abdulle, G.~Vilmart, and K.~C. Zygalakis.
\newblock High order numerical approximation of the invariant measure of
  ergodic {SDE}s.
\newblock {\em SIAM J. Numer. Anal.}, 52(4):1600--1622, 2014.

\bibitem{Andersson_Kruse_Larsson:16}
A.~Andersson, R.~Kruse, and S.~Larsson.
\newblock Duality in refined {S}obolev-{M}alliavin spaces and weak
  approximation of {SPDE}.
\newblock {\em Stoch. Partial Differ. Equ. Anal. Comput.}, 4(1):113--149, 2016.

\bibitem{Best_Hummer_2010}
R.~B. Best and G.~Hummer.
\newblock Coordinate-dependent diffusion in protein folding.
\newblock {\em Proc. Natl. Acad. Sci. U.S.A.}, 107(3):1088--1093, 2010.

\bibitem{Boyaval-Martel-Reygner:22}
S.~Boyaval, S.~Martel, and J.~Reygner.
\newblock Finite-volume approximation of the invariant measure of a viscous
  stochastic scalar conservation law.
\newblock {\em IMA J. Numer. Anal.}, 42(3):2710--2770, 2022.

\bibitem{Brehier:14}
C.-E. Br{\'e}hier.
\newblock Approximation of the invariant measure with an {E}uler scheme for
  stochastic {PDE}s driven by space-time white noise.
\newblock {\em Potential Anal.}, 40(1):1--40, 2014.

\bibitem{Brehier:17}
C.-E. Br\'ehier.
\newblock Influence of the regularity of the test functions for weak
  convergence in numerical discretization of {SPDE}s.
\newblock {\em J. Complexity}, 56:101424, 15, 2020.

\bibitem{Brehier:22}
C.-E. Br\'ehier.
\newblock Approximation of the invariant distribution for a class of ergodic
  {SPDE}s using an explicit tamed exponential {E}uler scheme.
\newblock {\em ESAIM Math. Model. Numer. Anal.}, 56(1):151--175, 2022.

\bibitem{Brehier:25focm}
C.-E. Br\'ehier.
\newblock Analysis of a modified regularity-preserving {E}uler scheme for
  parabolic semilinear {SPDE}s: total variation error bounds for the numerical
  approximation of the invariant distribution.
\newblock {\em Found. Comput. Math.}, 25(2):511--586, 2025.

\bibitem{Brehier_Debussche:18}
C.-E. Br\'{e}hier and A.~Debussche.
\newblock Kolmogorov equations and weak order analysis for {SPDE}s with
  nonlinear diffusion coefficient.
\newblock {\em J. Math. Pures Appl. (9)}, 119:193--254, 2018.

\bibitem{Brehier-Goudenege:20}
C.-E. Br\'ehier and L.~Gouden\`ege.
\newblock Weak convergence rates of splitting schemes for the stochastic
  {A}llen-{C}ahn equation.
\newblock {\em BIT}, 60(3):543--582, 2020.

\bibitem{BG25}
C.-E. Br\'{e}hier and L.~Gouden\`{e}ge.
\newblock High order sampling of the invariant distribution of {SPDE}s with
  non-globally {L}ipschitz coefficients.
\newblock {\em In preparation}, 2025.

\bibitem{Brehier_Hairer-Stuart:18}
C.-E. Br\'{e}hier, M.~Hairer, and A.~M. Stuart.
\newblock Weak error estimates for trajectories of {SPDE}s under spectral
  {G}alerkin discretization.
\newblock {\em J. Comput. Math.}, 36(2):159--182, 2018.

\bibitem{Brehier_Kopec:17}
C.-E. Br\'{e}hier and M.~Kopec.
\newblock Approximation of the invariant law of {SPDE}s: error analysis using a
  {P}oisson equation for a full-discretization scheme.
\newblock {\em IMA J. Numer. Anal.}, 37(3):1375--1410, 2017.

\bibitem{Brehier_Vilmart:16}
C.-E. Br{\'e}hier and G.~Vilmart.
\newblock High {O}rder {I}ntegrator for {S}ampling the {I}nvariant
  {D}istribution of a {C}lass of {P}arabolic {S}tochastic {PDE}s with
  {A}dditive {S}pace-{T}ime {N}oise.
\newblock {\em SIAM J. Sci. Comput.}, 38(4):A2283--A2306, 2016.

\bibitem{Bronasco2025PhD}
E.~Bronasco.
\newblock {\em Algebraic structures and numerical methods for invariant measure
  sampling of {L}angevin dynamics}.
\newblock PhD thesis, 2025.
\newblock PhD thesis, Univ. Geneva.

\bibitem{Bronasco22ebs}
E.~Bronasco.
\newblock Exotic {B}-series and {S}-series: algebraic structures and order
  conditions for invariant measure sampling.
\newblock {\em Found. Comput. Math.}, 25(1):271--301, 2025.

\bibitem{Bronasco22cef}
E.~Bronasco and A.~Laurent.
\newblock {H}opf algebra structures for the backward error analysis of ergodic
  stochastic differential equations.
\newblock {\em Submitted}, 2024.
\newblock arXiv:2407.07451, (47 pages).

\bibitem{Bronasco25els}
E.~Bronasco, B.~Leimkuhler, D.~Phillips, and G.~Vilmart.
\newblock Efficient {L}angevin sampling with position-dependent diffusion.
\newblock {\em Submitted}, 2025.
\newblock arXiv:2501.02943, (31 pages).

\bibitem{Burrage00oco}
K.~Burrage and P.~M. Burrage.
\newblock Order conditions of stochastic {R}unge-{K}utta methods by {B}-series.
\newblock {\em SIAM J. Numer. Anal.}, 38(5):1626--1646, 2000.

\bibitem{Cai-Chen-Hong-Zhou}
M.~Cai, C.~Chen, J.~Hong, and T.~Zhou.
\newblock Long-time dynamics of stochastic wave equation with dissipative
  damping and its full discretization: Exponential ergodicity and strong law of
  large numbers.
\newblock {\em arXiv preprint arXiv:2402.01137}, 2024.

\bibitem{Chen-Dang-Hong-Zhou:23}
C.~Chen, T.~Dang, J.~Hong, and T.~Zhou.
\newblock C{LT} for approximating ergodic limit of {SPDE}s via a full
  discretization.
\newblock {\em Stochastic Process. Appl.}, 157:1--41, 2023.

\bibitem{Chen-Hong-Ji-Liang:25}
C.~Chen, J.~Hong, L.~Ji, and G.~Liang.
\newblock Invariant measures of stochastic {M}axwell equations and ergodic
  numerical approximations.
\newblock {\em J. Differential Equations}, 416:1899--1959, 2025.

\bibitem{Chen-Hong-Liang}
C.~Chen, J.~Hong, and G.~Liang.
\newblock Long-time weak convergence analysis of a semi-discrete scheme for
  stochastic {M}axwell equations.
\newblock {\em arXiv preprint arXiv:2403.09293}, 2024.

\bibitem{Chen-Hong-Wang:17}
C.~Chen, J.~Hong, and X.~Wang.
\newblock Approximation of invariant measure for damped stochastic nonlinear
  {S}chr\"{o}dinger equation via an ergodic numerical scheme.
\newblock {\em Potential Anal.}, 46(2):323--367, 2017.

\bibitem{Chen-Gan-Wang:20}
Z.~Chen, S.~Gan, and X.~Wang.
\newblock A full-discrete exponential {E}uler approximation of the invariant
  measure for parabolic stochastic partial differential equations.
\newblock {\em Appl. Numer. Math.}, 157:135--158, 2020.

\bibitem{Conus_Jentzen_Kurniawan:19}
D.~Conus, A.~Jentzen, and R.~Kurniawan.
\newblock Weak convergence rates of spectral {G}alerkin approximations for
  {SPDE}s with nonlinear diffusion coefficients.
\newblock {\em Ann. Appl. Probab.}, 29(2):653--716, 2019.

\bibitem{Cotter_Roberts_Stuart_White:13}
S.~L. Cotter, G.~O. Roberts, A.~M. Stuart, and D.~White.
\newblock M{CMC} methods for functions: modifying old algorithms to make them
  faster.
\newblock {\em Statist. Sci.}, 28(3):424--446, 2013.

\bibitem{Cui-Hong-Sun:21}
J.~Cui, J.~Hong, and L.~Sun.
\newblock Weak convergence and invariant measure of a full discretization for
  parabolic {SPDE}s with non-globally {L}ipschitz coefficients.
\newblock {\em Stochastic Process. Appl.}, 134:55--93, 2021.

\bibitem{Cui_Tong_Zahm_2024}
T.~Cui, X.~Tong, and O.~Zahm.
\newblock Optimal {R}iemannian metric for {P}oincar\'e inequalities and how to
  ideally precondition {L}angevin dymanics.
\newblock {\em Preprint}, 2024.
\newblock arXiv:2404.02554 (28 pages).

\bibitem{DPZ}
G.~Da~Prato and J.~Zabczyk.
\newblock {\em Stochastic equations in infinite dimensions}, volume~44 of {\em
  Encyclopedia of Mathematics and its Applications}.
\newblock Cambridge University Press, Cambridge, 1992.

\bibitem{Debussche:11}
A.~Debussche.
\newblock Weak approximation of stochastic partial differential equations: the
  nonlinear case.
\newblock {\em Math. Comp.}, 80(273):89--117, 2011.

\bibitem{Debussche_Printems:09}
A.~Debussche and J.~Printems.
\newblock Weak order for the discretization of the stochastic heat equation.
\newblock {\em Math. Comp.}, 78(266):845--863, 2009.

\bibitem{Deng-Wang-Cao}
N.~Deng, Y.~Wang, and W.~Cao.
\newblock Ergodicity and invariant measure approximation of the stochastic
  {C}ahn-{H}illiard equation via an explicit fully discrete scheme.
\newblock {\em arXiv preprint arXiv:2512.01621}, 2025.

\bibitem{Duncan16vru}
A.~B. Duncan, T.~Leli\`evre, and G.~A. Pavliotis.
\newblock Variance reduction using nonreversible {L}angevin samplers.
\newblock {\em J. Stat. Phys.}, 163(3):457--491, 2016.

\bibitem{Giles08mmc}
M.~B. Giles.
\newblock Multilevel {M}onte {C}arlo path simulation.
\newblock {\em Oper. Res.}, 56(3):607--617, 2008.

\bibitem{Giles15mmc}
M.~B. Giles.
\newblock Multilevel {M}onte {C}arlo methods.
\newblock {\em Acta Numer.}, 24:259--328, 2015.

\bibitem{GlattHoltz-Mondaini:25}
N.~E. Glatt-Holtz and C.~F. Mondaini.
\newblock Long-term accuracy of numerical approximations of {SPDE}s with the
  stochastic {N}avier-{S}tokes equations as a paradigm.
\newblock {\em IMA J. Numer. Anal.}, 45(3):1648--1742, 2025.

\bibitem{Hairer_Stuart_Voss:09}
M.~Hairer, A.~Stuart, and J.~Vo\ss.
\newblock Sampling conditioned diffusions.
\newblock In {\em Trends in stochastic analysis}, volume 353 of {\em London
  Math. Soc. Lecture Note Ser.}, pages 159--185. Cambridge Univ. Press,
  Cambridge, 2009.

\bibitem{Hairer_Stuart_Vollmer:14}
M.~Hairer, A.~M. Stuart, and S.~J. Vollmer.
\newblock Spectral gaps for a {M}etropolis-{H}astings algorithm in infinite
  dimensions.
\newblock {\em Ann. Appl. Probab.}, 24(6):2455--2490, 2014.

\bibitem{Hairer-Stuart-Voss:07}
M.~Hairer, A.~M. Stuart, and J.~Voss.
\newblock Analysis of {SPDE}s arising in path sampling. {II}. {T}he nonlinear
  case.
\newblock {\em Ann. Appl. Probab.}, 17(5-6):1657--1706, 2007.

\bibitem{Hairer-Stuart-Voss:05}
M.~Hairer, A.~M. Stuart, J.~Voss, and P.~Wiberg.
\newblock Analysis of {SPDE}s arising in path sampling. {I}. {T}he {G}aussian
  case.
\newblock {\em Commun. Math. Sci.}, 3(4):587--603, 2005.

\bibitem{Hong-Wang:19}
J.~Hong and X.~Wang.
\newblock {\em Invariant measures for stochastic nonlinear {S}chr\"odinger
  equations}, volume 2251 of {\em Lecture Notes in Mathematics}.
\newblock Springer, Singapore, 2019.
\newblock Numerical approximations and symplectic structures.

\bibitem{Hong-Wang-Zhang:17}
J.~Hong, X.~Wang, and L.~Zhang.
\newblock Numerical analysis on ergodic limit of approximations for stochastic
  {NLS} equation via multi-symplectic scheme.
\newblock {\em SIAM J. Numer. Anal.}, 55(1):305--327, 2017.

\bibitem{Hummer2005PositiondependentDC}
G.~Hummer.
\newblock Position-dependent diffusion coefficients and free energies from
  {B}ayesian analysis of equilibrium and replica molecular dynamics
  simulations.
\newblock {\em New J. of Phys.}, 7:34 -- 34, 2005.

\bibitem{JentzenKloeden:11}
A.~Jentzen and P.~E. Kloeden.
\newblock {\em Taylor approximations for stochastic partial differential
  equations}, volume~83 of {\em CBMS-NSF Regional Conference Series in Applied
  Mathematics}.
\newblock Society for Industrial and Applied Mathematics (SIAM), Philadelphia,
  PA, 2011.

\bibitem{Jiang-Wang}
Y.~Jiang and X.~Wang.
\newblock Uniform-in-time weak error estimates of explicit full-discretization
  schemes for {SPDE}s with non-globally lipschitz coefficients.
\newblock {\em arXiv preprint arXiv:2504.21364}, 2025.

\bibitem{Kruse:14}
R.~Kruse.
\newblock {\em Strong and weak approximation of semilinear stochastic evolution
  equations}, volume 2093 of {\em Lecture Notes in Mathematics}.
\newblock Springer, Cham, 2014.

\bibitem{Laurent21ata}
A.~Laurent.
\newblock {\em Algebraic Tools and Multiscale Methods for the Numerical
  Integration of Stochastic Evolutionary Problems}.
\newblock PhD thesis, University of Geneva, 2021.

\bibitem{Laurent20eab}
A.~Laurent and G.~Vilmart.
\newblock Exotic aromatic {B}-series for the study of long time integrators for
  a class of ergodic {SDE}s.
\newblock {\em Math. Comp.}, 89(321):169--202, 2020.

\bibitem{Lei-Brehier-Gan:25}
Z.~Lei, C.-E. Br\'ehier, and S.~Gan.
\newblock Numerical approximation of the invariant distribution for a class of
  stochastic damped wave equations.
\newblock {\em J. Comput. Math.}, 43(4):976--1015, 2025.

\bibitem{Leimkuhler13rco}
B.~Leimkuhler and C.~Matthews.
\newblock Rational construction of stochastic numerical methods for molecular
  sampling.
\newblock {\em Appl. Math. Res. Express. AMRX}, 2013(1):34--56, 2013.

\bibitem{Leimkuhler14otl}
B.~Leimkuhler, C.~Matthews, and M.~V. Tretyakov.
\newblock On the long-time integration of stochastic gradient systems.
\newblock {\em Proc. R. Soc. Lond. Ser. A Math. Phys. Eng. Sci.},
  470(2170):20140120, 16, 2014.

\bibitem{Lelievre13onr}
T.~Leli\`evre, F.~Nier, and G.~A. Pavliotis.
\newblock Optimal non-reversible linear drift for the convergence to
  equilibrium of a diffusion.
\newblock {\em J. Stat. Phys.}, 152(2):237--274, 2013.

\bibitem{Lelievre_Pavliotis_Robin_Santet_Stoltz_2024}
T.~Leli\`evre, G.~A. Pavliotis, G.~Robin, R.~Santet, and G.~Stoltz.
\newblock Optimizing the diffusion coefficient of overdamped {L}angevin
  dynamics.
\newblock {\em to appear in Math. Comp.}, 2026.
\newblock arXiv:2404.12087 (76 pages).

\bibitem{Lelievre_Santet_Stoltz_2024}
T.~Leli\`evre, R.~R.~Santet, and G.~Stoltz.
\newblock Improving sampling by modifying the effective diffusion.
\newblock {\em Preprint}, 2024.
\newblock arXiv:2410.00525 (33 pages).

\bibitem{Lelievre_Santet_Stoltz_2024B}
T.~Leli\`evre, R.~Santet, and G.~Stoltz.
\newblock Unbiasing {H}amiltonian {M}onte {C}arlo algorithms for a general
  {H}amiltonian function.
\newblock {\em Found. Comput. Math. (published electronically, to appear)},
  2024.
\newblock arXiv:2303.15918, 63 pages.

\bibitem{Liu:25}
Z.~Liu.
\newblock Numerical ergodicity of stochastic {A}llen-{C}ahn equation driven by
  multiplicative white noise.
\newblock {\em Commun. Math. Res.}, 41(1):30--44, 2025.

\bibitem{Lord_Powell_Shardlow:14}
G.~J. Lord, C.~E. Powell, and T.~Shardlow.
\newblock {\em An introduction to computational stochastic {PDE}s}.
\newblock Cambridge Texts in Applied Mathematics. Cambridge University Press,
  New York, 2014.

\bibitem{Milstein04snf}
G.~N. Milstein and M.~V. Tretyakov.
\newblock {\em Stochastic numerics for mathematical physics}.
\newblock Scientific Computation. Springer-Verlag, Berlin, 2004.

\bibitem{ReyBellet_Spiliopoulos_2016}
L.~Rey-Bellet and K.~Spiliopoulos.
\newblock Improving the convergence of reversible samplers.
\newblock {\em J. Stat. Phys.}, 164(3):472--494, 2016.

\bibitem{Roberts_Stramer_2002}
G.~O. Roberts and O.~Stramer.
\newblock Langevin diffusions and {M}etropolis-{H}astings algorithms.
\newblock volume~4, pages 337--357. Methodol. Comput. Appl. Probab., 2002.
\newblock International Workshop in Applied Probability (Caracas, 2002).

\bibitem{Vilmart15pif}
G.~Vilmart.
\newblock Postprocessed integrators for the high order integration of ergodic
  {SDE}s.
\newblock {\em SIAM J. Sci. Comput.}, 37(1):A201--A220, 2015.

\bibitem{Wang-Wu-Huand-Zheng:25}
X.~Wang, S.~Wu, J.~Huang, and J.~Zheng.
\newblock Numerical ergodicity of a full-discrete exponential {E}uler scheme
  for stochastic {B}urgers-{H}uxley equation.
\newblock {\em Math. Methods Appl. Sci.}, 48(4):5202--5234, 2025.

\bibitem{Wang-Cao}
Y.~Wang and W.~Cao.
\newblock Approximation of the invariant measure for stochastic {A}llen-{C}ahn
  equation via an explicit fully discrete scheme.
\newblock {\em arXiv preprint arXiv:2408.00953}, 2024.

\bibitem{Wang-Zhang-Cao:25}
Y.~Wang, S.~Zhang, and W.~Cao.
\newblock Approximation of the invariant measure for stochastic
  {B}urgers-{H}uxley equation via an explicit fully discrete scheme.
\newblock {\em Discrete Contin. Dyn. Syst. Ser. S}, 18(11):3532--3559, 2025.

\bibitem{Wu-Wang-Huang:23}
S.~Wu, X.~Wang, and J.~Huang.
\newblock Numerical ergodicity of two dimensional stochastic {N}avier-{S}tokes
  equations with {G}aussian noise.
\newblock {\em Comput. Math. Appl.}, 141:15--32, 2023.

\end{thebibliography}

\end{document}